\documentclass[12pt]{article}
\usepackage{amsmath}
\usepackage{amsfonts}
\usepackage{amssymb}
\usepackage{graphicx}

\setcounter{MaxMatrixCols}{10}

\newtheorem{theorem}{Theorem}[section]

\newtheorem{corollary}[theorem]{Corollary}

\newtheorem{definition}[theorem]{Definition}

\newtheorem{lemma}[theorem]{Lemma}

\newtheorem{proposition}[theorem]{Proposition}

\newenvironment{proof}[1][Proof]{\noindent\textbf{#1.} }{\ \rule{0.5em}{0.5em}}
\input{tcilatex}

\begin{document}

\title{The dependence on parameters of the inverse functor to the $K$%
--finite functor }
\author{Nolan R. Wallach}
\date{}
\maketitle

\begin{abstract}
An interpretation of the Casselman-Wallach (C-W) Theorem is that the $K$%
--finite functor is an isomorphism of categories from the category of
finitely generated, admissible smooth Fr\'{e}chet modules of moderate growth
to the category of Harish-Chandra modules for a real reductive group, $G$
(here $K$ is a maximal compact subgroup of $G$).In this paper we study the
dependence of this functor on parameters. Our main result implies that
holomorphic dependence implies holomorphic dependence. The work uses results
from the excellent thesis of van der Noort. Also a remarkable family of \
universal Harish-Chandra modules developed in this paper plays a key role.
\end{abstract}

\section*{Introduction}

An interpretation of the Casselman-Wallach (C-W) Theorem is that the $K$%
--finite functor is an isomorphism of categories from the category of
finitely generated, admissible smooth Fr\'{e}chet modules of moderate growth
to the category of Harish-Chandra modules for a real reductive group, $G$
(here $K$ is a maximal compact subgroup of $G$). This variant will be
explained in detail in the next section. Also, the inverse functor, $%
V\rightarrow\overline{V\text{,}}$ to the $K$--finite functor is described
therein. In this paper we study the dependence of this functor on
parameters. Our main result implies that holomorphic dependence implies
holomorphic dependence (see Theorem \ref{main}). This work rests on the
excellent thesis of Vincent van der Noort (\cite{VanderNoort}) which
contains several remarkable theorems including his finiteness theorem that
is given a slightly simplified proof in Appendix \ref{VanderN} for the
benefit of the reader. In addition to the work of van der Noort our
technique involves the study of a class of standard modules in the
Harish-Chandra category with remarkable properties. In particular, they are
free modules for the universal enveloping algebra of a maximal unipotent
subgroup of $G$. Also, every Harish-Chandra module has a resolution by these
modules.

The technical general results not specific to the main results of this paper
are the content of the many appendices to this paper. In particular, several
of the appendices involve the study of continuous families of Hilbert
representations. The first step of the proof of the main theorem is to show
that locally an analytic family of Harish-Chandra modules can be globalized
to a continuous family of (strongly continuous) Hilbert representations.

One can ask if the work of Bernstein and Kr\"{o}tz \cite{BerKro} contains
our main result? To be blunt, it is not clear what exactly they meant by
dependence on parameters. Their proof of the C-W theorem for linear real
reductive groups contains a proof of the automatic continuity theorem that
takes into account the parameters of the principal series . Their proof of
the full C-W theorem is identical in its last stages to the one in \cite%
{RRG1-11} involving an ingenious argument due to Casselman that doesn't take
into account dependence on parameters. In any event, since only the original
C-W theorem is used in this paper their version of the result would serve
equally well as a basis for the proof.

The appendices take up more space than the body of the paper. Hopefully this
separation will help the reader see the flow of the proof of the main
theorem. Among the appendices there are results that are of interest beyond
this paper. For example, Appendix A gives a proof that the $C^{\infty}$
vectors relative to $G$ of a finitely generated, admissible Hilbert
representation are the same as the $C^{\infty}$ vectors relative to $K$ (see
Proposition \ref{C-K}). Also, as mentioned above, Appendix \ref{VanderN} \
contains a proof of an important result of van der Noort.

The proof of the main theorem follows the following lines. First a class of
Harish-Chandra modules is constructed which we call $J$--modules (for lack
of a name) that have the property that every Harish-Chandra module has a
resolution by $J$-modules. It is shown that if one has an analytic family of
Harish-Chandra modules\ref{an-gK} and if $U$ is an open set with compact
closure in the parameter space there is a family of $J$--modules over $U$
mapping surjectively onto the restriction of the family to $U$. The next
step is to locally (in the parameter) globalize a continuous family of $J$%
-modules to a continuous family of Hilbert representations satisfying a
technical condition (smoothable) that implies that the corresponding family
of $C^{\infty}$--vectors defines a continuous family of smooth Fr\'{e}chet
representations of locally uniform moderate growth (in the parameter). The
last stage is to start with a holomorphic family of Harish--Chandra modules
use the Hilbert modules corresponding to the resolving $J$--module to
construct a local (in the parameter) Hilbert globalizations of the the
family satisfying the accessibility condition. \ The fact that the C-W
theorem is an isomorphism of categories shows that the corresponding local
families of smooth Fr\'{e}chet representations \textquotedblleft
glue\textquotedblright\ together and complete the proof of the main theorem.

\section{The C-W Theorem}

Let $G$ be a real reductive group and let $K$ be a maximal compact subgroup
of $G$. Throughout the paper, if $H$ is a Lie group over $\mathbb{R}$ then
its (real) Lie algebra will be denoted $\mathfrak{h}_{o}$ (i.e lower case
fractur $H$ sub-$o$) and its complexification denoted $\mathfrak{h}$. Let $%
\theta$ denote the Cartan involution of $G$ (and of $\mathfrak{g}_{o}$)
corresponding to $K$. Set $\mathfrak{k}_{o}=Lie(K)$, $\mathfrak{t=t}%
_{o}\otimes\mathbb{C}$and $\mathfrak{p}_{o}=\{X\in\mathfrak{g}|\theta X=-X\}$
Fix a symmetric $Ad(G)$--invariant bilinear form, $B$, on $\mathfrak{g}_{o}$
such that $B_{|\mathfrak{k}_{o}}$ is negative definite and $B_{|\mathfrak{p}%
_{o}}$ is positive definite. Let $v_{1},...,v_{n}$ be an orthonormal basis
of $\mathfrak{g}$ with respect to $B$ and set $C=\sum v_{i}^{2}$, the
corresponding Casimir operator). Let $C_{K}$ be the Casimir operator for $%
\mathfrak{k}$ corresponding to $B_{|\mathfrak{k}}$.Let $\mathcal{H}(%
\mathfrak{g},K)$ denote the category of Harish-Chandra modules, that is, the
finitely generated, admissible, $(\mathfrak{g},K)$--modules. The other
category of representations that appear in the C-W theorem is the category $%
\mathcal{HF}(G)$ of admissible finitely generated smooth Fr\'{e}chet
representations of moderate growth. An object in $\mathcal{HF}(G)$ is a pair 
$(\pi,V)$ with $V$ a Fr\'{e}chet space and $\pi$ a homomorphism of $G$ into
the group of continuous bijections of $V$ such the the following 3
conditions are satisfied

1. The map $G\times V\rightarrow V$ given by $g,v\mapsto\pi(g)v$ is
continuous and is $C^{\infty}$ in $G$.

2. Let $\left\Vert ...\right\Vert $ be a norm on $G$ (see Appendix \ref%
{norms}). If $p$ is a continuos seminorm on $V$ then there exists $q$ a
continuous seminorm on $V$ and $r$ such that $p(\pi(g)v)\leq\left\Vert
g\right\Vert ^{r}q(g)$.

3. The representation of $\mathfrak{g}$ , $d\pi$, defines on the $K$--finite
vectors of $V,V_{K}$, an object in $\mathcal{H}(\mathfrak{g},K)$ .

One version of the C-W Theorem is (see \cite{RRG1-11})

\begin{theorem}
If $(\pi,V),(\mu,W)\in\mathcal{HF}(G)$ and $T:(d\pi,V_{K})\rightarrow
(d\sigma,W_{K})$ is a morphism in $\mathcal{H}(\mathfrak{g},K)$ then $T$
extends to a morphism in $\mathcal{HF}(G)$ with closed image that is a
topological summand.
\end{theorem}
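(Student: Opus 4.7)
The plan is to deploy the classical graph trick together with the automatic continuity theorem of Casselman and Wallach. Form the direct sum $U=(\pi\oplus\mu,V\oplus W)\in\mathcal{HF}(G)$, with $K$-finite part $U_K=V_K\oplus W_K$, and let $\Gamma=\{(v,Tv):v\in V_K\}$, a $(\mathfrak{g},K)$-submodule of $U_K$ since $T$ is a morphism in $\mathcal{H}(\mathfrak{g},K)$. Let $\overline{\Gamma}$ denote the closure of $\Gamma$ in the Fr\'echet topology of $U$. Continuity of the $G$-action together with density of $V_K$ (under the $\mathfrak{g}$-action) shows $\overline{\Gamma}$ is $G$-stable, so it is a closed subrepresentation of $U$. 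Using admissibility, each $K$-isotype $U(\tau)$ is finite dimensional, hence closed, so $\overline{\Gamma}\cap U(\tau)=\Gamma\cap U(\tau)$ for every irreducible $\tau\in\widehat{K}$; summing gives $(\overline{\Gamma})_K=\Gamma$, and consequently $\overline{\Gamma}\in\mathcal{HF}(G)$.

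Next I would analyze the two coordinate projections $p_1:\overline{\Gamma}\to V$ and $p_2:\overline{\Gamma}\to W$. On $K$-finite vectors $p_1$ restricts to the $(\mathfrak{g},K)$-isomorphism $(v,Tv)\mapsto v$ of $\Gamma$ onto $V_K$. The key claim is that $p_1$ is a topological isomorphism. Surjectivity of $p_1$ reduces to showing its image is closed, because the image already contains the dense subspace $V_K$. Injectivity of $p_1$ is equivalent to the statement that whenever a sequence $v_n\in V_K$ tends to $0$ in $V$ and $Tv_n$ converges in $W$, the limit is $0$. Both of these assertions are instances of the \emph{automatic continuity} theorem of Casselman--Wallach applied to the $(\mathfrak{g},K)$-map $T$: it forces $T$ to extend to a continuous operator on the Fr\'echet completion with closed graph. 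Granted that, the open mapping theorem for Fr\'echet spaces gives a continuous inverse $p_1^{-1}:V\to\overline{\Gamma}$, and then $\widetilde{T}:=p_2\circ p_1^{-1}:V\to W$ is the desired $G$-equivariant extension of $T$.

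To obtain the statement that $\widetilde{T}(V)$ is closed and a topological direct summand I would invoke the extension result itself twice more, now functorially. Set $V'_K=\ker T$, $W'_K=T(V_K)$, so $V_K/V'_K\cong W'_K$ as Harish--Chandra modules. The inclusion $W'_K\hookrightarrow W_K$ lifts to a continuous $G$-map $\iota:W'\to W$ from a globalization $W'$ of $W'_K$ (using essential surjectivity of the $K$-finite functor, available from the standard C-W statement). Applying the extension result to the $(\mathfrak{g},K)$-isomorphism $V_K/V'_K\cong W'_K$ and to its inverse gives that $W'\cong V/\ker\widetilde{T}$ as SAF representations, so $\iota(W')=\widetilde{T}(V)$ is closed in $W$. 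Finally, dualising the same procedure with the quotient $W_K/W'_K$ yields a continuous $G$-equivariant surjection $W\to W''$ onto a SAF globalization $W''$ of $W_K/W'_K$; because $W',W,W''$ are all admissible with the same $K$-isotype structure as their $K$-finite parts, a standard argument (splitting $K$-type by $K$-type against the $\tau$-isotypic projector in $U(\mathfrak{k})$, followed by estimates coming from condition (2) in the definition of $\mathcal{HF}(G)$) provides a continuous Fr\'echet splitting of $0\to W'\to W\to W''\to 0$, exhibiting $\widetilde{T}(V)\cong W'$ as a topological summand.

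The principal obstacle is the automatic continuity step in the second paragraph: this is the single deep analytic input, since once a continuous extension is known to exist everything else is formal consequences of admissibility, the open mapping theorem, and the essential surjectivity of the $K$-finite functor. The topological splitting in the last paragraph is secondary and relies only on admissibility and the moderate growth condition; it is not an assertion about $G$-equivariant splitting, which would be false in general.
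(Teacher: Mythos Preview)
The paper does not contain a proof of this theorem. It is stated with a reference to \cite{RRG1-11} as ``one version of the C-W Theorem'' and serves as a foundational input for everything that follows; no argument is supplied. There is therefore no proof in the paper to compare your proposal against.

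That said, your proposal is not a proof but a reduction, and at its key step it is circular. The graph construction and the formal manipulations around it are fine, but to show that $p_1:\overline{\Gamma}\to V$ is a bijection you invoke ``the automatic continuity theorem of Casselman--Wallach,'' i.e.\ the assertion that any $(\mathfrak{g},K)$-morphism between the $K$-finite parts of two objects of $\mathcal{HF}(G)$ extends to a continuous map with closed graph. That assertion \emph{is} the substantive content of the theorem you are asked to prove; the closed-image and topological-summand clauses are comparatively formal add-ons once continuity is known. You acknowledge this yourself in the final paragraph. So what you have written correctly isolates automatic continuity as the single deep analytic input and shows how the remaining conclusions follow from it together with admissibility, the open mapping theorem, and the existence of globalizations; but it does not supply that input, and hence does not prove the theorem. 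An actual proof (as in \cite{RRG1-11} or \cite{BerKro}) requires the analytic machinery---asymptotics, embeddings into principal series, growth estimates---that you have bypassed.
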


Let $(\pi,V)\in\mathcal{H}(\mathfrak{g},K)$ then a $K$--invariant Hermitian
inner product on $V,\left\langle ...,...\right\rangle $, will be called $G$%
--integrable if there exists a strongly continuous action, $\sigma$, of $G$
on the Hilbert space completion of $V$ , $H_{\left\langle ,\right\rangle }$,
relative to $\left\langle ...,...\right\rangle ,$ such that the $(\mathfrak{g%
},K)$--module of $K$--finite $C^{\infty}$ vectors, $(d\sigma,$ $\left(
H_{\left\langle ,\right\rangle }\right) _{K}^{\infty}\ )=(\pi,V)$. The
subquotient theorem implies that there exists at least one $G$--integrable
inner product on $V$. Let $\mathcal{I}(\pi,V)$ be the set of integrable $K$%
--invariant inner products on $V$. If $\left\langle ...,...\right\rangle \in%
\mathcal{I}(\pi,V)$ then $\left( H_{\left\langle ....,...\right\rangle
}\right) ^{\infty}\in\mathcal{HF}(G)$.

The theorem implies that if $\left\langle ...,...\right\rangle _{i}\in%
\mathcal{I}(V),i=1,2$ then 
\begin{equation*}
\left( H_{\left\langle ...,...\right\rangle _{1}}\right) ^{\infty}=\left(
H_{\left\langle ...,...\right\rangle _{2}}\right) ^{\infty}. 
\end{equation*}
In particular this implies that the norm $v\mapsto\left\Vert v\right\Vert
_{2}$ is continuous on $\left( H_{\left\langle ...,...\right\rangle
_{1}}\right) ^{\infty}$. Proposition \ref{C-K} implies that there exists $C$
and $l$ such that%
\begin{equation*}
\left\Vert v\right\Vert _{2}\leq\left\Vert
d\sigma_{1}(g)(1+C_{K})^{i}v\right\Vert _{1}. 
\end{equation*}
Note that if $\left\langle ...,...\right\rangle \in\mathcal{I}(V)$ then the $%
K$--invariant inner product $\left\langle v,w\right\rangle _{1}=\left\langle
\pi(1+C_{k})^{l}v,w\right\rangle $ is also in $\mathcal{I}(V)$. This allows
us to define an inverse to the $K$--finite functor. Set%
\begin{equation*}
\overline{V}=\{\{v_{\gamma}\}\in\prod_{\gamma\in\widehat{K}}V(\gamma
)|\sum_{\gamma\in\widehat{K}}\left\langle v_{\gamma},v_{\gamma}\right\rangle
^{2}<\infty,\forall\left\langle ...,...\right\rangle \in\mathcal{I}(V)\}. 
\end{equation*}
Noting (as above) that this space is equal to $\left( H_{\left\langle
,\right\rangle }\right) ^{\infty}$ for any $\left\langle
...,...\right\rangle \in\mathcal{I}(V)$ the space $\overline{V}$ endowed
with the topology given by the norms $\left\{ \left\Vert ...\right\Vert
_{\left\langle ...,..\right\rangle }\right\} _{\left\langle
...,..\right\rangle \in\mathcal{I}(V)}$ is an object in $HF(V)$ with $%
\overline{V}_{K}=V$.

This leads to

\begin{theorem}
The functor $V\rightarrow V_{F}$ from $\mathcal{HF}(G)$ to $\mathcal{H}(%
\mathfrak{g},K)$ is an isomorphism of categories with inverse functor $%
V\rightarrow\overline{V}$.
\end{theorem}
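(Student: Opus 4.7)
The plan is to reduce everything to the previously displayed C--W Theorem plus the elementary fact that $V_K$ is dense in $V$ for every $V\in\mathcal{HF}(G)$ (which follows by convolving with an approximate identity on $K$ applied to the dense subspace of smooth vectors). Given the preparation in the excerpt, the only things that remain to verify are (i) that $V\mapsto\overline{V}$ is a functor, (ii) that the two composite functors are naturally isomorphic to the identity, and (iii) that morphism sets match up bijectively. The excerpt has already established that $\overline{V}\in\mathcal{HF}(G)$ and $\overline{V}_K=V$ for every $V\in\mathcal{H}(\mathfrak{g},K)$, which supplies one direction of (ii) on objects essentially for free.

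For morphisms, I would proceed as follows. A morphism $T\colon (\pi,V)\to(\mu,W)$ in $\mathcal{HF}(G)$ restricts to a morphism $T_F\colon V_K\to W_K$ of Harish-Chandra modules because $T$ intertwines $d\pi$ with $d\mu$ and commutes with $K$. Conversely, given a morphism $S\colon V_K\to W_K$ in $\mathcal{H}(\mathfrak{g},K)$ between the $K$-finite parts of objects $V,W\in\mathcal{HF}(G)$, the stated C--W Theorem produces an extension $\widetilde{S}\colon V\to W$ in $\mathcal{HF}(G)$. This extension is \emph{unique}: any two continuous extensions agree on the dense subspace $V_K$, hence agree everywhere. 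Uniqueness immediately implies that $T\mapsto T_F$ and $S\mapsto\widetilde{S}$ are mutually inverse bijections on morphism sets, and that they respect composition, so $V\mapsto\overline{V}$ is a functor. To define it on morphisms one simply takes $\overline{S}$ to be the unique extension of $S$ viewed as a map between the $K$-finite parts of $\overline{V}$ and $\overline{W}$.

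The remaining claim is that for $(\mu,W)\in\mathcal{HF}(G)$ there is a canonical $\mathcal{HF}(G)$-isomorphism $\overline{W_K}\cong W$. Both sides have $W_K$ as their $K$-finite part. Apply C--W to the identity map $W_K\to W_K$ viewed in both directions: one gets morphisms $\alpha\colon W\to\overline{W_K}$ and $\beta\colon\overline{W_K}\to W$ in $\mathcal{HF}(G)$. The compositions $\beta\alpha$ and $\alpha\beta$ both restrict to the identity on the $K$-finite part, and by the uniqueness argument above they must each equal the identity on the whole Fr\'{e}chet space. This gives the natural isomorphism; combined with the already-established $\overline{V}_K=V$ the two composite functors are naturally isomorphic to the identity, which is what is needed for an isomorphism of categories.

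The only real obstacle is the uniqueness/density point, and it is mild: one needs to know that $V_K$ is dense in $V$ for every $V\in\mathcal{HF}(G)$. This is standard --- the smooth vectors are dense (in fact equal $V$, since $V$ is smooth), and applying a Dirac sequence of $K$-finite functions on $K$ produces a $K$-finite approximation of any given vector. Once this is in hand, the rest is pure formal nonsense built on top of the C--W Theorem.
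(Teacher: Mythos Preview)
Your proposal is correct and follows exactly the line the paper intends: the paper does not give a separate proof of this theorem but simply writes ``This leads to'' after establishing the first C--W theorem and the construction of $\overline{V}$ with $\overline{V}_K=V$; your argument is precisely the formal unpacking of that implication, using density of $V_K$ for uniqueness of extensions and the first C--W theorem for existence. The only quibble is terminological: what you actually prove is an \emph{equivalence} of categories (your $\overline{W_K}\cong W$ is a natural isomorphism, not a literal equality), whereas the paper says ``isomorphism of categories''---but the paper is using the phrase loosely, and if one identifies $W$ with its image in $\prod_\gamma W(\gamma)$ via the $K$-type expansion the two notions coincide here.
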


The rest of this paper will be devoted to the study of the dependence of
this functor on parameters. For this we will use a class of universal
modules with remarkable properties related to ones in \cite{RRG1-11},Section
11.3 and in \cite{HOW}.

\section{The subalgebra \textbf{D} of $Z(\mathfrak{g)}$}

Let $G$ be a real reductive group of inner type. That is, if $\mathfrak{g}%
_{o}=Lie(G)$, $\mathfrak{g}=\mathfrak{g}_{o}\otimes\mathbb{C}$ then $Ad(G)$
is contained in the identity component of $Aut(\mathfrak{g}).$Keep the
notation of the previous section. let $p$ be the projection of $\mathfrak{g}$
onto $\mathfrak{p}=\mathfrak{p}_{o}\otimes\mathbb{C}$ corresponding to $%
\mathfrak{g}_{o}=\mathfrak{k}_{o}\oplus\mathfrak{p}_{o}$. Extend $p$ to a
homomorphism of $S(\mathfrak{g})$ onto $S(\mathfrak{p})$. Then $p$ is the
projection corresponding to%
\begin{equation*}
S(\mathfrak{g})=S(\mathfrak{p})\oplus S(\mathfrak{g})\mathfrak{k}. 
\end{equation*}
In \cite{HOW} we found homogeneous elements $w_{1},...,w_{l}$ of $S(%
\mathfrak{g})^{G}$ with $w_{1}=C$ Satisfying the following two properties:

1. $p(w_{1}),...,p(w_{l})$ are algebraically independent.

2. There exists a finite dimensional homogeneous subspace $E$ of $S(%
\mathfrak{p})^{K}$ such that the map $\mathbb{C[}p(w_{1}),...,p(w_{l})]%
\otimes E\rightarrow S(\mathfrak{p})^{K}$ given by multiplication is an
isomorphism.

If $\mathfrak{g}$ contains no simple ideals of type E (we will list the real
forms of type $E$ that need to be avoided later in this section) one can
take $E=\mathbb{C}1.$ If $\mathfrak{g}$ is split over $\mathbb{R}$ then $%
\mathbb{C}[w_{1},...,w_{l}]=S(\mathfrak{g})^{G},E=\mathbb{C}1$.

Let $\mathcal{H}$ denote the space of harmonic elements of $S(\mathfrak{p})$%
, that is, the orthogonal complement to the ideal $S(\mathfrak{p})\left( S(%
\mathfrak{p})\mathfrak{p}\right) ^{K}$ in $S(\mathfrak{p})$ relative to the
Hermitian extension of inner product $B_{|\mathfrak{p}_{o}}$. Then the
Kostant-Rallis theorem (\cite{KosRal}) implies that the map%
\begin{equation*}
\mathcal{H}\otimes S(\mathfrak{p})^{K}\rightarrow S(\mathfrak{p}) 
\end{equation*}
given by multiplication is a linear bijection. This and 2. easily imply

\begin{lemma}
The map 
\begin{equation*}
\mathcal{H}\otimes E\otimes\mathbb{C}[w_{1},...,w_{l}]\otimes S(\mathfrak{k}%
)\rightarrow S(\mathfrak{g}) 
\end{equation*}
given by multiplication is a linear bijection.
\end{lemma}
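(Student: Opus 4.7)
The plan is to establish the analogous bijection with $p(w_{i})$ in place of $w_{i}$ by combining the Kostant--Rallis theorem with property~2, and then transfer the result to the $w_{i}$ via a filtration by $\mathfrak{p}$-degree on $S(\mathfrak{g})$. Since $\mathfrak{g}=\mathfrak{p}\oplus\mathfrak{k}$ as vector spaces, multiplication yields an algebra isomorphism $S(\mathfrak{g})\cong S(\mathfrak{p})\otimes S(\mathfrak{k})$; under this identification the ideal $S(\mathfrak{g})\mathfrak{k}$ corresponds to $S(\mathfrak{p})\otimes S^{+}(\mathfrak{k})$, where $S^{+}(\mathfrak{k})$ denotes the positive-degree part, and $p$ is the projection onto the first factor. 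Combining this tensor decomposition, the Kostant--Rallis bijection $\mathcal{H}\otimes S(\mathfrak{p})^{K}\to S(\mathfrak{p})$, and property~2, one sees immediately that the ``parallel'' multiplication map
\[
\Psi:\mathcal{H}\otimes E\otimes \mathbb{C}[p(w_{1}),\ldots,p(w_{l})]\otimes S(\mathfrak{k})\longrightarrow S(\mathfrak{g})
\]
is a linear bijection.

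To pass from the $p(w_{i})$ to the $w_{i}$, I would filter $S(\mathfrak{g})$ by $\mathfrak{p}$-degree via $F_{n}S(\mathfrak{g})=S^{\le n}(\mathfrak{p})\cdot S(\mathfrak{k})$, and filter the domain of the multiplication map $\Phi$ of the lemma by assigning $w_{i}$ the degree $d_{i}=\deg w_{i}$, with $\mathcal{H}$ and $E$ carrying their natural gradings and $S(\mathfrak{k})$ placed in degree zero. Writing $w_{i}=p(w_{i})+r_{i}$ with $r_{i}\in S(\mathfrak{g})\mathfrak{k}$, the element $r_{i}$ is homogeneous of total degree $d_{i}$ and has positive $\mathfrak{k}$-degree, hence its $\mathfrak{p}$-degree is at most $d_{i}-1$. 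Thus the $\mathfrak{p}$-leading term of $w_{i}$ is $p(w_{i})$, so $\Phi$ is filtration-preserving and the associated graded $\operatorname{gr}(\Phi)$ agrees with $\Psi$.

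Because $\Psi$ is bijective and the $\mathfrak{p}$-filtrations on both sides are exhaustive and non-negative, a standard induction on $n$ then upgrades bijectivity of $\operatorname{gr}(\Phi)$ to bijectivity of $\Phi$. The main technical point I anticipate is the bookkeeping needed to identify $\operatorname{gr}(\Phi)$ with $\Psi$: one must check that the induced filtration on the subalgebra $\mathbb{C}[w_{1},\ldots,w_{l}]\subset S(\mathfrak{g})$ has associated graded equal to $\mathbb{C}[p(w_{1}),\ldots,p(w_{l})]$. This rests on the algebraic independence of $p(w_{1}),\ldots,p(w_{l})$ given by property~1, which, since $p$ is an algebra homomorphism, also forces algebraic independence of the $w_{i}$ themselves and ensures that $\mathbb{C}[w_{1},\ldots,w_{l}]$ is free of the expected size.
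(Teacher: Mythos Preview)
Your proposal is correct and follows the same route as the paper: combine the Kostant--Rallis bijection $\mathcal{H}\otimes S(\mathfrak{p})^{K}\to S(\mathfrak{p})$, property~2, and the tensor decomposition $S(\mathfrak{g})\cong S(\mathfrak{p})\otimes S(\mathfrak{k})$. The paper simply records that these ingredients ``easily imply'' the lemma, whereas you make explicit the one nontrivial step the paper suppresses, namely the passage from $p(w_{i})$ to $w_{i}$ via the $\mathfrak{p}$-degree filtration; your handling of that step (including the appeal to property~1 for the algebraic independence needed to identify $\operatorname{gr}\mathbb{C}[w_{1},\ldots,w_{l}]$ with $\mathbb{C}[p(w_{1}),\ldots,p(w_{l})]$) is sound.
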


Let $\mathfrak{a}_{o}$ be a maximal abelian subspace of $\mathfrak{p}_{o}$
and let (as usual) 
\begin{equation*}
W=W(\mathfrak{a})=\{s\in GL(\mathfrak{a})|s=Ad(k)_{|\mathfrak{a}},k\in K\}. 
\end{equation*}
Let $h\in\mathfrak{a}_{o}$ be such that $\mathfrak{a}_{o}=\{X\in \mathfrak{p}%
_{o}|[h,X]=0\}$. If $\lambda\in\mathbb{R}$ then set $\mathfrak{g}%
_{o}^{\lambda}=\{X\in\mathfrak{g}_{o}|[h,X]=\lambda X\}$. Set $\mathfrak{n}%
_{o}\mathfrak{=\oplus}_{\lambda>0}\mathfrak{g}_{o}^{\lambda}$ and $\mathfrak{%
\bar{n}}_{o}\mathfrak{=\theta n}_{o}\mathfrak{=\oplus}_{\lambda >0}\mathfrak{%
g}_{o}^{-\lambda}$. Then%
\begin{equation*}
\mathfrak{p}=p(\mathfrak{n)\oplus a}
\end{equation*}
and $p(\mathfrak{n})$ is the orthogonal complement to $\mathfrak{a}$ in $%
\mathfrak{p}$ relative to $B$. Let $q$ be the projection of $\mathfrak{p}$
onto $\mathfrak{a}$ corresponding to this decomposition. Then the Chevalley
restriction theorem implies that 
\begin{equation*}
q:S(\mathfrak{p})^{K}\rightarrow S(\mathfrak{a})^{W}
\end{equation*}
is an isomorphism of algebras. Also, as above, if $H$ is the orthogonal
complement to $\left( S(\mathfrak{a)a}\right) ^{W}S(\mathfrak{a)}$ in $S(%
\mathfrak{a)}$. Then the map%
\begin{equation*}
S(\mathfrak{a})^{W}\otimes H\rightarrow S(\mathfrak{a}) 
\end{equation*}
given by multiplication is a linear bijection. Putting these observations
together the map%
\begin{equation*}
S(\mathfrak{n)\otimes}S(\mathfrak{a})^{W}\otimes H\otimes S(\mathfrak{k}%
)\rightarrow S(\mathfrak{g}) 
\end{equation*}
given by multiplication is a linear bijection. We also note that the map%
\begin{equation*}
\mathbb{C[}w_{1},...,w_{l}]\otimes E\rightarrow S(\mathfrak{a})^{W}
\end{equation*}
given by%
\begin{equation*}
w\otimes e\mapsto q(p(w))q(e) 
\end{equation*}
is a linear bijection. This in turn implies

\begin{lemma}
The map%
\begin{equation*}
S(\mathfrak{n)\otimes}\mathbb{C[}w_{1},...,w_{l}]\otimes E\otimes H\otimes S(%
\mathfrak{k})\rightarrow S(\mathfrak{g}) 
\end{equation*}
given by multiplication is a linear bijection.
\end{lemma}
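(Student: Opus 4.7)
The plan is to compare the multiplication map of the lemma, call it $\Phi$, with the composite $\Psi \circ (\mathrm{id} \otimes \alpha \otimes \mathrm{id} \otimes \mathrm{id})$, where $\Psi : S(\mathfrak{n}) \otimes S(\mathfrak{a})^W \otimes H \otimes S(\mathfrak{k}) \to S(\mathfrak{g})$ is the multiplication bijection of the previous lemma and $\alpha : \mathbb{C}[w_1,\ldots,w_l] \otimes E \to S(\mathfrak{a})^W$ is the linear bijection $w \otimes e \mapsto q(p(w)) q(e)$ recorded just above. This composite is automatically a linear bijection from the source of $\Phi$ onto $S(\mathfrak{g})$, and it differs from $\Phi$ only in that $w$ and $e$ are first fed through $q \circ p$ and $q$ into $S(\mathfrak{a})$, rather than being multiplied directly in $S(\mathfrak{g})$.

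To bridge this gap I would use the decomposition $\mathfrak{g} = \mathfrak{a} \oplus \mathfrak{c}$ with $\mathfrak{c} := \mathfrak{k} \oplus p(\mathfrak{n})$ and the corresponding grading $S(\mathfrak{g}) = \bigoplus_j S^j(\mathfrak{a}) \otimes S(\mathfrak{c})$ by $\mathfrak{a}$-degree. Note that for $X \in \mathfrak{n}$ one has $X = p(X) + (X - p(X)) \in p(\mathfrak{n}) + \mathfrak{k} \subset \mathfrak{c}$, so $\mathfrak{n} \subset \mathfrak{c}$; hence $S(\mathfrak{n})$ and $S(\mathfrak{k})$ sit in $S(\mathfrak{c})$ and contribute $\mathfrak{a}$-degree zero, while $H \subset S(\mathfrak{a})$ contributes its full degree. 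For $u \in \mathbb{C}[w_1,\ldots,w_l]$ homogeneous of degree $d$ the discrepancies $u - p(u) \in S(\mathfrak{g})\mathfrak{k}$ and $p(u) - q(p(u)) \in S(\mathfrak{p})\,p(\mathfrak{n})$ both lie in $S(\mathfrak{g})\mathfrak{c}$, hence have $\mathfrak{a}$-degree strictly less than $d$, so $q(p(u)) \in S^d(\mathfrak{a})$ is exactly the top $\mathfrak{a}$-degree part of $u$; the same argument applied to $e - q(e) \in S(\mathfrak{p})\,p(\mathfrak{n})$ identifies $q(e)$ as the top $\mathfrak{a}$-degree part of $e \in E$. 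Consequently the top $\mathfrak{a}$-degree part of $\Phi(n \otimes u \otimes e \otimes h \otimes k) = n u e h k$ equals $n \cdot q(p(u)) \cdot q(e) \cdot h \cdot k$, which is exactly $(\Psi \circ (\mathrm{id} \otimes \alpha \otimes \mathrm{id} \otimes \mathrm{id}))(n \otimes u \otimes e \otimes h \otimes k)$.

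Since $\Phi$ preserves the total-degree grading on both sides, I would then restrict to a fixed total degree $N$, where both source $V_N$ and target $S^N(\mathfrak{g})$ are finite-dimensional. Filtering $V_N$ by $m := \deg(u) + \deg(e) + \deg(h)$ and $S^N(\mathfrak{g})$ by $\mathfrak{a}$-degree $\le m$, the previous paragraph says $\Phi$ is a filtered map whose associated graded is $\Psi \circ (\mathrm{id} \otimes \alpha \otimes \mathrm{id} \otimes \mathrm{id})$, and the latter is strictly graded and bijective. In finite dimensions this forces $\Phi|_{V_N}$ to be bijective, completing the proof.

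The main obstacle is really the second-paragraph bookkeeping: confirming that both layers of projection (from $\mathfrak{g}$ to $\mathfrak{p}$ and from $\mathfrak{p}$ to $\mathfrak{a}$) only introduce corrections in $S(\mathfrak{g})\mathfrak{c}$, so that the top $\mathfrak{a}$-degree part of $\Phi$ is unambiguously the composition $\Psi \circ (\mathrm{id} \otimes \alpha \otimes \mathrm{id} \otimes \mathrm{id})$. Once this is in place, everything else is linear algebra combined with the two bijections already established.
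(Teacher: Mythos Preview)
Your proposal is correct and is precisely the standard triangular argument that the paper has in mind when it writes ``This in turn implies'' without further detail. The paper records the two ingredients---the multiplication bijection $\Psi: S(\mathfrak{n})\otimes S(\mathfrak{a})^W\otimes H\otimes S(\mathfrak{k})\to S(\mathfrak{g})$ and the bijection $\alpha:\mathbb{C}[w_1,\ldots,w_l]\otimes E\to S(\mathfrak{a})^W$---and leaves the passage to the lemma implicit; your filtration by $\mathfrak{a}$-degree, with associated graded equal to $\Psi\circ(\mathrm{id}\otimes\alpha\otimes\mathrm{id}\otimes\mathrm{id})$, is exactly how one makes that passage rigorous (and is the same mechanism behind the $U(\mathfrak{g})$ version in Theorem~\ref{decompositions}, for which the paper cites \cite{HOW}).

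One small remark on your bookkeeping: the identity $\mathfrak{c}=\mathfrak{k}\oplus p(\mathfrak{n})=\mathfrak{k}\oplus\mathfrak{n}$ (both direct sums, the second by Iwasawa) makes the verification that $q\circ p$ is exactly the projection onto $\mathfrak{a}$ along $\mathfrak{c}$ immediate, so the ``top $\mathfrak{a}$-degree'' identification you describe is on solid ground.
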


Let $\mathrm{symm}$ denote the symmetrization map from $S(\mathfrak{g})$ to $%
U(\mathfrak{g})$ then $\mathrm{symm}$ is a linear bijection and $\mathrm{symm%
}\circ Ad(g)=Ad(g)\circ\mathrm{symm}$. Let $Z(\mathfrak{g})=U(\mathfrak{g}%
)^{G}$ denote the center of $U(\mathfrak{g})$. Set $z_{i}=\mathrm{symm}%
(w_{i})$ and 
\begin{equation*}
\mathbf{D}=\mathbb{C}[z_{1},...,z_{l}]. 
\end{equation*}
Note that if $S_{j}(\mathfrak{g})=\sum_{k\leq j}S^{j}(\mathfrak{g})$ and if $%
U^{j}(\mathfrak{g})\subset U^{j+1}(\mathfrak{g})$ is the standard filtration
of $U(\mathfrak{g}_{\mathbb{C}})$ then 
\begin{equation*}
\mathrm{symm}(S_{j}(\mathfrak{g}))=U^{j}(\mathfrak{g}). 
\end{equation*}
The above and standard arguments (\cite{HOW} Theorem 2.5 and Lemma 5.2) imply

\begin{theorem}
\label{decompositions}Let the notation be as above. Then

1. The map 
\begin{equation*}
\mathcal{H}\otimes E\otimes\mathbf{D}\otimes U(\mathfrak{k})\rightarrow U(%
\mathfrak{g}) 
\end{equation*}
given by%
\begin{equation*}
h\otimes e\otimes D\otimes k\mapsto\mathrm{symm}(h)\mathrm{symm}(e)Dk 
\end{equation*}
is a linear bijection.

2. The map%
\begin{equation*}
U(\mathfrak{n})\otimes E\otimes H\otimes\mathbf{D\otimes U(}\mathfrak{k}%
)\rightarrow U(\mathfrak{g}) 
\end{equation*}
given by%
\begin{equation*}
n\otimes e\otimes h\otimes D\otimes k\mapsto n\mathrm{symm}(e)\mathrm{symm}%
(h)Dk 
\end{equation*}
is a linear bijection.
\end{theorem}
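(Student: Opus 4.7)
The plan is to reduce both statements to the corresponding bijectivity results for $S(\mathfrak{g})$ (the two lemmas immediately preceding the theorem) by passing to the associated graded of the standard PBW filtration, exploiting that $\mathrm{symm}\colon S(\mathfrak{g})\to U(\mathfrak{g})$ is a filtered linear bijection with $\mathrm{symm}(S_{j}(\mathfrak{g}))=U^{j}(\mathfrak{g})$.

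For part~1, I would put the standard filtration $U^{j}(\mathfrak{g})$ on the target. On the source, give $\mathcal{H}$ and $E$ the natural gradings they carry as homogeneous subspaces of $S(\mathfrak{p})$, declare $z_{i}=\mathrm{symm}(w_{i})$ to have degree $\deg w_{i}$ (so $\mathbf{D}$ is graded), and filter $U(\mathfrak{k})$ by its standard filtration; then filter the tensor product by total degree. Because $\mathrm{symm}(S_{j})=U^{j}$, each of $\mathrm{symm}(h)$, $\mathrm{symm}(e)$, $D$, and $k$ lies in the expected step of the filtration, so the multiplication map is filtered. Passing to the associated graded one has $\mathrm{gr}\,U(\mathfrak{g})=S(\mathfrak{g})$, $\mathrm{gr}\,U(\mathfrak{k})=S(\mathfrak{k})$, $\mathrm{gr}(\mathrm{symm}(f))=f$ for homogeneous $f\in S(\mathfrak{g})$, and $\mathrm{gr}(z_{i})=w_{i}$, so the induced map reads
\[
h\otimes e\otimes P(w_{1},\dots,w_{l})\otimes k\;\mapsto\;h\,e\,P(w_{1},\dots,w_{l})\,k\in S(\mathfrak{g}),
\]
which is precisely the map of the first lemma of this section. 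That lemma asserts it is a linear bijection, and the standard fact that a filtered map with exhaustive filtrations is bijective whenever its associated graded is (verified inductively on filtration degree via the short exact sequences of filtration quotients) yields part~1.

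Part~2 is handled by the same scheme: now $U(\mathfrak{n})$ and $U(\mathfrak{k})$ both carry their PBW filtrations, $E$, $H$, and $\mathbf{D}$ are graded as before, and the source is filtered by total degree. Filteredness of the multiplication map is immediate, and on associated graded the map becomes the commutative multiplication
\[
n\otimes e\otimes h\otimes P(w_{1},\dots,w_{l})\otimes k\;\mapsto\;n\,e\,h\,P(w_{1},\dots,w_{l})\,k\in S(\mathfrak{g}),
\]
i.e.\ the map of the second lemma of the section, which is a linear bijection. The only bookkeeping point is verifying that $\mathrm{gr}$ of such a product is genuinely the product of the leading symbols, with no unexpected drop in degree; this is guaranteed by $\mathrm{symm}(S_{j})=U^{j}$ together with the PBW theorem applied to $\mathfrak{n}$ and $\mathfrak{k}$, and I do not anticipate any deeper obstacle---the real content has already been absorbed into the two preceding lemmas.
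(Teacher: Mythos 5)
Your filtered/associated-graded argument is exactly the ``standard arguments'' the paper invokes (and attributes to \cite{HOW}, Theorem 2.5 and Lemma 5.2) without spelling them out; passing to $\mathrm{gr}$, identifying $\mathrm{gr}\,U(\mathfrak{g})$ with $S(\mathfrak{g})$ via PBW, and reducing to the two symmetric-algebra lemmas is precisely the intended route. The one place worth tightening is the declaration that $\mathbf{D}=\mathbb{C}[z_1,\dots,z_l]$ is graded with $\deg z_i=\deg w_i$: a priori this presumes the $z_i$ are algebraically independent, which is cleanest to establish by running your argument with a free polynomial algebra $\mathbb{C}[t_1,\dots,t_l]$ mapping onto $\mathbf{D}$ and observing that injectivity of the resulting bijection forces the map onto $\mathbf{D}$ to be an isomorphism.
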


\section{A class of admissible finitely generated $(\mathfrak{g},K)$--modules%
}

Retain the notation in the preceding section. Note that Theorem \ref%
{decompositions} implies that the subalgebra $\mathbf{D}U(\mathfrak{k})$of $%
U(\mathfrak{g})$ is isomorphic with the tensor product algebra $\mathbf{D}%
\otimes U(\mathfrak{k})$ and that $U(\mathfrak{g})$ is free as a right $%
\mathbf{D}U(\mathfrak{k})$--module under multiplication. If $R$ is a $%
\mathbf{D}U(\mathfrak{k})$--module then set 
\begin{equation*}
J(R)=U(\mathfrak{g})\otimes_{\mathbf{D}U(\mathfrak{k})}R. 
\end{equation*}

Denote by $\mathcal{H}(\mathfrak{g},K)$ the Harish--Chandra category of
admissible finitely generated $(\mathfrak{g},K)$--modules. Let $R$ be a
finite dimensional continuous $K$--module that is also a $\mathbf{D}$%
--module and the actions commute then $K$ acts on $J(R)$ as follows:%
\begin{equation*}
k\cdot\left( g\otimes r\right) =Ad(k)g\otimes kr,k\in K,g\in U(\mathfrak{g}%
),r\in R. 
\end{equation*}
Then as a $K$--module 
\begin{equation*}
J(R)\cong\mathcal{H}\otimes E\otimes R 
\end{equation*}
with $K$ acting trivially on $E$. Note that $J(R)\in\mathcal{H}(\mathfrak{g}%
,K)$ since the multiplicities of $K$--types in $\mathcal{H}$ are finite and $%
J(R)$ is clearly finitely generated as a $U(\mathfrak{g})$--module. Let $W(%
\mathbf{D,}K)$ be the category of finite dimensional $(\mathbf{D},K)$%
--modules with $K$ acting continuously and the action of $\mathbf{D}$ and $K$
commute.

\begin{lemma}
$R\rightarrow J(R)$ defines an exact faithful functor from the category $W(K,%
\mathbf{D})$ to $\mathcal{H}(\mathfrak{g},K)$.
\end{lemma}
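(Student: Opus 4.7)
The plan is to reduce the lemma entirely to the structural content of Theorem \ref{decompositions}. Functoriality is routine: a $(\mathbf{D},K)$-morphism $f\colon R\to R'$ yields $\mathrm{id}\otimes f\colon U(\mathfrak{g})\otimes R\to U(\mathfrak{g})\otimes R'$, and the assumption that $f$ intertwines both $\mathbf{D}$ and $U(\mathfrak{k})$ ensures this map descends to a well-defined $U(\mathfrak{g})$-linear map $J(f)\colon J(R)\to J(R')$; a direct verification using $k\cdot(g\otimes r)=\mathrm{Ad}(k)g\otimes kr$ then shows $J(f)$ is $(\mathfrak{g},K)$-equivariant. The only mildly delicate point in setting this up is that the $K$-action is well defined on the relative tensor product; I would check this explicitly, using that $\mathrm{Ad}(K)$ preserves $\mathbf{D}\subset Z(\mathfrak{g})$ and normalizes $U(\mathfrak{k})$, so the defining tensor relations $gDk\otimes r-g\otimes Dkr$ are sent by $k'\cdot$ to relations of the same form.

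For exactness, I would appeal directly to Theorem \ref{decompositions}(1): the multiplication map $(\mathcal{H}\otimes E)\otimes\mathbf{D}\otimes U(\mathfrak{k})\to U(\mathfrak{g})$ is a linear bijection. Read as an isomorphism of right $\mathbf{D}U(\mathfrak{k})$-modules, this says $U(\mathfrak{g})$ is a free right $\mathbf{D}U(\mathfrak{k})$-module on the basis $\mathrm{symm}(\mathcal{H}\otimes E)$; it is therefore flat, so $J=U(\mathfrak{g})\otimes_{\mathbf{D}U(\mathfrak{k})}(-)$ preserves exact sequences of underlying vector spaces. Since exactness in both $W(\mathbf{D},K)$ and $\mathcal{H}(\mathfrak{g},K)$ is detected at the level of underlying linear maps, this gives exactness of the functor.

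For faithfulness, I would use the canonical map $\iota_R\colon R\to J(R)$, $r\mapsto 1\otimes r$. Under the $K$-module identification $J(R)\cong\mathcal{H}\otimes E\otimes R$ already recorded before the lemma, $\iota_R(r)$ corresponds to $1\otimes 1\otimes r$, so $\iota_R$ is injective. Naturality $J(f)\circ\iota_R=\iota_{R'}\circ f$ then converts $J(f)=0$ into $f=0$, which is exactly what faithfulness requires. The \emph{main obstacle} is not really an obstacle but a bookkeeping check, namely the well-definedness of the $K$-action on $J(R)$ noted above; everything else is a formal consequence of the freeness statement furnished by Theorem \ref{decompositions}.
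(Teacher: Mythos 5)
Your proposal is correct and follows essentially the same route as the paper: the paper's proof is a one-liner citing that $U(\mathfrak{g})$ is a free right $\mathbf{D}U(\mathfrak{k})$-module (via Theorem \ref{decompositions}), and you simply spell out why freeness delivers both exactness (freeness $\Rightarrow$ flatness $\Rightarrow$ exactness on underlying vector spaces) and faithfulness (the basis element $1\otimes 1 \in \mathcal{H}\otimes E$ gives a natural injection $R\hookrightarrow J(R)$). The added well-definedness check for the $K$-action and the explicit naturality diagram are sound bookkeeping but not a different argument.
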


\begin{proof}
This follows since $U(\mathfrak{g})$ is free as a module for $\mathbf{D}U(%
\mathfrak{k})$ under right multiplication.
\end{proof}

As usual, denote the set of equivalence classes of irreducible, finite
dimensional, continuous representations of $K$ by $\hat{K}$. If $V\in 
\mathcal{H}(\mathfrak{g},K)$ set $V(\gamma)$ equal to the sum of all
irreducible $K$--subrepresentations of $V$ in the class of $\gamma$. Then $%
V(\gamma)$ is invariant under the action of $Z(\mathfrak{g})$ hence under
the action of $\mathbf{D}$.

By definition if $V\in\mathcal{H}(\mathfrak{g},K)$ there is a finite subset $%
F\subset\hat{K}$ such that 
\begin{equation*}
U(\mathfrak{g}_{\mathbb{C}})\sum_{\gamma\in F}V(\gamma). 
\end{equation*}
Set $R=\sum_{\gamma\in F}V(\gamma)\in W(\mathbf{D,}K)$. One has a canonical $%
(\mathfrak{g},K)$--module surjection $J(R)\rightarrow V$ given by $g\otimes
r\mapsto gr.$ A submodule of an element of $\mathcal{H}(\mathfrak{g},K)$ is
in $\mathcal{H}(\mathfrak{g},K)$ so

\begin{proposition}
If $V\in\mathcal{H}(\mathfrak{g},K)$ then there exists a sequence of
elements $R_{j}\in W(\mathfrak{g},K)$ and an exact sequence in $\mathcal{H}(%
\mathfrak{g},K)$%
\begin{equation*}
...\rightarrow J(R_{k})\rightarrow....\rightarrow J(R_{2})\rightarrow
J(R_{1})\rightarrow J(R_{0})\rightarrow V\rightarrow0. 
\end{equation*}
\end{proposition}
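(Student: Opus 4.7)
The plan is to build the resolution inductively by repeatedly invoking the surjection construction $J(R)\twoheadrightarrow V$ developed in the paragraph just above the proposition. Three already-established ingredients drive the argument: (i) for any $V\in\mathcal{H}(\mathfrak{g},K)$ there is a finite $F\subset\widehat{K}$ with $V = U(\mathfrak{g})\sum_{\gamma\in F}V(\gamma)$; (ii) each $V(\gamma)$ is $Z(\mathfrak{g})$-stable, hence $\mathbf{D}$-stable, and finite-dimensional by admissibility, so $\sum_{\gamma\in F}V(\gamma)\in W(\mathbf{D},K)$; and (iii) a submodule of an object of $\mathcal{H}(\mathfrak{g},K)$ is again in $\mathcal{H}(\mathfrak{g},K)$, as recalled just before the proposition.

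First I would set $V_{0}=V$, pick a finite $F_{0}\subset\widehat{K}$ generating $V_{0}$ in the above sense, and put $R_{0}=\sum_{\gamma\in F_{0}}V_{0}(\gamma)\in W(\mathbf{D},K)$. The canonical map $\varepsilon\colon J(R_{0})\to V_{0}$ given by $g\otimes r\mapsto gr$ is then a surjection in $\mathcal{H}(\mathfrak{g},K)$ and furnishes the rightmost arrow of the resolution. Next I would set $V_{1}=\ker\varepsilon$; by (iii) this lies in $\mathcal{H}(\mathfrak{g},K)$, so applying (i)--(ii) again produces $R_{1}\in W(\mathbf{D},K)$ together with a surjection $\psi_{1}\colon J(R_{1})\twoheadrightarrow V_{1}$. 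Composing with the inclusion $V_{1}\hookrightarrow J(R_{0})$ gives a map $d_{1}\colon J(R_{1})\to J(R_{0})$ whose image is precisely $\ker\varepsilon$.

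Iterating: at step $k\geq 1$, once $d_{k}\colon J(R_{k})\to J(R_{k-1})$ is built, I would set $V_{k+1}:=\ker d_{k}$, which by (iii) is an object of $\mathcal{H}(\mathfrak{g},K)$; then produce $R_{k+1}\in W(\mathbf{D},K)$ and a surjection $\psi_{k+1}\colon J(R_{k+1})\twoheadrightarrow V_{k+1}$, and define $d_{k+1}$ to be the composition $J(R_{k+1})\twoheadrightarrow V_{k+1}\hookrightarrow J(R_{k})$. By construction $\operatorname{image}(d_{k+1}) = V_{k+1} = \ker d_{k}$, so the resulting chain is exact at every $J(R_{k})$ as well as at $V$.

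The only point that really requires care is that each successive kernel stays in $\mathcal{H}(\mathfrak{g},K)$, i.e.\ remains finitely generated as a $(\mathfrak{g},K)$-module; this is the hereditary property (iii), which is taken as given in the excerpt (it rests on the Noetherianity of the Harish-Chandra category). Once that is in hand, the construction is purely formal. I would also flag the apparent misprint "$R_{j}\in W(\mathfrak{g},K)$" in the statement: the modules produced here naturally live in $W(\mathbf{D},K)$, the category on which the functor $J$ is defined.
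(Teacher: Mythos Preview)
Your proposal is correct and follows exactly the approach the paper intends: the paragraph immediately preceding the proposition establishes the canonical surjection $J(R)\twoheadrightarrow V$ and records that submodules of Harish-Chandra modules are again Harish-Chandra modules, and the proposition is then stated as the evident inductive consequence, which you have spelled out in full. Your observation that ``$R_{j}\in W(\mathfrak{g},K)$'' is a misprint for $W(\mathbf{D},K)$ is also correct.
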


Notice that this exact sequence us a free resolution of $V$ as a $U(%
\mathfrak{n})$--module.

Let $\beta:\mathbf{D}\rightarrow\mathbb{C}$ be an algebra homomorphism. Let $%
\mathcal{H}(\mathfrak{g},K)_{\beta}$ be the full subcategory of $\mathcal{H}(%
\mathfrak{g},K)$ consisting of modules $V$ such that if $z\in\mathbf{D}$
then it acts by $\beta(z)I$. The next result is an aside that will not be
used in the rest of this paper and is a simple consequence of the definition
of projective object.

\begin{lemma}
Let $F$ be a finite dimensional $K$--module and let $\mathbf{D}$ act on $F$
by $\beta(z)I$ yielding an object $R\in W(K,\mathbf{D})$. Then $J(R)$ is
projective in $\mathcal{H}(\mathfrak{g},K)_{\beta}$.
\end{lemma}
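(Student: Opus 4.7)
The plan is to exhibit $J(R)$ as projective by showing that $\mathrm{Hom}_{(\mathfrak{g},K)}(J(R),-)$ is an exact functor on $\mathcal{H}(\mathfrak{g},K)_\beta$, and the natural tool is tensor-hom adjunction for the inclusion $\mathbf{D}U(\mathfrak{k})\hookrightarrow U(\mathfrak{g})$.

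First I would note that, since $U(\mathfrak{g})$ is free (hence flat) as a right $\mathbf{D}U(\mathfrak{k})$-module by Theorem \ref{decompositions}, the functor $J = U(\mathfrak{g})\otimes_{\mathbf{D}U(\mathfrak{k})}(-)$ is left adjoint to the restriction functor from $(\mathfrak{g},K)$-modules to $(\mathbf{D}U(\mathfrak{k}),K)$-modules. Concretely, for any $V\in\mathcal{H}(\mathfrak{g},K)_\beta$, evaluation at $1\otimes r$ gives a natural isomorphism
\begin{equation*}
\mathrm{Hom}_{(\mathfrak{g},K)}(J(R),V)\;\xrightarrow{\;\sim\;}\;\mathrm{Hom}_{(\mathbf{D}U(\mathfrak{k}),K)}(R,V),
\end{equation*}
where on the right we require $K$-equivariance together with $\mathbf{D}$- and $U(\mathfrak{k})$-linearity (these being compatible via the $K$-action and its derivative, so the latter is automatic).

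Next I would use that both $R$ and $V$ lie in $\mathcal{H}(\mathfrak{g},K)_\beta$, so $\mathbf{D}$ acts by the same scalar character $\beta$ on both sides. Consequently any $K$-linear map $F\to V$ is automatically $\mathbf{D}$-equivariant, and the previous Hom reduces to
\begin{equation*}
\mathrm{Hom}_{(\mathfrak{g},K)}(J(R),V)\;\cong\;\mathrm{Hom}_K(F,V).
\end{equation*}
Since $K$ is compact, the category of continuous $K$-representations is semisimple, so $\mathrm{Hom}_K(F,-)$ is an exact functor on $K$-modules; restricting along the forgetful functor $\mathcal{H}(\mathfrak{g},K)_\beta\to\{K\text{-modules}\}$ (which preserves short exact sequences), the composite $V\mapsto \mathrm{Hom}_K(F,V)$ is exact on $\mathcal{H}(\mathfrak{g},K)_\beta$.

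Combining these two isomorphisms, $\mathrm{Hom}_{(\mathfrak{g},K)}(J(R),-)$ is exact on $\mathcal{H}(\mathfrak{g},K)_\beta$, which is the definition of $J(R)$ being projective in that category. There is no real obstacle here: the only subtlety is checking that the infinitesimal character condition makes $\mathbf{D}$-linearity disappear from the adjunction, which is immediate because $\mathbf{D}$ acts by the same scalar on source and target.
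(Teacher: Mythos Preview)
Your proof is correct and is exactly the argument the paper has in mind: the paper omits the proof entirely, remarking only that the lemma ``is a simple consequence of the definition of projective object,'' and your tensor--hom adjunction reducing $\mathrm{Hom}_{(\mathfrak{g},K)}(J(R),V)$ to $\mathrm{Hom}_K(F,V)$ is the standard way to unpack that remark.
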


\section{The objects in $W(K,\mathbf{D)}$}

If $R\in W(K,\mathbf{D)}$ then $R$ has a $K$--isotypic decomposition $%
R=\oplus_{\gamma\in\hat{K}}R(\gamma)$. Only a finite number of the $R(\gamma)
$ are non-zero. If $D\in\mathbf{D\ }$then $DR(\gamma)\subset R(\gamma)$ for
all $\gamma\in\hat{K}$. If $\chi:\mathbf{D}\rightarrow \mathbb{C}$ is an
algebra homomorphism then we set $R_{\chi}=\{v\in R|(D-\chi(D))^{k}v=0,$for
some $k>0\}$ \ Then setting $ch(\mathbf{D})$ equal to the set of all algebra
homomorphisms of $\mathbf{D}$ to $\mathbb{C}$ we have the decomposition%
\begin{equation*}
R=\bigoplus_{\gamma\in\hat{K},\chi\in ch(\mathbf{D})}R_{\chi}(\gamma). 
\end{equation*}
Fix a $K$--module $(\tau_{\gamma},F_{\gamma})\in\gamma$. Then $%
R_{\chi}(\gamma)$ is isomorphic with 
\begin{equation*}
\mathrm{Hom}_{K}(V_{\gamma},R_{\chi})\otimes F_{\gamma}
\end{equation*}
with $K$ acting on $F_{\gamma}$ and $\mathbf{D}$ acting on $\mathrm{Hom}%
_{K}(V_{\gamma},R)$.

If $R$ is an irreducible object in $W(K,\mathbf{D)}$ then Schur's lemma
implies that $\mathbf{D}$ acts by a single homomorphism to $\mathbb{C}$ and $%
R$ is irreducible as a $K$--module. Set $V_{\gamma,\chi,}$ equal to the
module with $\mathbf{D}$ acting by $\chi$ and $K$ acting by an element of $%
\gamma$.

We next analyze the homomorphisms $\chi$. Let $\chi$ be such a homomorphism
then $\chi(z_{i})=\lambda_{i}\in\mathbb{C}$. Thus one simple parametrization
is by $(\lambda_{1}....,\lambda_{l})\in\mathbb{C}^{l}$. We use the notation $%
\beta_{\lambda}$ for the homomorphism such that $\beta_{\lambda}(z_{i})=%
\lambda_{i}$.

\begin{definition}
Let $X$ be an analytic manifold. An analytic family in $W(K,\mathbf{D)}$
over $X$ is a pair $(\mu,V)$ of a a finite dimensional continuous $K$%
--module,$V$, and a $\mu:X\times\mathbf{D}\rightarrow\mathrm{End}(V)$ such
that $D\mapsto \mu(x,D)$ is a representation of $\mathbf{D}$ on $V$ and $%
x\mapsto\mu(x,D)$ is analytic for all $D\in\mathbf{D}$.
\end{definition}

\section{Parabolically induced families}

Let $A$ and $N$ be the connected subgroups of $G$ with $Lie(A)=\mathfrak{a}%
_{o}$ and $Lie(N)=\mathfrak{n}_{o}$. Let $M$ be the centralizer of $%
\mathfrak{a}$ in $K$. Set $Q=MAN$ then $Q$ is a minimal parabolic subgroup
of $G$.

\begin{definition}
An analytic family of finite dimensional $Q$--modules over a real analytic
manifold $X$ is a pair $(\sigma,S)$ with $S$ a finite dimensional continuous 
$M$--module and a real analytic map $\sigma:X\times Q\rightarrow GL(S)$ such
that $x\mapsto\sigma(x,q)$ is analytic and $\sigma(x,\cdot)=\sigma_{x}$ is a
representation of $Q$.
\end{definition}

Let $(\sigma,S)$ be a continuous finite dimensional representation of $Q$.
Set $I^{\infty}(\sigma_{|M})$ equal to the space of all smooth functions $%
f:K\rightarrow S$ satisfying $f(mk)=\sigma(m)f(k)$. Define and action $%
\pi_{\sigma}$ of $G$ on $I^{\infty}(\sigma_{|M})$ as follows: if $f\in
I^{\infty}(\sigma_{|M})$ then extend $f$ to $G$ by $f_{\sigma}(qk)=\sigma
(q)f(k)$, then, since $K\cap Q=M$ and $QK=G$, $f_{\sigma}$ is $C^{\infty}$
on $G$ set $\pi_{\sigma}(g)f(k)=f_{\sigma}(kg)$. Also set%
\begin{equation*}
\pi_{\sigma}(Y)f(k)=\frac{d}{dt}f_{\sigma}(k\exp tY)_{|t=0}
\end{equation*}
for $Y\in\mathfrak{g}$ and $k\in K,f\in I^{\infty}(\sigma_{|M})$. Let $%
I(\sigma_{|M})$ be the space of all right $K$ finite elements of $I^{\infty
}(\sigma_{|M})$

Put and $M$--invariant inner product, $\left\langle ...,...\right\rangle $
on $S$. If $f,h\in I^{\infty}(\sigma_{|M})$ then set 
\begin{equation*}
(f,h)=\int_{K}\left\langle f(k),h(k)\right\rangle dk 
\end{equation*}
with $dk$ normalized invariant measure on $K$. The following is standard.

\begin{proposition}
Let $(\sigma,S)$ be an analytic family of finite dimensional representations
of $Q$ over the analytic manifold $X$. Set $\lambda(x,y)=\pi_{\sigma_{x}}(y)$
for $x\in X,y\in U(\mathfrak{g}_{\mathbb{C}})$. If $\mu$ is the common value
of $\sigma_{x}|_{M}$, then $(\lambda,I(\mu))$ is an analytic family (see
Appendix \ref{an-gK}) of objects in $\mathcal{H}(\mathfrak{g},K)$ over $X$.
\end{proposition}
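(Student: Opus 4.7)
The plan is to decouple what varies in $x$ from what is fixed. The underlying topological $K$-module $I^{\infty}(\mu)$ depends only on $\sigma_{x}|_{M} = \mu$, and the right-translation action of $K$ on $I^{\infty}(\mu)$ does not involve $x$. In particular the $K$-finite vectors $I(\mu)$, together with the isotypic decomposition $I(\mu) = \bigoplus_{\gamma \in \hat{K}} I(\mu)(\gamma)$, form a fixed $K$-module, with each $I(\mu)(\gamma)$ finite-dimensional of dimension $[\mu:\gamma|_{M}] \cdot \dim F_{\gamma}$ by Frobenius reciprocity. So the entire content of the proposition is (i) that for each individual $x$ the resulting $(\mathfrak{g},K)$-module is admissible and finitely generated, which is the classical statement about minimal principal series, and (ii) that the $\mathfrak{g}$-action on the fixed $K$-module $I(\mu)$ depends analytically on $x$ in whichever sense is adopted in Appendix \ref{an-gK}.

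For (ii) I would compute the $\mathfrak{g}$-action explicitly. Fix $Y \in \mathfrak{g}_{o}$ and $k \in K$. Using $k \exp(tY) = \exp(t\,\mathrm{Ad}(k)Y)\,k$ together with the Iwasawa decomposition $\mathfrak{g}_{o} = \mathfrak{n}_{o} \oplus \mathfrak{a}_{o} \oplus \mathfrak{k}_{o}$, write $\mathrm{Ad}(k)Y = N(k,Y) + A(k,Y) + K(k,Y)$ with summands in $\mathfrak{n}_{o}, \mathfrak{a}_{o}, \mathfrak{k}_{o}$; these are real-analytic in $k$ and independent of $x$. Factoring $\exp(t\,\mathrm{Ad}(k)Y) = n(t)a(t)\kappa(t)$ for $t$ near $0$ and differentiating $f_{\sigma_{x}}(n(t)a(t)\kappa(t)k) = \sigma_{x}(n(t)a(t))\,f(\kappa(t)k)$ at $t=0$ gives
\begin{equation*}
\lambda(x,Y) f(k) \;=\; d\sigma_{x}\bigl(N(k,Y) + A(k,Y)\bigr)\,f(k) \;+\; \frac{d}{dt}f(\kappa(t)k)\big|_{t=0}.
\end{equation*}
The second summand is a first-order linear operator on $f$ whose coefficients depend analytically on $k$ but not on $x$. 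The first summand is the value at $x$ of the analytic family $x \mapsto d\sigma_{x}(Z) \in \mathrm{End}(S)$, $Z \in \mathfrak{q}$, evaluated at the $x$-independent analytic $\mathfrak{q}$-valued function $N(k,Y) + A(k,Y)$ of $k$, and then applied pointwise to $f(k)$. Since $\sigma$ is an analytic family of $Q$-representations, $x \mapsto d\sigma_{x}(Z)$ is analytic for each fixed $Z$. Fixing a basis of the finite-dimensional, $x$-independent component $I(\mu)(\gamma)$, the matrix entries of $\lambda(x,Y)$ are therefore analytic in $x$.

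The extension to arbitrary $y \in U(\mathfrak{g})$ proceeds by induction on the degree of $y$: at each step applying one more $Y \in \mathfrak{g}$ sends a vector supported in a finite set of $K$-isotypic components to one supported in a slightly larger (but still finite and $x$-independent) set of isotypic components, with the formula above showing that matrix entries acquire at most one additional factor $d\sigma_{x}(Z)$, analytic in $x$, together with further $x$-independent differential data on $K$. This delivers the parameter dependence. The main obstacle is bookkeeping rather than substance: one must match the explicit formula above with whichever precise definition of an analytic family is adopted in Appendix \ref{an-gK} (in particular verifying that the finite-order $K$-type shifts induced by $y$, and the order of derivatives of $\sigma_x$ that appear, behave compatibly with that definition). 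Once that matching is in place, admissibility and finite generation fiber-by-fiber are the standard facts about minimal principal series, and nothing further beyond the analyticity of $\sigma$ is required.
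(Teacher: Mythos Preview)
Your proposal is correct and amounts to a detailed unpacking of the paper's one-line proof. The paper simply asserts that the matrix coefficients $(x,g)\mapsto(\pi_{\sigma_x}(g)f,h)$ are real analytic for $f,h\in I(\mu)$, which is ``standard''; your explicit Iwasawa-based formula for $\lambda(x,Y)$ is precisely the derivative of that group-level statement at $g=e$, and your induction to higher filtration in $U(\mathfrak{g})$ is the step the paper leaves implicit.
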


\begin{proof}
It is standard that 
\begin{equation*}
x,g\mapsto(\pi_{\sigma_{x}}(g)f,h) 
\end{equation*}
is real analytic and holomorphic in $x$ for $f,h\in I(\mu)$.
\end{proof}

\begin{proposition}
\label{Par-access}Let $(\sigma,S)$ be an analytic family of $Q$--modules
based on $Z.$Set $\sigma(m)$ equal to the common value of $\sigma_{z}(m)$
for $m\in M$ and $H$ equal to the unitarily induced representation of $\sigma
$ from $M$ to $K.$ Then $z\rightarrow(\pi_{\sigma_{z}},H)$ is a continuous
family of Hilbert representations over $Z$ (see Definition \ref{Hilbert-Fam}%
) that is smoothable in the sense of Definition \ref{smoothable}.
\end{proposition}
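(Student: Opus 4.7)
The plan is to work in the compact picture. Since $\sigma_z|_M$ is independent of $z$, the underlying Hilbert space $H$ (consisting of square-integrable $f : K \to S$ with $f(mk) = \sigma(m)f(k)$ a.e., and inner product $\int_K \langle f(k), h(k)\rangle\, dk$) is independent of $z$. For each $g \in G$ the decomposition $G = QK$ with $Q \cap K = M$ gives real-analytic maps $(k,g) \mapsto q(k,g) \in Q$ and $(k,g) \mapsto \kappa(k,g) \in K$ with $kg = q(k,g)\kappa(k,g)$ (the $M$-ambiguity is absorbed by $M$-equivariance), and the induced action reads
\[
\pi_{\sigma_z}(g)f(k) \;=\; \sigma_z(q(k,g))\, f(\kappa(k,g)).
\]

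First I would verify continuity of the family. Fix $g \in G$ and $f$ in the dense subspace $C(K,S)^M \subset H$; the displayed formula shows that $z \mapsto \pi_{\sigma_z}(g)f$ is continuous (in fact analytic) from $Z$ to $H$, since $\sigma_z$ is analytic in $z$, the Iwasawa coordinates are real-analytic in $g$ and continuous in $k$, and $k$ ranges over the compact set $K$. To pass to arbitrary $f \in H$ I would use that $\|\pi_{\sigma_z}(g)\|_{\mathrm{op}} \le \sup_{k \in K} \|\sigma_z(q(k,g))\|$, which is locally uniformly bounded in $(z,g)$, so an $\varepsilon/3$ density argument upgrades pointwise continuity on $C(K,S)^M$ to strong continuity on $H$. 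Joint strong continuity in $(z,g)$ combined with the representation property of each $\pi_{\sigma_z}$ yields the continuous family of Hilbert representations in the sense of Definition \ref{Hilbert-Fam}.

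Next I would verify smoothability. The space of $K$-smooth vectors in $H$ is $C^\infty(K,S)^M$ with its intrinsic Fr\'echet topology, which is manifestly independent of $z$. By Proposition \ref{C-K} this coincides with the space of $\pi_{\sigma_z}$-smooth vectors for every $z \in Z$, so all of the smooth Fr\'echet representations sit on a single Fr\'echet space. Differentiating the formula above in $g$ shows that the action of any $Y \in U(\mathfrak{g})$ is a differential operator in $f$ whose coefficients are real-analytic in $k$ and polynomial in matrix entries of $\sigma_z$, hence analytic in $z$. The moderate-growth seminorm estimate inherits $z$-locally uniform constants from the bound $\|\sigma_z(q(k,g))\| \le C(z)\|g\|^r$ taken uniform over compact subsets of $Z$.

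The main obstacle I anticipate is the bookkeeping needed to match the precise axioms of Definition \ref{smoothable}: one must exhibit a single, explicitly controlled family of defining seminorms on $C^\infty(K,S)^M$ that realizes it simultaneously as the smooth-vector space of every $\pi_{\sigma_z}$, with locally uniform continuity of the $G$-action in the parameter $z$. All of this reduces to routine estimates once the compact-picture formula for $\pi_{\sigma_z}(g)$ is in hand, since every piece of $z$-dependence factors through the analytic family $\sigma_z$ evaluated on the real-analytic Iwasawa coordinates.
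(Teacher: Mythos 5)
Your proposal is correct and follows essentially the same route as the paper's proof: both compute $d\pi_{\sigma_z}(u)$ in the compact picture via the Iwasawa decomposition, exhibit it as a $K$-differential operator of bounded order whose coefficients involve $d\sigma_z$ evaluated on a bounded set of elements of $U(\mathfrak{n}\oplus\mathfrak{a})$ together with real-analytic functions of $k$, and bound these coefficients uniformly for $z$ in a compact set to obtain the estimate against $\|(1+C_K)^l f\|$. The paper writes the twist more explicitly as $Ad(k^{-1})u^T$ expanded in a $U(\mathfrak{n})\otimes U(\mathfrak{a})\otimes U(\mathfrak{k})$ basis and then converts to right $K$-derivatives $Y^J$, while you phrase it through the Iwasawa projection maps $q(k,g),\kappa(k,g)$ and differentiation at $g=e$; the paper does not reprove the strong continuity of the Hilbert family (treated as standard), which you also address, so your write-up is if anything slightly more complete in scope though less explicit in the final Sobolev-type bookkeeping.
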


\begin{proof}
\begin{equation*}
f(mk)=\sigma(m)f(k),m\in M,k\in K. 
\end{equation*}
Recall that $f_{\sigma_{x}}(g)=f_{\sigma_{x}}(namk)=\sigma_{x}(nam)f(k)$ for 
$g=namk,n\in N,a\in A,m\in M,k\in K$. Let $\{n_{1}.n_{2},....\},%
\{a_{1},a_{2},...\}$ be respectively bases of $U(\mathfrak{n})$ and $U(%
\mathfrak{a})$. Let $Y_{1},...,Y_{n}$ be a basis of $\mathfrak{k}_{o}$such
that $B(Y_{i},Y_{j})=-\delta_{ij}$. The monomials $Y^{I}=\
Y_{1}^{i_{1}}\cdots Y_{n}^{i_{n}}$ form a basis of $U(\mathfrak{k})$. If $%
u\in U(\mathfrak{g)}$ and if $f\in H^{\infty}$ then 
\begin{equation*}
d\pi_{\sigma_{z}}(u)f(k)=L(Ad(k^{-1})u^{T})f_{\sigma_{z}}(k) 
\end{equation*}
with $L$ the left action of $U(\mathfrak{g})$ on $C^{\infty}(G,S)$. Also%
\begin{equation*}
Ad(k^{-1})u^{T}=\sum_{i,j,I}a_{i,j,I}(k)n_{i}a_{j}Y^{I}
\end{equation*}
finite sum. Thus%
\begin{equation*}
d\pi_{\sigma_{z}}(u)f(k)=\sum a_{i,j,I}(k)d\sigma_{z}(n_{i}a_{j})\left(
L(Y^{I})f\right) (k) 
\end{equation*}%
\begin{equation*}
=\sum a_{i,j,I}(k)d\sigma_{z}(n_{i}a_{j})\left( Ad(k)^{-1}\left(
Y^{I}\right) ^{T}f\right) (k). 
\end{equation*}
Writing 
\begin{equation*}
Ad(k)^{-1}\left( Y^{I}\right) ^{T}=\sum_{\left\vert J\right\vert
\leq\left\vert I\right\vert }b_{J,I}(k)Y^{J}
\end{equation*}
we have%
\begin{equation*}
d\pi_{\sigma_{z}}(u)f(k)=\sum_{i,j,I,J}b_{J,I}(k)a_{i,j,I}(k)d%
\sigma_{z}(n_{i}a_{j})Y^{J}f(k). 
\end{equation*}
Since the sum is finite the all of the indices are bounded. Let $\omega$ be
a compact subset of $Z$ then for each fixed $J$%
\begin{equation*}
\sum_{i,j,I}|b_{J,I}(k)a_{i,j,I}(k)|\left\Vert
d\sigma_{z}(n_{i}a_{j})\right\Vert \leq C_{u,\omega,J}^{1},k\in
K,z\in\omega. 
\end{equation*}
Thus%
\begin{equation*}
\left\Vert d\pi_{\sigma_{z}}(u)f\right\Vert ^{2}\leq\sum
C_{u,\omega,L}^{1}C_{u,\omega,J}^{1}\left\langle Y^{L}f,Y^{J}f\right\rangle 
\end{equation*}%
\begin{equation*}
\leq C_{u,\omega}\left\Vert (1+C_{K})^{l}f\right\Vert ^{2}
\end{equation*}
with $l$ the maximum of the $\left\vert J\right\vert $ for the multi-indices
that appear in the formulas above.
\end{proof}

\section{Analytic families of $J$--modules}

Notation as in the previous section. Throughout this section analytic will
mean complex analytic in the context of a complex analytic manifold and real
analytic in the contest of a real analytic manifold.

\begin{proposition}
\label{J-family}Let $X$ be an analytic or complex manifold. Let $(\lambda,R)$
be a family of objects in $W(K,\mathbf{D})$ over $X$ and define $R_{x}\in
W(K,\mathbf{D})$ to be the module with action $\lambda(x,\cdot)$. Let 
\begin{equation*}
V=\mathcal{H}\otimes E\otimes R 
\end{equation*}
$(K$ act by the tensor product action with its action on $E$ trivial) and
let $T_{x}:V\rightarrow J(R_{x})$ be given by $T_{x}(h\otimes e\otimes
r)=\alpha_{x}(\mathrm{symm}(h)e)(1\otimes r)$\ with $\alpha_{x}$ the action
of $\ U(\mathfrak{g}_{\mathbb{C}})$ on $J(R_{x})$. If $%
\lambda(x,y)=T_{x}^{-1}\alpha_{x}(y)T_{x}$ then $(\lambda,V)$ is an analytic
family of objects in $\mathcal{H}(\mathfrak{g},K)$ based on $X$.
\end{proposition}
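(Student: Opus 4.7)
The plan is to leverage Theorem~\ref{decompositions}(1) to trivialize both the underlying vector space and the $K$-module structure of $J(R_x)$ uniformly in $x$, so that all the $x$-dependence is pushed into the $\mathfrak{g}$-action through the analytic $\mathbf{D}$-action $\lambda(x,\cdot)$. The core algebraic fact driving everything is the PBW-type decomposition $U(\mathfrak{g}) \cong \mathrm{symm}(\mathcal{H}) \cdot \mathrm{symm}(E) \cdot \mathbf{D} \cdot U(\mathfrak{k})$ from that theorem.

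The first step is to check that $T_x$ is a $K$-module isomorphism from $V$ (with the prescribed tensor-product $K$-action, trivial on $E$) onto $J(R_x)$, and that, once transported, the $K$-action on $V$ is the same for every $x$. This rests on two facts: $\mathrm{symm}$ intertwines $Ad(K)$, and $E \subset S(\mathfrak{p})^K$ so $\mathrm{symm}(E) \subset U(\mathfrak{g})^K$. For $k \in K$,
\begin{equation*}
k \cdot T_x(h \otimes e \otimes r) = Ad(k)\mathrm{symm}(h)\,\mathrm{symm}(e) \otimes k \cdot r = T_x\bigl((k \cdot h) \otimes e \otimes (k \cdot r)\bigr),
\end{equation*}
confirming that the transported $K$-action on $V$ does not depend on $x$.

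The substantive step is to verify analytic dependence of $\lambda(x, y) = T_x^{-1} \alpha_x(y) T_x$ on $x$ for each fixed $y \in U(\mathfrak{g})$. I would apply the bijection of Theorem~\ref{decompositions}(1) to the product $y \cdot \mathrm{symm}(h) \cdot \mathrm{symm}(e) \in U(\mathfrak{g})$ to obtain a finite expansion
\begin{equation*}
y \cdot \mathrm{symm}(h)\,\mathrm{symm}(e) = \sum_i \mathrm{symm}(h_i)\,\mathrm{symm}(e_i)\,D_i\,k_i
\end{equation*}
with $h_i \in \mathcal{H}$, $e_i \in E$, $D_i \in \mathbf{D}$, $k_i \in U(\mathfrak{k})$, where --- crucially --- all these data depend on $y, h, e$ but not on $x$. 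Pushing this through $T_x^{-1}$, and using that in $J(R_x)$ the tensor is over $\mathbf{D} U(\mathfrak{k})$ so $D_i k_i$ acts on $r$ via $\lambda(x, D_i)(k_i \cdot r)$, one obtains
\begin{equation*}
\lambda(x, y)(h \otimes e \otimes r) = \sum_i h_i \otimes e_i \otimes \lambda(x, D_i)(k_i \cdot r).
\end{equation*}
Only the factors $\lambda(x, D_i)$ carry $x$-dependence, and they are analytic by hypothesis, so $x \mapsto \lambda(x, y)$ is analytic in the required sense.

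It remains to note that $V = \mathcal{H} \otimes E \otimes R$ has finite $K$-multiplicities --- $\mathcal{H}$ does by Kostant--Rallis and $E \otimes R$ is finite-dimensional --- and that $V$ is finitely generated over $U(\mathfrak{g})$ with generators lifted from a basis of $1 \otimes 1 \otimes R$, so $(\lambda(x,\cdot), V) \in \mathcal{H}(\mathfrak{g}, K)$ for every $x$. The main delicate point is the second step: one must be sure that the coefficients in the expansion of $y \cdot \mathrm{symm}(h)\,\mathrm{symm}(e)$ are genuinely $x$-independent. This is immediate from the observation that the multiplication map in Theorem~\ref{decompositions}(1) is a \emph{linear} bijection taking place in $U(\mathfrak{g})$ itself, before any specialization of the $\mathbf{D}$-action on $R$, so beyond careful bookkeeping no real obstacle arises.
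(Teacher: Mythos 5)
Your proof is correct and follows essentially the same route as the paper: expand $y\cdot\mathrm{symm}(h)\,\mathrm{symm}(e)$ in the basis of $U(\mathfrak{g})$ furnished by Theorem~\ref{decompositions}(1), observe that the coefficients are $x$-independent, pass the $\mathbf{D}\,U(\mathfrak{k})$-factor through the tensor to act on $R$, and note that the only $x$-dependence lands in $\lambda(x,\cdot)$, which is analytic by hypothesis. Your explicit verifications that $T_x$ is a $K$-isomorphism with $x$-independent $K$-action, and that each $(\lambda(x,\cdot),V)$ is admissible and finitely generated, are useful bookkeeping that the paper leaves implicit (having established them for $J(R)$ earlier) but add nothing new to the argument.
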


\begin{proof}
Let $\left\{ h_{i}\right\} $ be a basis of $\mathcal{H}$ such that for each $%
i$ there exists $\gamma\in\hat{K}$ such that $h_{i}\in\mathcal{H(\gamma)}$,
let $e_{j}$ be a basis of $E$, let $r_{m}$ be a basis of $R$ and let $%
Y_{1},...,Y_{n}$ be a basis of $\mathfrak{k}$. Then if $y\in U(\mathfrak{g}_{%
\mathbb{C}})$ 
\begin{equation*}
y\mathrm{symm}(h_{i})e_{j}z^{L}Y^{J}=%
\sum_{i_{1},j_{1},J_{1},L_{1}}b_{i_{1}j_{1}L_{1}J_{1},ijLK}(y)\mathrm{symm}%
(h_{i_{1}})e_{j_{1}}z^{L_{1}}Y^{J_{1}}. 
\end{equation*}
Thus%
\begin{equation*}
T_{x}^{-1}\alpha_{x}(y)T_{x}(h_{i}\otimes e_{j}\otimes r_{k})=\sum
b_{i_{1}j_{1}L_{1}J_{1},ij00}(y)h_{i_{1}}\otimes e_{j_{1}}\otimes\left(
\lambda_{x}(z^{L_{1}})Y^{J_{1}}r_{k}\right) . 
\end{equation*}
The proposition follows.
\end{proof}

Theorem \ref{decompositions} implies

\begin{lemma}
\label{covar}Let $R\in W(K,\mathbf{D})$ then 
\begin{equation*}
J(R)/\mathfrak{n}^{k+1}J(R)\cong\left( U(\mathfrak{n)/}\mathfrak{n}^{k+1}U(%
\mathfrak{n})\right) \otimes E\otimes H\otimes R_{|M}
\end{equation*}
as an $(\mathfrak{n},M)$-module with $\mathfrak{n}$ and $M$ acting trivially
on $E\otimes H$ and $\mathfrak{n}$ acting trivially on $R$.
\end{lemma}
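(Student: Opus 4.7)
The plan is to exploit the PBW-type factorization of $U(\mathfrak{g})$ over $\mathbf{D}U(\mathfrak{k})$ furnished by Theorem~\ref{decompositions}(2), and then check that the $\mathfrak{n}$- and $M$-actions on $J(R)$ respect the resulting tensor decomposition in the expected way before passing to the quotient.

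First I would use Theorem~\ref{decompositions}(2) to regard $U(\mathfrak{g})$ as a free right $\mathbf{D}U(\mathfrak{k})$-module with basis $\{n\cdot\mathrm{symm}(e)\cdot\mathrm{symm}(h)\}$, where $n$, $e$, $h$ run through bases of $U(\mathfrak{n})$, $E$, and $H$ respectively. This immediately yields a vector-space isomorphism
$$J(R)=U(\mathfrak{g})\otimes_{\mathbf{D}U(\mathfrak{k})}R \;\cong\; U(\mathfrak{n})\otimes E\otimes H\otimes R,\quad n\otimes e\otimes h\otimes r\mapsto n\,\mathrm{symm}(e)\,\mathrm{symm}(h)\otimes r.$$
Under this identification, left multiplication by $Y\in\mathfrak{n}$ sends $n\,\mathrm{symm}(e)\mathrm{symm}(h)\otimes r$ to $(Yn)\,\mathrm{symm}(e)\mathrm{symm}(h)\otimes r$, which stays inside the same decomposition because $\mathfrak{n}$ is a subalgebra. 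Hence the $\mathfrak{n}$-action is left multiplication on the first factor only, and $\mathfrak{n}^{k+1}J(R)$ corresponds to $(\mathfrak{n}^{k+1}U(\mathfrak{n}))\otimes E\otimes H\otimes R$; quotienting yields the desired module with $\mathfrak{n}$ acting trivially on $E\otimes H\otimes R$.

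Next I would handle the $M$-action. Since $k\cdot(g\otimes r)=\mathrm{Ad}(k)g\otimes kr$ and symmetrization intertwines $\mathrm{Ad}$, I would verify that $\mathrm{Ad}(m)$ fixes $\mathrm{symm}(E)$ and $\mathrm{symm}(H)$ pointwise for every $m\in M$: the first because $E\subset S(\mathfrak{p})^K$, and the second because $M$ centralizes $\mathfrak{a}$ and hence fixes $S(\mathfrak{a})\supset H$ pointwise. Then for $m\in M$,
$$m\cdot(n\,\mathrm{symm}(e)\mathrm{symm}(h)\otimes r)=\mathrm{Ad}(m)(n)\,\mathrm{symm}(e)\mathrm{symm}(h)\otimes mr,$$
so $M$ acts by $\mathrm{Ad}$ on $U(\mathfrak{n})$, trivially on $E$ and $H$, and via $R_{|M}$ on $R$. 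Because $\mathrm{Ad}(M)$ normalizes $\mathfrak{n}$, it preserves $\mathfrak{n}^{k+1}U(\mathfrak{n})$ and descends to the quotient, delivering the claimed $(\mathfrak{n},M)$-module isomorphism.

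I do not foresee a serious obstacle: the content of the lemma is really the observation that Theorem~\ref{decompositions}(2) places the $U(\mathfrak{n})$-factor on the \emph{left} of every summand, so quotienting by $\mathfrak{n}^{k+1}$ isolates that factor cleanly. The one substantive point to check is that $\mathrm{Ad}(M)$ acts trivially on $E$ and $H$, both immediate from their definitions.
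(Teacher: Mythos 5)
Your proof is correct and is essentially the argument the paper intends: the paper itself gives no separate proof for this lemma, simply citing Theorem~\ref{decompositions}, and your write-up unpacks that implication precisely — the PBW-type factorization with $U(\mathfrak{n})$ on the left yields the vector-space identification, the $\mathfrak{n}$-action is left multiplication on the first factor, and the $M$-action fixes $\mathrm{symm}(E)$ and $\mathrm{symm}(H)$ since $E\subset S(\mathfrak{p})^K$ and $M$ centralizes $\mathfrak{a}\supset$ the ambient space of $H$.
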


Let $(\mu,R)$ be an analytic family of objects in $W(K,\mathbf{D)}$ over $X$%
. Let $R_{x},x\in X$ be the object in $W(K,\mathbf{D)}$ with $K$ acting by
its action on $R$ and $\mathbf{D}$ acting by $\mu_{x}=\mu(x,\cdot)$.

\begin{proposition}
\label{dependence} Let $p_{x,k}:J(R_{x})\rightarrow\left( U(\mathfrak{n)/}%
\mathfrak{n}^{k+1}U(\mathfrak{n})\right) \otimes E\otimes H\otimes R_{|M}$
be given by the projection of $J(R_{x})$ onto $J(R)/\mathfrak{n}^{k+1}J(R)$
composed with the isomorphism of $J(R)/\mathfrak{n}^{k+1}J(R)$ with $\left(
U(\mathfrak{n)/}\mathfrak{n}^{k+1}U(\mathfrak{n})\right) \otimes E\otimes
H\otimes R$. If $v\in J(R_{x})$ and $u\in U(\mathfrak{g}_{\mathbb{C}})$ then
the map 
\begin{equation*}
x\mapsto p_{x,k}(uv) 
\end{equation*}
is analytic from $X$ to $\left( U(\mathfrak{n)/}\mathfrak{n}^{k+1}U(%
\mathfrak{n})\right) \otimes E\otimes H\otimes R_{|M}$. In particular, if $%
p_{k}$ is the canonical projection of projection of $U(\mathfrak{n)}\otimes
E\otimes H\otimes R_{|M}$ onto $\left( U(\mathfrak{n)/}\mathfrak{n}^{k+1}U(%
\mathfrak{n})\right) \otimes E\otimes H\otimes R_{|M}$ then define $%
\sigma_{k,x}(q)p_{k}(v)=p_{x,k}(qv)$ for $q\in U(L(Q))$.
\end{proposition}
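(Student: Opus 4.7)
The plan is to exploit the PBW-style decomposition of Theorem~\ref{decompositions}(2) to trivialize the underlying vector space of $J(R_x)$ in an $x$-independent fashion. Since $U(\mathfrak{g}) \cong U(\mathfrak{n})\otimes E \otimes H \otimes \mathbf{D}\otimes U(\mathfrak{k})$ as vector spaces, and since the $\mathbf{D}\otimes U(\mathfrak{k})$ factor is absorbed when forming $J(R_x) = U(\mathfrak{g})\otimes_{\mathbf{D}U(\mathfrak{k})} R_x$, one obtains a canonical linear isomorphism
\begin{equation*}
\Phi_x : U(\mathfrak{n})\otimes E \otimes H \otimes R \;\longrightarrow\; J(R_x)
\end{equation*}
whose source does not depend on $x$; all $x$-dependence is concentrated in how $\mathbf{D}$ subsequently acts on the $R$-factor via $\mu_x$. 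Under this trivialization, Lemma~\ref{covar} identifies the target of $p_{x,k}$ with the $x$-independent space $(U(\mathfrak{n})/\mathfrak{n}^{k+1}U(\mathfrak{n}))\otimes E \otimes H \otimes R_{|M}$, and $p_{x,k}$ itself becomes the canonical quotient in the first tensor slot.

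For a basis vector $v = n\otimes e\otimes h\otimes r$ of the source of $\Phi_x$ and $u\in U(\mathfrak{g})$, I would expand $u\cdot n\,\mathrm{symm}(e)\,\mathrm{symm}(h)$ in $U(\mathfrak{g})$ according to Theorem~\ref{decompositions}(2):
\begin{equation*}
u\cdot n\,\mathrm{symm}(e)\,\mathrm{symm}(h) \;=\; \sum_\alpha c_\alpha\, n_\alpha\,\mathrm{symm}(e_\alpha)\,\mathrm{symm}(h_\alpha)\,D_\alpha\,k_\alpha ,
\end{equation*}
where the scalars $c_\alpha\in\mathbb{C}$ and the tuples $(n_\alpha,e_\alpha,h_\alpha,D_\alpha,k_\alpha)$ are $x$-independent. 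Applying the defining relation $Dk\otimes r = 1\otimes \mu_x(D)kr$ of the balanced tensor product then yields
\begin{equation*}
\Phi_x^{-1}\bigl(u\cdot \Phi_x(v)\bigr) \;=\; \sum_\alpha c_\alpha\, n_\alpha \otimes e_\alpha \otimes h_\alpha \otimes \mu_x(D_\alpha)k_\alpha r .
\end{equation*}
The only $x$-dependence enters through $\mu_x(D_\alpha)$, which is analytic by the very definition of an analytic family in $W(K,\mathbf{D})$. Composing with the finite-rank projection $p_k$ preserves analyticity, proving the first assertion.

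For the closing sentence, I would verify that the filtration $\mathfrak{n}^{k+1}J(R_x)\subset J(R_x)$ is stable under the action of $\mathfrak{q}=\mathfrak{m}\oplus\mathfrak{a}\oplus\mathfrak{n}=L(Q)$. Indeed, $[\mathfrak{q},\mathfrak{n}]\subset\mathfrak{n}$, so for $X_1\cdots X_{k+1}\in\mathfrak{n}^{k+1}$ and $q\in\mathfrak{q}$ one has
\begin{equation*}
q\cdot X_1\cdots X_{k+1} \;=\; X_1\cdots X_{k+1}\,q \;+\; \sum_{i=1}^{k+1} X_1\cdots [q,X_i]\cdots X_{k+1} \;\in\; \mathfrak{n}^{k+1}U(\mathfrak{g}),
\end{equation*}
and induction on the degree of $q$ extends this to all of $U(\mathfrak{q})$. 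Consequently $p_{x,k}(qv)$ depends only on $p_k(v)$, so $\sigma_{k,x}(q)$ is well-defined on the quotient. The main technical nuisance, such as it is, is bookkeeping between this trivialization and the one from Proposition~\ref{J-family} based on part~1 of Theorem~\ref{decompositions}, but their transition is a purely algebraic ($x$-independent) PBW change of basis, so the two analytic-family structures agree and the argument above does produce an analytic family in the sense of Proposition~\ref{J-family}.
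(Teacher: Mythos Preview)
Your proof is correct and follows the same line as the paper's: both use the PBW-type decomposition of Theorem~\ref{decompositions}(2) to trivialize $J(R_x)$ as $U(\mathfrak{n})\otimes E\otimes H\otimes R$ independently of $x$, expand $u\cdot n\,\mathrm{symm}(e)\,\mathrm{symm}(h)$ in that basis with $x$-independent structure constants, and observe that the only $x$-dependence in the resulting expression for $uv$ comes from factors $\mu_x(D_\alpha)$, which are analytic by hypothesis. Your version is slightly more complete in that you explicitly check the $\mathfrak{q}$-stability of $\mathfrak{n}^{k+1}J(R_x)$ needed for $\sigma_{k,x}$ to descend to the quotient, and you flag the (harmless, $x$-independent) change of basis between the trivializations of parts~1 and~2 of Theorem~\ref{decompositions}; the paper leaves both of these implicit.
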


\begin{proof}
Let $x_{1},x_{2},...,x_{r}$ be a linearly independent set in $U(\mathfrak{n})
$ that projects to a basis in $U(\mathfrak{n)/}\mathfrak{n}^{k+1}U(\mathfrak{%
n})$ and let $x_{r+1},...$ be a basis of $\mathfrak{n}^{k+1}U(\mathfrak{n})$%
. Theorem \ref{decompositions} implies that if $Y_{1},...,Y_{n}$ is a basis
of $\mathfrak{k}$ and $h_{1},...,h_{r}$ is a basis for $\mathrm{symm}(E)%
\mathrm{symm}(H)$ then if $J$ is a multi-index of size $n$ and $I$ is a
multi-index of size then the set of elements%
\begin{equation*}
x_{l}z^{I}h_{m}Y^{J}
\end{equation*}
is a basis of $U(\mathfrak{g}_{\mathbb{C}})$( $z^{I}=z_{1}^{i_{1}}\cdots
z_{l}^{i_{l}}$ and the $z_{i}$ are the generators of $\mathbf{D}$). This
implies that if $u\in U(\mathfrak{g}_{\mathbb{C}})$ then 
\begin{equation*}
ux_{s}z^{I}h_{t}Y^{J}=%
\sum_{t_{1},I_{1.}t_{1},J_{1}}a_{s_{_{1}}J_{1}t_{1}L_{1},sItJ}(u)X^{J_{1}}z^{L_{1}}h_{i_{1}}Y^{J_{1}}. 
\end{equation*}
This implies that if we take a basis $v_{1},...,v_{d}$ of $R$ then the
elements $X^{J}h_{i}\otimes v_{j}$ form a basis of $J(V_{x})$. Thus if $u\in
U(\mathfrak{g}_{\mathbb{C}})$ then%
\begin{equation*}
ux_{s}h_{t}\otimes v_{j}=\sum
a_{s_{1}I_{1}t_{1}J_{1},s,0,t,0}(u)x_{s_{1}}z^{I_{1}}h_{t_{1}}Y^{J_{1}}%
\otimes v_{ju}= 
\end{equation*}%
\begin{equation*}
\sum
a_{s_{1}I_{1}t_{1}J_{1},s,0,t,0}(u)x_{s}h_{t}\otimes%
\mu_{x}(z^{I_{1}})Y^{J_{1}}v_{j}
\end{equation*}
Now apply $p_{k,x}$ getting the image of 
\begin{equation*}
\sum_{s_{1}\leq r}a_{s_{1}I_{1}t_{1}J_{1},s,0,t,0}(u)x_{s}h_{t}\otimes\left(
\mu_{x}(z^{I_{1}})Y^{J_{1}}v_{j}\right) 
\end{equation*}
The proposition follows from this formula.
\end{proof}

If $R\in W(K,\mathbf{D})$ the space $J(R)/\mathfrak{n}^{s+1}J(R)$ has a
natural structure of an $M$ module and an $\mathfrak{n}+\mathfrak{a}$
module. Since $\dim J(R)/\mathfrak{n}^{s+1}J(R)<\infty$ and $AN$ is a simply
connected Lie group $J(R)/\mathfrak{n}^{s+1}J(R)$ has a natural structure of
a finite dimensional continuous $Q$--module with action $\sigma_{s,R}$. Let
(as above) $p_{s}$ denote the natural surjection%
\begin{equation*}
p_{s}:J(R)\rightarrow J(R)/\mathfrak{n}^{s+1}J(R). 
\end{equation*}
If $k\in K$, $v\in J(R)$, define $S_{s,R}(v)(k)=p_{s,R}(kv)$, then $%
S_{s,R}(v)\in I(\sigma_{s,R}|_{M})$ and it is easily seen that $S_{s,R}\in%
\mathrm{Hom}_{\mathcal{H}(\mathfrak{g},K)}(J(R),(\pi_{\sigma_{s,R}},I(%
\sigma_{s,R}|_{M}))$. Combining the above results we have

\begin{theorem}
\label{J-morph}Let $(\mu,R)$ be an analytic (resp. continuous) family in $%
W(K,\mathbf{D})$ based on the manifold $X$. Let $(\lambda,V)$ be the
analytic family (as in Theorem \ref{J-family}) corresponding to $%
x\rightarrow J((\mu_{x},R))$. Then recalling that $V=\mathcal{H\otimes}%
E\otimes R$ define $T_{s}(x)(h\otimes e\otimes r)=S_{s},_{R_{x}}(\mathrm{symm%
}(h)e\otimes r).$ Then $T_{s}$ defines a homomorphism of the analytic family 
$(\lambda,V)$ to $(\xi,I(\sigma_{s,R_{x},}|_{M}))$ (in the sense of
Definition \ref{fam-hom}) \ with $\xi(x,y)=\pi_{\sigma_{s,R_{x}}}(y)$ and $%
\sigma_{s,R_{x}}$ is defined as above.
\end{theorem}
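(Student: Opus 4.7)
The plan is to decompose the assertion into two parts matching the definition of a family morphism: first, for each fixed $x\in X$, that $T_s(x)\colon V\to I(\sigma_{s,R_x}|_M)$ is a $(\mathfrak{g},K)$--morphism from $(\lambda(x,\cdot),V)$ to $(\xi(x,\cdot),I(\sigma_{s,R_x}|_M))$; second, that the assignment $x\mapsto T_s(x)$ depends analytically on $x$ in the sense of Definition \ref{fam-hom}.

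For the pointwise morphism claim I would factor $T_s(x)$ as the composition $S_{s,R_x}\circ T_x$, where $T_x\colon V\to J(R_x)$ is the $K$--equivariant linear isomorphism from Proposition \ref{J-family} and $S_{s,R_x}(v)(k)=p_{s,R_x}(kv)$ is the morphism from $J(R_x)$ to $I(\sigma_{s,R_x}|_M)$ constructed (and noted to be a $(\mathfrak{g},K)$--morphism) in the discussion immediately preceding the theorem. Since $\lambda(x,y):=T_x^{-1}\alpha_x(y)T_x$ by definition, $T_x$ intertwines $\lambda(x,\cdot)$ with $\alpha_x$, and since $S_{s,R_x}$ intertwines $\alpha_x$ with $\pi_{\sigma_{s,R_x}}=\xi(x,\cdot)$, the composition $T_s(x)$ intertwines $\lambda(x,\cdot)$ with $\xi(x,\cdot)$ and is manifestly $K$--equivariant.

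For the analytic dependence on $x$ I would first observe that the underlying $K$--representation space $I(\sigma_{s,R_x}|_M)$ is itself $x$--independent: by Lemma \ref{covar} the $M$--action on $J(R_x)/\mathfrak{n}^{s+1}J(R_x)\cong(U(\mathfrak{n})/\mathfrak{n}^{s+1}U(\mathfrak{n}))\otimes E\otimes H\otimes R_{|M}$ does not involve $\mathbf{D}$, so both the equivariance condition and the ambient space of smooth functions $K\to(U(\mathfrak{n})/\mathfrak{n}^{s+1}U(\mathfrak{n}))\otimes E\otimes H\otimes R$ are independent of $x$; only the $\mathfrak{a}$--action on the target, and hence $\xi(x,\cdot)$, varies with $x$. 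It then suffices to verify that for each basis vector $h\otimes e\otimes r$ of $V$ and each $k\in K$ the evaluation
\[
T_s(x)(h\otimes e\otimes r)(k)=p_{s,R_x}\bigl(\operatorname{Ad}(k)(\mathrm{symm}(h)e)\otimes kr\bigr)
\]
is analytic in $x$ and jointly smooth in $(x,k)$. Expanding $\operatorname{Ad}(k)(\mathrm{symm}(h)e)$ in a fixed PBW basis of $U(\mathfrak{g})$ with real analytic coefficients in $k$, Proposition \ref{dependence} supplies the required analyticity in $x$ for each resulting summand.

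The main technical point, rather than a new obstacle, is matching this assertion to the exact formulation of Definition \ref{fam-hom}; Proposition \ref{dependence}, together with its concluding sentence that defines $\sigma_{s,x}(q)p_s(v):=p_{x,s}(qv)$ and thereby already packages the $x$--dependence of $\xi(x,\cdot)$ as an analytic family, provides precisely the needed data. The intertwining identity $T_s(x)\lambda(x,y)=\xi(x,y)T_s(x)$ then follows tautologically from the pointwise morphism property established in the first step.
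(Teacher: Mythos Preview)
Your proposal is correct and matches the paper's own argument, which is literally the single sentence ``Combining the above results we have'' preceding the theorem. You have accurately identified the ingredients being combined---Proposition~\ref{J-family} for the isomorphism $T_x$ and the family structure on $V$, the construction of $S_{s,R}$ as a $(\mathfrak g,K)$--morphism in the paragraph before the theorem, Lemma~\ref{covar} for the $x$--independence of the underlying $M$--module, and Proposition~\ref{dependence} for the analyticity of $x\mapsto p_{x,s}(uv)$---and how they fit together.
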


We will use the notation $J(R)$ for the analytic family associated with $%
x\rightarrow J((\mu_{x},R))$.

\section{Imbeddings of $J$--modules and their Hilbert family completions}

Let $X$ be a connected real or complex analytic manifold and let $(\mu,R)$
be an analytic family of objects in $W(K,\mathbf{D})$ based on $X.$ The
purpose of this section is to prove

\begin{theorem}
\label{Imbedding}Let the representation of $Q$, $\sigma_{k,x}$, on 
\begin{equation*}
W_{k}=\left( U(\mathfrak{n)/}\mathfrak{n}^{k+1}U(\mathfrak{n})\right)
\otimes E\otimes H\otimes R_{|M}
\end{equation*}
be as in Proposition \ref{dependence} and let $T_{k}(x)$ be the analytic
family as in Theorem \ref{J-morph}. If $\omega$ is a compact subset of $X$
then there exists $k_{\omega}$ such that if $x\in\omega$ then $T_{k}(x)$ is
injective for $k\geq k_{u}$.
\end{theorem}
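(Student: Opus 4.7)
The plan is to establish pointwise injectivity via a Krull-type intersection argument, then show the injectivity locus is open for each fixed $k$, and finally invoke compactness of $\omega$. For fixed $x\in X$, Theorem \ref{decompositions}(2) realizes $J(R_x)$ as a finitely generated free $U(\mathfrak{n})$-module on $E\otimes H\otimes R$, so $\bigcap_{k\ge 0}\mathfrak{n}^{k+1}J(R_x)=0$. Evaluating $T_k(x)(v)$ at $e\in K$ gives $p_{k,R_x}(v)$, hence $\ker T_k(x)\subset\mathfrak{n}^{k+1}J(R_x)$ and $\bigcap_k\ker T_k(x)=0$. Since $T_k(x)$ factors through $T_{k+1}(x)$ via the $(\mathfrak{n},M)$-equivariant quotient $W_{k+1}\twoheadrightarrow W_k$, the chain $\ker T_k(x)$ is decreasing in $k$; combined with the finite length of the Harish-Chandra module $J(R_x)$, the chain stabilizes, so there exists $k(x)$ with $\ker T_k(x)=0$ for all $k\ge k(x)$.

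For the second step, fix $k$ and consider $\Omega_k:=\{x:T_k(x)\text{ injective}\}$. By Lemma \ref{covar} the $M$-structure on $W_k$ depends only on $R|_M$, so $I(\tau_k)$ is a fixed $K$-module as $x$ varies, and Proposition \ref{dependence} makes $T_k(x)$ an analytic family of $K$-equivariant linear maps $V\to I(\tau_k)$ between fixed $K$-modules. On each finite-dimensional isotypic component $V(\gamma)$ (also independent of $x$), injectivity of $T_k(x)|_{V(\gamma)}$ is the non-vanishing of analytic minors and hence an open condition. To combine these infinitely many open conditions into openness of $\Omega_k$, I would use the finite length of $J(R_x)$: any nonzero $(\mathfrak{g},K)$-submodule contains a simple submodule, whose Vogan minimal $K$-type lies in a finite set $F\subset\hat{K}$, and upper semicontinuity of composition factors in an analytic family allows one to take $F$ locally constant around any $x_0\in\Omega_k$; openness at $x_0$ then follows from a finite intersection of per-$\gamma$ open conditions. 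The $\Omega_k$ are moreover nested increasing ($\Omega_k\subset\Omega_{k+1}$) by the same factorization argument.

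Finally, by the first step the open sets $\Omega_k$ cover $\omega$; compactness extracts a finite subcover, and nesting collapses it to a single $\Omega_{k_\omega}$, supplying the required bound. The main obstacle is the openness step, specifically the uniform reduction to finitely many $K$-types around a given $x_0$, which must accommodate possible jumps in the composition series of $J(R_x)$. If a direct minimal $K$-type argument is awkward to pin down, a viable alternative is to invoke the smoothable Hilbert completions of Proposition \ref{Par-access} to recast injectivity of $T_k(x)$ as a perturbation problem for continuously varying Hilbert operators, enabling the use of analytic perturbation theory to propagate injectivity from $x_0$ to a neighborhood.
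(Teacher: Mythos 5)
Your first step is sound and implicitly parallels the paper: $J(R_x)$ is a free $U(\mathfrak{n})$-module on $E\otimes H\otimes R$ by Theorem \ref{decompositions}(2), so $\bigcap_k\mathfrak{n}^{k+1}J(R_x)=0$, and since $\ker T_k(x)\subset\mathfrak{n}^{k+1}J(R_x)$ (and the kernels are nested decreasing and each is a submodule of a finite-length module), there is a pointwise $k(x)$ beyond which $T_k(x)$ is injective. Your third step (compactness plus nesting) would also be fine \emph{if} you had openness. The gap is exactly where you flag it: you need $\Omega_k=\{x: T_k(x)\ \text{injective}\}$ to be open, and the mechanism you propose does not deliver it. ``Upper semicontinuity of composition factors in an analytic family'' is not an available fact; composition series typically \emph{lengthen} on reducibility loci, and the minimal $K$-types of simple submodules of $\ker T_k(x)$ can jump as $x$ moves off a special point. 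So the reduction to finitely many isotypic components $V(\gamma)$, which is what makes injectivity an open condition in finite rank, is not justified, and the fallback ``analytic perturbation theory'' for the Hilbert completions is a hope rather than an argument (a nonzero closed invariant subspace can perfectly well appear under arbitrarily small perturbation).

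The paper's proof avoids openness altogether, replacing your step 2 by a growth-of-infinitesimal-character argument resting on van der Noort's theorem. Concretely: if the conclusion fails on $\omega$, then for every $j$ there are $k\ge j$ and $x\in\omega$ with $\ker T_k(x)\neq 0$ but $\ker T_{k+1}(x)=0$ (by your pointwise step). The $Q$-module exact sequence $0\to\big(\mathfrak{n}^{k+1}U(\mathfrak{n})/\mathfrak{n}^{k+2}U(\mathfrak{n})\big)\otimes E\otimes H\otimes R_{|M}\to W_{k+1,x}\to W_{k,x}\to 0$ yields $(\ast)\ 0\to I(\eta_{k,x})\to I(\sigma_{k+1,x})\to I(\sigma_{k,x})\to 0$, and $T_{k+1}(x)$ embeds $\ker T_k(x)$ into $I(\eta_{k,x})$. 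The $\mathfrak{a}$-parameters $\nu$ of the composition factors of $\eta_{k,x}$ are, by construction, of the form (initial parameter)$+\alpha_1+\cdots+\alpha_{k+1}$ with the $\alpha_i$ restricted positive roots, and the Harish-Chandra parameter of $I(\sigma_\nu)$ is $\Lambda_\sigma+\nu$. Hence $ch(J(R_x))$ must contain a parameter of the form $\Lambda+\nu_{i_k}+\beta_k$ with $\beta_k$ a sum of $k$ positive restricted roots, which escapes any fixed compact set as $k\to\infty$. But Corollary \ref{compactness} (the consequence of van der Noort's Lemma \ref{VanNort-Finite}) forces $\bigcup_{x\in\omega}ch(J(R_x))$ to be compact. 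Contradiction, and $k_\omega$ exists. This is precisely the ingredient missing from your argument: it is the a priori compactness of the set of infinitesimal characters over $\omega$ — not any continuity of kernels or composition series — that uniformizes the pointwise bound.
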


\begin{proof}
This is a slight extension of a result in \cite{HOW}. Given $k$ then $%
(\sigma_{k,x},W_{k})$ as a composition series $W_{k,x}=W_{k,x}^{1}\supset
W_{k,x}^{2}\supset...\supset W_{k,x}^{r}\supset W_{k,x}^{r+1}=\{0\}$ and
each $W_{k,x}^{i}/W_{k,x}^{i+1}$ is isomorphic with the representation $%
(\lambda_{j,\nu_{j}},H_{\lambda_{j}})$ with $(\lambda_{j},H_{j})$ an
irreducible representation of $M$ and $\nu_{j}\in\mathfrak{a}_{\mathbb{C}%
}^{\ast}$ and $\lambda_{j,\nu}(man)=a^{\nu+\rho}\lambda_{j}(m)$ with $m\in
M,a\in A$ and $n\in N$. Also note that there is a natural $Q$--module exact
sequence%
\begin{equation*}
0\rightarrow\left( \mathfrak{n}^{k+1}U(\mathfrak{n)/n}^{k+2}U(\mathfrak{n}%
)\right) \otimes E\otimes H\otimes R_{|M}\rightarrow W_{k+1,x}\rightarrow
W_{k,x}\rightarrow0. 
\end{equation*}
We may assume that the composition series is consistent with this exact
sequence. This implies that the $\nu_{j}$ that appear in $W_{k}/W_{k+1}$ are
of the form $\mu+\alpha_{1}+...+\alpha_{k+1}$ with $\alpha_{i}$ a restricted
positive root (i.e. a weight of $\mathfrak{a}$ on $\mathfrak{n}$).

Now consider the corresponding exact sequence of $(\mathfrak{g},K)$--modules.%
\begin{equation*}
\overset{}{(\ast)}0\rightarrow I(\eta_{k,x})\rightarrow I(\sigma
_{k+1,x})\rightarrow I(\sigma_{k,x})\rightarrow0. 
\end{equation*}
The $(\mathfrak{g},K)$--modules $I(\sigma_{\nu})$ with $\sigma$ an
irreducible representation of $M$ with Harish-Chandra parameter $%
\Lambda_{\sigma}$ (for $Lie(M)_{\mathbb{C}}$) and $\nu\in\mathfrak{a}_{%
\mathbb{C}}^{\ast}$ have infinitesimal character with Harish-Chandra
parameter $\Lambda_{\sigma}+\nu$. We are finally ready to prove the theorem.

Let $C_{\omega}$ be the compact set $\cup_{x\in\omega}ch(J(R_{x}))$. Let $%
C_{\omega}=\cup_{i=1}^{k_{\omega}}(\Lambda_{i}+D_{i})$ with $D_{i}$ compact
in $\mathfrak{a}_{\mathbb{C}}^{\ast}$ and $k_{\omega}<\infty$. \ Assume that
the result is false for $\omega$. Then for each $j$ there exists $k\geq j$
and $x$ such that $\ker T_{k}(x)\neq0$ but $\ker T_{k+1}(x)=0$. Label the
Harish -Chandra parameters that appear in $I(\sigma_{o,x})$, $%
\Lambda_{1}+\nu _{1},...,\Lambda_{s}+\nu_{s}$ with $\Lambda_{i}\in
Lie(T)^{\ast}$ and $\nu _{i}\in\mathfrak{a}_{\mathbb{C}}^{\ast}$ (recall
that we have fixed a maximal torus of $M$). The above observations imply
that $ch(J(R_{x}))$ contains an element of the form $\Lambda+\nu_{i_{k}}+%
\beta_{k}$ with $\beta_{k}$ a sum of $k$ positive roots, $\Lambda\in
Lie(T)^{\ast}$ and $1\leq i_{k}\leq s$. We now have our contradiction $%
\nu_{i_{k}}+\beta_{k}\in\cup D_{i}$ which is compact. But the set of $%
\nu_{i_{k}}+\beta_{k}$ is unbounded.
\end{proof}

\begin{theorem}
\label{globalize-J}Let $U\subset Z$ be open with compact closure. There
exists a continuous family $(\pi,H)$ of Hilbert representations of $G$ ( see
Definition \ref{Hilbert-Fam}) based on $U$ such that the continuous family
of $(\mathfrak{g},K)$--modules $(d\pi,H_{K}^{\infty})$ is isomorphic with
the analytic family $z\mapsto J(L_{z})$ of objects in $\mathcal{H}(\mathfrak{%
g},K)$ based on on $U$ (thought of as a continuous family). Furthermore, the
family $(\pi,H)$ is smoothable (see Definition \ref{accessible}) .
\end{theorem}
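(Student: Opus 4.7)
The plan is to realize the family $(\pi,H)$ as a subfamily of the parabolically induced family of Proposition \ref{Par-access}, via the injective Harish-Chandra morphisms of Theorem \ref{Imbedding}. Since $\overline{U}$ is compact, Theorem \ref{Imbedding} yields an integer $k$ such that $T_k(z)\colon J(L_z)\to I(\sigma_{k,z}|_M)$ is injective for every $z\in U$. By Proposition \ref{Par-access}, the family $z\mapsto(\pi_{\sigma_{k,z}}, H^{\mathrm{par}})$ is a continuous, smoothable family of Hilbert representations over $U$, whose underlying Hilbert space $H^{\mathrm{par}}$ is independent of $z$ (it is the unitary $K$-induction from $M$ of the common $M$-type).

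For each $z\in U$, let $H_z\subset H^{\mathrm{par}}$ be the Hilbert-space closure of $T_k(z)(J(L_z))$. Since $T_k(z)$ is a $(\mathfrak{g},K)$-morphism into the $K$-finite smooth vectors of $H^{\mathrm{par}}$, $H_z$ is a closed $G$-invariant subspace; take $\pi_z$ to be the restriction of $\pi_{\sigma_{k,z}}$ to $H_z$. By the C-W theorem applied to the dense Harish-Chandra submodule, $(H_z)_K^\infty$ coincides with $T_k(z)(J(L_z))\cong J(L_z)$, giving the desired isomorphism of $(\mathfrak{g},K)$-module families.

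To package the $H_z$ as a continuous family of Hilbert representations I would pull back the inner product of $H^{\mathrm{par}}$ along $T_k(z)$, obtaining an analytic family of $K$-invariant inner products on the fixed vector space $\mathcal{H}\otimes E\otimes L$; analyticity follows from the explicit construction in Proposition \ref{dependence} and Theorem \ref{J-morph}, by which $z\mapsto T_k(z)v$ is analytic for each fixed $v$. Completing with respect to these inner products recovers the $H_z$ and supplies the trivialization required for a continuous Hilbert family in the sense of Definition \ref{Hilbert-Fam}; joint continuity of the transferred $G$-action in $(z,g,v)$ reduces to that of $(z,g)\mapsto\pi_{\sigma_{k,z}}(g)$ on $H^{\mathrm{par}}$. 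Smoothability is then inherited: the bound $\|d\pi_{\sigma_{k,z}}(u)f\|\le C_{u,\omega}\|(1+C_K)^l f\|$ established in the proof of Proposition \ref{Par-access} for $f\in (H^{\mathrm{par}})^\infty$ holds a fortiori for $f\in H_z^\infty$, yielding the locally uniform estimate demanded by the smoothable (accessible) condition.

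The principal technical obstacle is the continuity of the family $z\mapsto H_z$ in the paper's precise sense: one must rule out dimension-jumping or orthogonality-jumping pathologies of the subspaces $T_k(z)(J(L_z))\subset H^{\mathrm{par}}$. Injectivity from Theorem \ref{Imbedding} prevents collapses inside the Harish-Chandra module, and the analyticity of $T_k$ gives continuity of the pullback inner product; combined with the trivialization by the fixed space $\mathcal{H}\otimes E\otimes L$, these should deliver the required continuity in the sense of Definition \ref{Hilbert-Fam}, after which the remaining statements of the theorem follow essentially formally.
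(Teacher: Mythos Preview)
Your proposal is correct and follows essentially the same route as the paper: embed the $J$-family into the parabolically induced family via Theorem \ref{Imbedding}, use Proposition \ref{Par-access} to see that the ambient family is a smoothable continuous Hilbert family, and then pass to the sub-Hilbert-family. The only difference is packaging: the paper delegates the entire ``pass to the sub-Hilbert-family'' step---pulling back the inner product, trivializing the $z$-dependent subspaces to a fixed Hilbert space via $K$-equivariant unitaries, checking strong continuity of the resulting family, and verifying that smoothability is inherited---to the general Proposition \ref{Hilb-access}, whereas you sketch these ingredients directly. What you flag as the ``principal technical obstacle'' (continuity of $z\mapsto H_z$ in the sense of Definition \ref{Hilbert-Fam}) is exactly what Proposition \ref{Hilb-access} establishes, using Lemma \ref{orthhbases} to produce continuously varying $K$-adapted orthonormal bases of the isotypic components and then conjugating by the unitary $L(x_o,x)$ to land in a single fixed Hilbert space; your appeal to analyticity of $T_k$ and constancy of the $K$-type multiplicities is the right input for that lemma.
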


\begin{proof}
Let $\gamma\in\hat{K}$ then Theorem \ref{decompositions} 2.implies 
\begin{equation*}
\dim J(L_{z})(\gamma)=\dim E\dim\gamma\dim\mathrm{Hom}_{K}(V_{\gamma },%
\mathcal{H}\otimes L). 
\end{equation*}
for every $z\in Z$. In particular it is independent of $z$. Theorem \ref%
{Imbedding} implies that there exists $k$ and for each $u\in U$ the map 
\begin{equation*}
T_{k,L_{u}}:J(L_{u})\rightarrow I(\sigma_{k,L_{u}}) 
\end{equation*}
is injective. Note that the space of $K$--finite vectors in $I(\sigma
_{k,L_{u}})$ is the $K$--finite induced representation $Ind_{M}^{K}(%
\sigma_{k,L_{|M}})$ and hence independent of $u$. Let \thinspace $%
(H_{1},\left\langle ...,...\right\rangle )$ be the Hilbert space completion
of $Ind_{M}^{K}(\sigma_{k,L_{|M}})$ corresponding to unitary induction from $%
M$ to $K$. This gives an smoothable analytic family of Hilbert
representations of $G$ (Proposition \ref{Par-access} , $\mu_{z}$.
Proposition \ref{Hilb-access} now implies the result.
\end{proof}

\section{The main theorem}

\begin{theorem}
\label{hilb-main}Let $(\pi,V)$ be an analytic family of objects in $\mathcal{%
H}(\mathfrak{g},K)$ based on the analytic manifold $X$. Let $\ x_{o}\in X$
then there exists, $U$, an open neighborhood of $x_{o}$ in $X,$ and a
smoothable, continuous family of Hilbert representations $(\mu_{U},H_{U})$
such that the family $(d\mu_{U},\left( H_{U}\right) _{K}^{\infty})$ is
isomorphic with $(\pi_{|U},V)$(as a continuous family).
\end{theorem}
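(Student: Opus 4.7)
The plan is to build the local Hilbert family by taking a Hilbert globalization of a resolving $J$-module and quotienting by the Hilbert closure of the image of a second resolving $J$-module. The C--W Theorem ensures that maps of Harish--Chandra modules lift uniquely to the smooth Fréchet category, and Theorem \ref{globalize-J} supplies the Hilbert globalizations of the resolving $J$-modules.

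First I would construct, on a neighborhood $U$ of $x_{o}$ with compact closure, a length-one piece of a $J$-resolution of $(\pi,V)$. Choose a finite $F\subset\widehat{K}$ so that $R_{0}:=\sum_{\gamma\in F}V(\gamma)$ generates $V_{x_{o}}$ as a $(\mathfrak{g},K)$-module. Because $\mathbf{D}$ commutes with $K$, the subspace $R_{0}$ is automatically $\mathbf{D}$-stable under every $\pi_{x}$, so $(\pi_{|\mathbf{D}\times X},R_{0})$ is an analytic family in $W(K,\mathbf{D})$. Proposition \ref{J-family} then yields an analytic family $J(R_{0})$ and the evaluation morphism $\phi_{0,x}(g\otimes r)=\pi_{x}(g)r$ is a morphism of analytic families. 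At $x_{o}$ it is surjective on every $K$-isotypic component; since both sides have the same constant (in $x$) finite $K$-multiplicities, upper semicontinuity of rank combined with the local $J$-resolution result obtained from van der Noort's finiteness theorem (cited in the introduction) produces a single neighborhood $U$ of $x_{o}$ on which $\phi_{0}$ is surjective uniformly in $\gamma$. Applying the same argument to the analytic subfamily $\ker\phi_{0}$ (possibly shrinking $U$) produces a second $J$-family $J(R_{1})$ and a morphism of families $\phi_{1}\colon J(R_{1})\to J(R_{0})$ whose image is $\ker\phi_{0}$.

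Next I would globalize. Apply Theorem \ref{globalize-J} to each $J(R_{i})$ over $U$ to obtain smoothable continuous families of Hilbert representations $(\pi_{i},H_{i})$ with $(d\pi_{i},(H_{i})_{K}^{\infty})\cong J(R_{i})$ as continuous families. Note that, by the construction in the proof of Theorem \ref{globalize-J}, the underlying Hilbert space of $H_{i}$ is the fixed space $\operatorname{Ind}_{M}^{K}(\sigma_{k,R_{i}|M})$ and only the $G$-action varies with $x$. By the Casselman--Wallach Theorem applied fiberwise, $\phi_{1,x}$ extends uniquely to a continuous $G$-equivariant map $\widehat{\phi}_{1,x}\colon H_{1}^{\infty}\to H_{0}^{\infty}$ with closed image and topological complement. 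Uniqueness of this extension, together with the analyticity of $\phi_{1}$ on $K$-finite vectors, upgrades $\widehat{\phi}_{1}$ to a continuous family of morphisms between the smooth Fréchet families.

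Finally, let $\overline{K}_{x}\subset H_{0}$ be the Hilbert-norm closure of $\widehat{\phi}_{1,x}(H_{1}^{\infty})$. Each $\overline{K}_{x}$ is a closed $G$-stable subspace of $H_{0}$, and by C--W we have $\overline{K}_{x}\cap H_{0}^{\infty}=\widehat{\phi}_{1,x}(H_{1}^{\infty})$. Set $H_{U}$ equal to the family of quotient Hilbert spaces $H_{0}/\overline{K}_{x}$ with the induced $G$-action $\mu_{U,x}$. Then the smooth vectors of $H_{U,x}$ are $H_{0,x}^{\infty}/\widehat{\phi}_{1,x}(H_{1}^{\infty})$, whose $K$-finite part is $J(R_{0})_{x}/\phi_{1,x}(J(R_{1})_{x})=V_{x}$, so $(d\mu_{U},(H_{U})_{K}^{\infty})\cong(\pi_{|U},V)$ as continuous families. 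Smoothability of $(\mu_{U},H_{U})$ in the sense of Definition \ref{smoothable} is inherited from that of $(\pi_{0},H_{0})$ via the quotient.

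The step I expect to be most delicate is the third: showing that the family of Hilbert quotients $H_{0}/\overline{K}_{x}$ is a genuine continuous and smoothable family of Hilbert representations in the sense of Definitions \ref{Hilbert-Fam} and \ref{smoothable}. The issue reduces to continuity of $x\mapsto\overline{K}_{x}$, which must be extracted from continuity of $\widehat{\phi}_{1}$ at the smooth Fréchet level; here the fact that $H_{0}$ and $H_{1}$ are carried by constant underlying Hilbert spaces (only the $G$-action varying) is what makes the argument tractable. A secondary but nontrivial technical point is the uniform-in-$\gamma$ construction of the local $J$-resolution in the first step, which is where van der Noort's finiteness theorem is essential.
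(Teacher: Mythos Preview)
Your overall strategy is in the right spirit, but it is more elaborate than the paper's proof and introduces an unnecessary complication that, as written, is a genuine gap.

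The paper uses only \emph{one} resolving $J$-module. After invoking Theorem \ref{loc-finite} to obtain a finite $F\subset\hat{K}$ with $\pi_{x}(U(\mathfrak{g}))\sum_{\gamma\in F}V(\gamma)=V$ for all $x$ in a neighborhood $U_{1}$ of $x_{o}$ with compact closure, it forms $R^{0}=\sum_{\gamma\in F}V(\gamma)$, globalizes $J(R^{0})$ via Theorem \ref{globalize-J}, and then applies Proposition \ref{hilb-surj} directly to the surjection $T_{0}:(d\sigma,(H^{0})_{K})\to(\pi,V)$ over a contractible $U\subset U_{1}$. Proposition \ref{hilb-surj} is precisely the machine that turns a surjection from a smoothable Hilbert family onto a continuous family of Harish--Chandra modules into a smoothable Hilbert globalization of the target; no second $J$-module, no C--W lifting of $\phi_{1}$, and no ad hoc analysis of $H_{0}/\overline{K}_{x}$ is needed.

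The gap in your version is the step ``applying the same argument to the analytic subfamily $\ker\phi_{0}$.'' In the paper's framework an analytic family is a \emph{fixed} admissible $(\mathfrak{k},K)$-module with a varying $\mathfrak{g}$-action. The kernels $\ker\phi_{0,x}$ are varying subspaces of $J(R_{0})$; although their $K$-type multiplicities are constant once $\phi_{0}$ is surjective, they do not immediately constitute an analytic family in that sense. You would first have to show that $x\mapsto\ker\phi_{0,x}$ is a $K$-vector bundle and trivialize it over a contractible neighborhood (cf.\ the lemma preceding Proposition \ref{Hilb-access}), and then check that the transported $\mathfrak{g}$-action is analytic. This can be done, but it is exactly the kind of work that Proposition \ref{hilb-surj} packages for you. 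Your acknowledged ``most delicate step'' --- continuity and smoothability of the quotient $H_{0}/\overline{K}_{x}$ --- is likewise the content of Proposition \ref{hilb-surj}. So the cleanest fix is to drop $J(R_{1})$ and the C--W lift of $\phi_{1}$ altogether and invoke Proposition \ref{hilb-surj} on $T_{0}$.
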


\begin{proof}
Let $U_{1}$ be an open neighborhood of $x_{o}$ in $X$ with compact closure.
Then Theorem \ref{loc-finite} implies that there exists $F_{U_{1}}^{0}\subset%
\hat{K}$ a finite subset such that $\pi_{x}(U(\mathfrak{g}_{\mathbb{C}%
}))\sum_{\gamma\in F_{U}^{0}}V(\gamma)=V$. Let $R^{0}=\sum_{\gamma\in
F_{U}}V(\gamma)$. $R^{0}$ is invariant under the action $\pi_{x}(\mathbf{D)}$
for all $x\in X$. This implies that $(\left( \pi_{|U}\right) |_{\mathbf{D}%
},R^{0})$ defines an analytic family of objects in $W(K,\mathbf{D})$ based
on $U_{1}$. Let $J(R^{0})$ be the corresponding $J$--family. Then we have
the surjective analytic homomorphism of families%
\begin{equation*}
\begin{array}{ccccc}
& T_{0} &  &  &  \\ 
J(R^{0}) & \rightarrow & V_{|U} & \rightarrow & 0%
\end{array}
\end{equation*}
with $T_{0}(x)$ mapping $J(R_{x}^{0})$ onto $V$ for all $x\in U_{1}$. Let $%
(\sigma,(H^{0},(...,...)))$ be the smoothable, continuous family of Hilbert
representations based on $U_{1}$ corresponding to $J(R^{0})$ as in Theorem %
\ref{globalize-J}. Let $U$ be an open neighborhood of $x_{o}$ contained in $%
U_{1}$ such that $\overline{U}$ is contractible. The theorem now follows
from Proposition \ref{hilb-surj}.
\end{proof}

The main result is

\begin{theorem}
\label{main}Let $(\pi,V)$ be a holomorphic family of objects in $\mathcal{H}(%
\mathfrak{g},K)$ based on the connected complex manifold $X$. Then there
exists a holomorphic family of smooth Fr\'{e}chet representations, $%
(\lambda,W)$ based on $X$ that globalizes $(\pi,V)$.
\end{theorem}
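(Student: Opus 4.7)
The plan is to pass from the local Hilbert globalizations produced by Theorem \ref{hilb-main} to local holomorphic Fr\'echet globalizations, and then to glue these together over $X$ using the uniqueness built into the Casselman--Wallach theorem.

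First, cover $X$ by an open collection $\{U_\alpha\}$ such that each $U_\alpha$ is a neighborhood of some point $x_\alpha\in X$ to which Theorem \ref{hilb-main} applies, producing a smoothable continuous family of Hilbert representations $(\mu_\alpha,H_\alpha)$ over $U_\alpha$ whose $K$--finite smooth vectors reconstruct $(\pi_{|U_\alpha},V)$ as a continuous family of $(\mathfrak{g},K)$--modules. Because smoothability implies (by the technical results in the appendices on smoothable families) that the associated $C^\infty$--vector family $(\lambda_\alpha,W_\alpha):=(d\mu_\alpha,(H_\alpha)^\infty)$ is a continuous family of smooth Fr\'echet representations of locally uniform moderate growth, one obtains local Fr\'echet globalizations. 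Since the input family $(\pi,V)$ is holomorphic in $x$ and the construction of the resolving $J$--modules in Theorem \ref{globalize-J} and Proposition \ref{hilb-surj} proceeds through operators whose matrix coefficients depend holomorphically on the parameter (by Proposition \ref{dependence} and Theorem \ref{J-morph}), the $K$--finite structure of $(\lambda_\alpha,W_\alpha)$ is holomorphic in $x\in U_\alpha$, and hence so is the Fr\'echet family in the sense of Definition \ref{an-gK} pushed forward through the inverse functor $V\mapsto\overline{V}$.

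Next, on each overlap $U_\alpha\cap U_\beta$ the two local Fr\'echet families $(\lambda_\alpha,W_\alpha)$ and $(\lambda_\beta,W_\beta)$ have the same underlying holomorphic family of Harish--Chandra modules, namely $(\pi,V)_{|U_\alpha\cap U_\beta}$. For each $x$ in the overlap the C-W theorem (applied in the form stated as the opening theorem of the paper, together with its corollary that the $K$--finite functor is an isomorphism of categories) produces a unique topological $G$--isomorphism $\varphi_{\beta\alpha}(x)\colon W_\alpha(x)\to W_\beta(x)$ extending the identity on $V_x$. Uniqueness immediately yields the cocycle identity $\varphi_{\gamma\beta}\circ\varphi_{\beta\alpha}=\varphi_{\gamma\alpha}$ on triple overlaps, and in particular $\varphi_{\alpha\alpha}=\mathrm{id}$. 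Thus the $\{(W_\alpha,\varphi_{\beta\alpha})\}$ form gluing data for a global family $(\lambda,W)$ of smooth Fr\'echet representations over $X$.

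To see that the glued family is holomorphic, one checks the transition functions are holomorphic in the parameter: this is where the C-W isomorphism-of-categories statement does the real work. The map $\varphi_{\beta\alpha}(x)$ is, by the description of the inverse functor $V\mapsto\overline{V}$ in terms of Hilbert completions with respect to any inner product in $\mathcal{I}(V)$, determined entirely by the identity map on the $K$--finite part, which depends holomorphically on $x$ by assumption. A short argument using the seminorm description of $\overline{V}$ and the continuity of the $\varphi_{\beta\alpha}$ in $x$ (inherited from continuity of the local families) promotes continuity of the transition functions to holomorphy in the Fr\'echet family sense.

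The main obstacle is this last step: verifying that the canonical $G$--isomorphisms produced by uniqueness of smooth Fr\'echet globalization depend holomorphically, and not merely continuously, on $x$. This hinges on checking that the smoothable local Hilbert family produced by Theorem \ref{globalize-J} really is holomorphic in $x$ at the level of matrix coefficients on $K$--finite vectors, so that the transported norms $\|\cdot\|_{\langle\,,\,\rangle}$ vary holomorphically with $x$ through the parameter. Once this is established, the gluing is automatic and the theorem follows.
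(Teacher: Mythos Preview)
Your overall strategy matches the paper's: cover $X$ by opens on which Theorem~\ref{hilb-main} produces smoothable Hilbert families, pass to $C^\infty$--vectors, and use the C--W theorem to patch. But you have overcomplicated the last two steps and identified the wrong obstacle.

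First, on the gluing: there are no nontrivial transition functions. The inverse functor $V\mapsto\overline V$ in the paper is defined intrinsically as a canonical subspace of $\prod_{\gamma\in\widehat K}V(\gamma)$, so for every $x\in U_\alpha\cap U_\beta$ one has $H_\alpha^\infty=\overline{V_x}=H_\beta^\infty$ \emph{as the same Fr\'echet space}, with the same $G$--action. Your $\varphi_{\beta\alpha}(x)$ is therefore the identity, and the cocycle discussion, while not wrong, is unnecessary. The paper simply sets $W$ equal to the common value of the $H_\alpha^\infty$ and $\lambda_x(g)=(\sigma_\alpha)_x(g)$ for any $\alpha$ with $x\in U_\alpha$.

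Second, and more importantly, your route to holomorphy is circuitous and the stated ``main obstacle'' is a red herring. Norms are real-valued and cannot vary holomorphically; the phrase ``the transported norms vary holomorphically with $x$'' does not make sense. The paper instead invokes Theorem~\ref{cont-hol}: once $(\sigma_\alpha,H_\alpha^\infty)$ is known to be a \emph{continuous} family of smooth Fr\'echet representations (this is Proposition~\ref{Cont-diff}, using smoothability), and once its $K$--finite part $(d\sigma_\alpha,(H_\alpha)_K)=(\pi_{|U_\alpha},V)$ is holomorphic by hypothesis, Theorem~\ref{cont-hol} upgrades the whole Fr\'echet family to holomorphic via a Cauchy--integral and Grothendieck's weak-implies-strong holomorphy. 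You should cite Proposition~\ref{Cont-diff} and Theorem~\ref{cont-hol} explicitly rather than arguing through transition maps.
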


\begin{proof}
The above theorem implies that there is an open covering, $\{U_{\alpha}\}$,
of $X$ and for each $\alpha$ a continuous family of smoothable, admissible
Hilbert representations based on $U_{\alpha}$, $(\sigma_{\alpha},H_{\alpha})$%
, such that $((d\sigma_{\alpha})_{x},\left( H_{\alpha}\right) _{K})=(\pi
_{x},V),x\in U_{\alpha}$. Proposition \ref{Cont-diff} combined with Theorem %
\ref{cont-hol} implies that $(\sigma_{\alpha},H_{\alpha}^{\infty})$ (here $%
H_{\alpha}^{\infty}$ is the space of $C^{\infty}$ vectors with respect to $K$%
). is a Holomorphic family of smooth Fr\'{e}chet representations of $G$
based on $U_{\alpha}$. The isomorphism of categories implies that if $x\in
U_{\alpha}\cap U_{\beta}$ then $H_{\alpha}^{\infty}=H_{\beta}^{\infty}$ and $%
\left( \sigma_{\alpha}\right) _{x}(g)|_{H_{\alpha}^{\infty}}=\left(
\sigma_{\beta}(g)\right) _{x}|_{H_{\beta}^{\infty}}$ for all $g\in G$. Thus
we can define $W=H_{\alpha}^{\infty}$ the common value for all $\alpha$
(since $X$ is connected) \ and if $x\in X$ then $\sigma_{x}(g)=\left( \sigma
_{\alpha}\right) _{x}(g)$ for $\alpha$ such that $x\in U_{\alpha}$.
\end{proof}

This can be interpreted in the following way:

\begin{corollary}
\label{main2}Let $T$ be the inverse functor to the $K$--finite functor $%
\mathcal{HF}(G)\rightarrow\mathcal{H}(\mathfrak{g},K)$ and let $(\pi,V)$ is
an holomorphic family of objects in $\mathcal{H}(\mathfrak{g},K)$ over the
connected complex manifold $X$. If $T((\pi_{x},V))=(\lambda_{x},\overline {%
V_{x}})$ then

1. For all $x,y\in X,\overline{V_{x}}=\overline{V_{y}}$ as subspaces of $%
\prod_{\gamma\in\hat{K}}V(\gamma)$ and as Fr\'{e}chet spaces. Set $\overline{%
V}$ equal to the common value.

2. The map $x,g,v\longmapsto\lambda_{x}(g)v$ is continuous from $X\times
G\times\overline{V}$ to $\bar{V}$, linear in $v$ and $C^{\infty}$ in $g$and
holomorphic in $x$.
\end{corollary}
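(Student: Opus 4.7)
The plan is to derive the corollary by unpacking Theorem \ref{main} in light of the explicit description of the inverse functor $T$ from Section 1; no new analysis is required beyond bookkeeping.

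First I would invoke Theorem \ref{main} to obtain a holomorphic family $(\lambda, W)$ of smooth Fr\'echet representations based on $X$ that globalizes $(\pi, V)$. By the definition of globalization, $W$ is a single Fr\'echet space with $W_K = V$ as a $K$-module, and for each $x \in X$ the action $\lambda_x$ realizes $(\lambda_x, W) \in \mathcal{HF}(G)$ with $K$-finite part $(d\lambda_x, W_K) = (\pi_x, V)$. Because $T$ is the inverse of the $K$-finite functor, and in particular well-defined on objects by the rule $T(U)_K = U$, we conclude that $T(\pi_x, V) = (\lambda_x, W)$ for every $x \in X$. Hence the Fr\'echet space underlying $T(\pi_x, V)$ is the same $W$ for all $x$, and $\lambda_x(g)$ depends holomorphically on $x$ by the conclusion of Theorem \ref{main}.

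For part 1, I would then identify $\overline{V_x} = W$ not merely as abstract Fr\'echet spaces but as subspaces of $\prod_{\gamma \in \hat{K}} V(\gamma)$. Both embed into this product by the $K$-isotypic projections, and since the $K$-module structure of $V$ is independent of $x$, so is the decomposition $V = \bigoplus_\gamma V(\gamma)$ and the embedding map. By the description recalled in Section~1, $\overline{V_x}$ is realized as $(H_{\langle\cdot,\cdot\rangle})^\infty$ for any $\langle\cdot,\cdot\rangle \in \mathcal{I}(V)$ computed with the action $\pi_x$, and Theorem~\ref{main} together with the C-W theorem identifies this common space with $W$. The Fr\'echet topologies also coincide, as both are defined by the seminorms attached to $\mathcal{I}(V)$; hence the common value $\overline{V} := W$ makes sense, establishing~1.

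For part 2, the statement is exactly the definition of a holomorphic family of smooth Fr\'echet representations combined with the identification $W = \overline{V}$ from part 1: the action map $X \times G \times W \to W$ is continuous, linear in $v$, $C^\infty$ in $g$, and holomorphic in $x$. The only real obstacle is the bookkeeping in part~1, namely that the equality $\overline{V_x} = W$ must be checked as equality of subspaces of $\prod_\gamma V(\gamma)$ and not just as abstract Fr\'echet spaces; this reduces to the observation that the $K$-isotypic decomposition is intrinsic to the $K$-module $V$ and is preserved by every morphism in sight, so the canonical inclusions from $W$ and from each $\overline{V_x}$ into $\prod_\gamma V(\gamma)$ are the same map.
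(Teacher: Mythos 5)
Your proposal is correct and supplies exactly the reasoning the paper leaves implicit: the corollary is stated in the text without proof, as a reinterpretation of Theorem \ref{main}, and the content is precisely the bookkeeping you carry out, namely identifying the single Fr\'{e}chet space $W$ produced by Theorem \ref{main} with each $\overline{V_{x}}$ via the C-W isomorphism of categories and then observing that this identification is realized by the common $K$--isotypic embedding into $\prod_{\gamma\in\hat{K}}V(\gamma)$. The one place worth spelling out slightly further is in part~2: Definition \ref{frechet-fam} gives continuity of $\pi:X\times G\rightarrow\mathrm{End}(W)$ only in the strong operator topology, so passing to joint continuity of $(x,g,v)\mapsto\lambda_{x}(g)v$ on $X\times G\times\overline{V}$ uses local equicontinuity of $\{\lambda_{x}(g)\}$ on compacta (Banach--Steinhaus for Fr\'{e}chet spaces), which holds here but is not literally ``exactly the definition.''
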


\section*{Appendices}

\appendix

\section{$G$--$C^{\infty}$ vectors and $K$--$C^{\infty}$ vectors}

Let $G$ be a real reductive group with a fixed maximal compact subgroup $K$
and let $\theta$ be the corresponding Cartan involution. Fix a symmetric
bilinear form $B$ on $Lie(G)$ such that $\left\langle X,Y\right\rangle
=-B(\theta X,Y)$ is positive definite. Let $C$ and $C_{K}$ be the Casimir
operators of $G$ and $K$ respectively corresponding to $B$ and we set $%
\Delta=C-2C_{K}$. We observe that $\Delta=\sum X_{i}^{2}$ for $%
X_{1},...,X_{m}$ an orthonormal basis of $Lie(G)$ relative to $\left\langle
...,...\right\rangle $. As a left invariant operator on $G$, $\Delta$ is an
elliptic and invariant under $K$. Let $(\pi,H)$ be a Hilbert representation
of $G$ and set $V=\left( H^{\infty}\right) _{K}$ . Let $Z$ be the completion
of $V$ relative to the seminorms $q_{l}(v)=\left\Vert \Delta^{l}v\right\Vert
,l=0,1,2,...$ Then since $q_{0}=\left\Vert ...\right\Vert $, $Z$ can be
looked upon as a subspace of $H$. Also $H^{\infty}$ is the completion of $V$
using the seminorms $s_{x}(v)=\left\Vert xv\right\Vert $ with $x\in U(%
\mathfrak{g})$. Thus $Z\supset H^{\infty}$.

\begin{lemma}
\label{roe}$Z=H^{\infty}$. Furthermore, the topology on $H^{\infty}$ is
given by the semi-norms $q_{l}$. That is $H^{\infty}=H^{\infty_{K}}.$
\end{lemma}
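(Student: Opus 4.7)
The plan is to reduce the lemma to (vector-valued) elliptic regularity for $\Delta$ on $G$. First, observe that $Z$ embeds continuously into $H$ (since $q_0 = \|\cdot\|$), and the identity $q_{l}(\Delta v) = q_{l+1}(v)$ shows that $\Delta$ extends to a continuous operator $\Delta : Z \to Z$. In particular, for every $v \in Z$ the iterates $\Delta^l v$ are well defined elements of $H$ for all $l \geq 0$.

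Next I would prove the key inclusion $Z \subseteq H^\infty$. Given $v \in Z$, consider the continuous orbit map $\Phi_v : G \to H$, $g \mapsto \pi(g)v$, and pick a sequence $u_n \in V$ with $q_l(u_n - v) \to 0$ for every $l$. Since each $u_n \in H^\infty$, we have the pointwise identity $\Delta \Phi_{u_n} = \Phi_{\Delta u_n}$, where $\Delta$ is viewed as the left-invariant second-order operator on $G$. Both sides converge in $C^0_{\text{loc}}(G,H)$ (the left side in the distributional sense, the right uniformly on compact sets), so we obtain the distributional identity
\begin{equation*}
\Delta^l \Phi_v = \Phi_{\Delta^l v} \in C^0(G,H), \quad l = 0,1,2,\ldots.
\end{equation*}
Because $\Delta = \sum X_i^2$ with $\langle\cdot,\cdot\rangle$ positive definite, the operator $\Delta$ is elliptic of order $2$ on $G$, and the $H$-valued elliptic regularity theorem (applied locally via a parametrix, which commutes with the Banach-space coefficients) yields $\Phi_v \in C^\infty(G,H)$. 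Therefore $v = \Phi_v(e) \in H^\infty$, and the corresponding local Sobolev estimate gives, for each $u \in U(\mathfrak{g})$ of order $\leq 2l$,
\begin{equation*}
\|d\pi(u)v\| \;\leq\; C_u \bigl( q_l(v) + q_0(v) \bigr).
\end{equation*}

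For the reverse inclusion $H^\infty \subseteq Z$, use the standard fact that $V = (H^\infty)_K$ is dense in $H^\infty$: convolving a smooth vector with characters of irreducible $K$-representations produces $K$-finite smooth vectors that approximate it in every seminorm $s_x$, hence in every $q_l$. Thus each element of $H^\infty$ lies in the $q_l$-completion of $V$, i.e.\ in $Z$. Finally, the $q_l$ are among the $s_x$, so the identity map $H^\infty \to Z$ is continuous; both spaces are Fréchet and I have just shown it is bijective, so by the open mapping theorem it is a topological isomorphism, giving the topological identification $H^\infty = H^{\infty_K}$.

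The main obstacle is the second step: justifying that ordinary elliptic regularity for $\Delta$ transfers to $H$-valued distributions, and verifying that the approximation $u_n \to v$ in $Z$ actually passes to the distributional identity $\Delta^l \Phi_v = \Phi_{\Delta^l v}$. Both are routine once one fixes a local parametrix $Q$ for $\Delta$ (so $Q\Delta = I + R$ with $R$ smoothing) and applies it in each variable separately with $H$-valued coefficients; the rest of the argument is formal.
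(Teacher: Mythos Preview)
Your argument is correct and follows the same overall strategy as the paper: establish the distributional identity $\Delta^{l}(\text{orbit map}) = \text{orbit map of }\Delta^{l}v$ via the approximating sequence in $V$, and then invoke elliptic regularity of $\Delta$. The one genuine difference is in how the regularity step is executed. You work directly with the $H$-valued orbit map $\Phi_v$ and appeal to Banach-space-valued elliptic regularity, justified by a parametrix argument. The paper instead pairs with an arbitrary $w\in H$ to obtain the \emph{scalar} function $h(g)=(\pi(g)v,w)$, applies ordinary (scalar) local Sobolev theory to conclude $h\in C^\infty(G)$, and then quotes Grothendieck's theorem that a weakly $C^\infty$ map from a finite-dimensional manifold into a Hilbert space is strongly $C^\infty$. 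The paper's route is slightly more economical in that it never leaves the classical scalar theory and offloads the passage to vector values onto a single citable black box; your route is more direct but requires the (admittedly routine) verification that the parametrix construction commutes with $H$-valued coefficients, which you correctly flag as the point needing care. For the topological identification, the paper uses the closed graph theorem where you use the open mapping theorem; these are of course equivalent. The inclusion $H^\infty\subset Z$ is taken as already observed in the paper's setup (since each $q_l$ is one of the $s_x$), so your density argument there is fine but not strictly needed.
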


\begin{proof}
We note that the second assertion is a direct consequence of the closed
graph theorem (c.f. \cite{Treves}) and the first assertion. We will now
prove the first assertion. Let $v\in Z\subset H$. We must prove that $v\in
H^{\infty}$. Let $v_{j}\in V$ be a sequence converging to $v$ in the
topology of $Z$. Let $w\in H$ then for all $j$%
\begin{equation*}
\Delta^{k}(\pi(g)v_{j},w)=(\pi(g)\Delta^{k}v_{j},w). 
\end{equation*}
Set $p_{k}(v)=\sum_{j=0}^{k}q_{k}(v)$. Noting that $p_{l}(%
\Delta^{k}v)+p_{k-1}(v)=p_{l+k}(v)$ and $p_{l}(v)\leq p_{l+1}(v)$, we see
that for fixed $k$ the sequence $\{\Delta^{k}v_{j}\}_{j}$ converges to $u_{k}
$ in $Z$.

We assert that the function $g\mapsto(\pi(g)v,w)$ is $C^{\infty}$. Since $%
w\in H$ is arbitrary this would imply that the map $g\mapsto\pi(g)v$ is
weakly $C^{\infty}$. But a weakly $C^{\infty}$ map of a finite dimensional
manifold into a Hilbert space is strongly $C^{\infty}$(c.f. \cite{Groth-sm}%
). This is exactly the statement that $v$ is a $C^{\infty}$ vector. We now
prove the assertion. We first show that if we look upon the continuous
function $h(g)=(\pi(g)v,w)$ as a distribution on $G$ (using the Haar measure
on $G)$ then in the distribution sense 
\begin{equation*}
\Delta^{k}h(g)=(\pi(g)u_{k},w). 
\end{equation*}
Indeed, let $f\in C_{c}^{\infty}(G)$ then%
\begin{equation*}
\int_{G}h(g)\Delta^{k}f(g)dg=\lim_{j\rightarrow\infty}\int_{G}(\pi
(g)v_{j},w)\Delta^{k}f(g)dg= 
\end{equation*}%
\begin{equation*}
\lim_{j\rightarrow\infty}\int_{G}\Delta^{k}(\pi(g)v_{j},w)f(g)dg=\lim
_{j\rightarrow\infty}\int_{G}(\pi(g)\Delta^{k}v_{j},w)f(g)dg= 
\end{equation*}%
\begin{equation*}
\int_{G}(\pi(g)u_{k},w)f(g)dg 
\end{equation*}
as asserted. Since $\Delta$ is elliptic, local Sobelev theory (c.f. [F,
Chapter 6]) implies that $h\in C^{\infty}(G)$.
\end{proof}

\begin{proposition}
\label{C-K}If $(\pi,H)$ is an admissible Hilbert representation of $G$ such
that there exists a polynomial%
\begin{equation*}
f(x)=x^{m}-\sum_{j=0}^{m-1}c_{j}x^{j}
\end{equation*}
such that $f(C)=0$ on $H^{\infty}$ then the topology of $H^{\infty}$ is
given by the semi-norms $p_{l}(v)=\left\Vert (I+C_{K})^{l}v\right\Vert
,l=0,1,2,...$That is, the $K-C^{\infty}$ vectors, $H^{\infty_{K}}$ are the
same as the $G-C^{\infty}$ vectors, $H^{\infty}$.
\end{proposition}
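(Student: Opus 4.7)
By Lemma \ref{roe}, $H^{\infty}$ is a Fr\'echet space whose topology is given by $\{q_l(v) = \|\Delta^l v\|\}_{l \geq 0}$, where $\Delta = C - 2C_K$. Because $(1+C_K)^l \in U(\mathfrak{g})$, each seminorm $p_l$ is dominated by some $q_{l'}$, so the natural inclusion $H^{\infty} \hookrightarrow H^{\infty_K}$ is continuous; and $H^{\infty_K}$ is itself a Fr\'echet space under $\{p_l\}$. The open mapping theorem therefore reduces the proposition to the set-theoretic inclusion $H^{\infty_K} \subseteq H^{\infty}$, which, by Lemma \ref{roe}, amounts to showing $\Delta^l v \in H$ for every $v \in H^{\infty_K}$ and every $l \geq 0$.

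To establish this, factor $f(x) = \prod_i(x - \lambda_i)^{m_i}$. Since $f(C) = 0$ on the $K$-finite vectors $V := (H^{\infty})_K$, we get a $(\mathfrak{g}, K)$-module decomposition $V = \bigoplus_i V_{\lambda_i}$ with $V_{\lambda_i} = \ker(C - \lambda_i)^{m_i}|_V$. Setting $N_i = C - \lambda_i$, we have $N_i^{m_i} = 0$ on $V_{\lambda_i}$, and $N_i$ commutes with $C_K$ since $C$ is central. Hence on $V_{\lambda_i}$,
\begin{equation*}
\Delta^l = \bigl((\lambda_i - 2C_K) + N_i\bigr)^l = \sum_{k=0}^{m_i - 1}\binom{l}{k}\,N_i^k\,(\lambda_i - 2C_K)^{l-k}.
\end{equation*}
Let $c(\gamma)$ denote the eigenvalue of $C_K$ on $V(\gamma)$. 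Restricting to $v_\gamma \in V(\gamma)\cap V_{\lambda_i}$, on which $(\lambda_i - 2C_K)$ acts as the scalar $\mu_\gamma := \lambda_i - 2c(\gamma)$ of size $O(1 + c(\gamma))$, we obtain
\begin{equation*}
\|\Delta^l v_\gamma\| \leq \sum_{k=0}^{m_i - 1}\binom{l}{k}\,|\mu_\gamma|^{l-k}\,\bigl\|N_i|_{V(\gamma)\cap V_{\lambda_i}}\bigr\|^{k}\,\|v_\gamma\|.
\end{equation*}

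The crux is a uniform polynomial bound $\bigl\|N_i|_{V(\gamma)\cap V_{\lambda_i}}\bigr\| \leq C_{N_i}(1+c(\gamma))^{L_0}$. Because $N_i = C - \lambda_i$ lies in $U^{2}(\mathfrak{g})$ and commutes with $K$, it acts through the finite-dimensional multiplicity space $\mathrm{Hom}_K(F_\gamma, V(\gamma))$; combining the bounded Jordan-block size of $N_i$ ($\leq m_i$) with the polynomial growth in $c(\gamma)$ of the $K$-multiplicities afforded by admissibility yields the required estimate. Substituting gives $\|\Delta^l v_\gamma\| \leq M_l(1+c(\gamma))^{L_l}\|v_\gamma\|$ with $L_l$ independent of $\gamma$. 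Since $v \in H^{\infty_K}$ forces $\sum_\gamma (1+c(\gamma))^{2L}\|v_\gamma\|^2 < \infty$ for every $L$, summing the squared estimates over $\gamma$ and over $i$ gives $\Delta^l v \in H$ for every $l$, completing the proof. The principal obstacle is precisely this polynomial bound on the nilpotent part: while finiteness on each fixed $\gamma$ is immediate, controlling the growth in $c(\gamma)$ is where admissibility must be combined with the differential-operator structure of $C$.
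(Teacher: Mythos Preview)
Your reduction via Lemma \ref{roe} and the open mapping theorem is sound, and the binomial expansion of $\Delta^l$ on each generalized eigenspace is correct. The gap is exactly where you flag it: the asserted bound $\|N_i|_{V(\gamma)\cap V_{\lambda_i}}\| \leq C_{N_i}(1+c(\gamma))^{L_0}$ is not established, and the justification you offer does not work. First, admissibility alone does not give polynomial growth of $K$-multiplicities; that requires finite generation, which is not among the hypotheses (only $f(C)=0$ is assumed, and $C$ does not control all of $Z(\mathfrak{g})$). Second, and more seriously, even granting polynomial multiplicity growth, the operator norm of a nilpotent endomorphism of an $n$-dimensional space is not bounded by any function of $n$ and the nilpotency index: on the multiplicity space $\mathrm{Hom}_K(F_\gamma,V_{\lambda_i})$ the operator $N_i$ could in principle act as $\begin{pmatrix}0 & a_\gamma\\ 0 & 0\end{pmatrix}$ with $a_\gamma$ arbitrary, consistently with $N_i^2=0$. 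Nothing in your argument rules this out. The phrase ``differential-operator structure of $C$'' gestures at the missing ingredient but does not supply it; any attempt to bound $\|Cv_\gamma\|$ via $\|\Delta v_\gamma\|$ and vice versa is circular, since the two differ by $2c(\gamma)\|v_\gamma\|$ without bounding either.

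The paper sidesteps this entirely. It argues by induction on $m$: the case $m=1$ is your computation with $N_i=0$, and the inductive step picks a genuine eigenvalue $c$ of $C$ on $V$, lets $H_1$ be the closure of $V_c$, and uses the exactness of the functor $U\mapsto U^\infty$ (Warner, Prop.\ 4.4.1.11) to obtain a commuting ladder between the short exact sequences of $G$-smooth and $K$-smooth vectors for $H_1\hookrightarrow H\twoheadrightarrow H/H_1$. The outer vertical maps are isomorphisms by the base case and the inductive hypothesis (since $g(x)=f(x)/(x-c)$ kills $C$ on $(H/H_1)^\infty$), so the five lemma finishes. This structural route never needs a norm estimate on the nilpotent part.
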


\begin{proof}
This result will be proved by induction on $m$. If $m=1$ then $C$ acts by $%
c=c_{0}$ on $H^{\infty}$. Note that $\Delta=C-2C_{K}$. So if $v\in H^{\infty
}$%
\begin{equation*}
\left\Vert \Delta^{k}v\right\Vert =\left\Vert \sum_{j=0}^{k}(-2)^{j}\binom {k%
}{j}C^{k-j}C_{K}^{j}v\right\Vert _{{}}\leq 
\end{equation*}%
\begin{equation*}
\sum_{j=0}^{k}(2)^{j}\binom{k}{j}|c|^{\substack{  \\ k-j}}\left\Vert
C_{K}^{j}v\right\Vert \leq\sum_{j=0}^{k}(2)^{j}\binom{k}{j}%
|c|^{k-j}\left\Vert (1+C_{K})^{j}v\right\Vert . 
\end{equation*}
Now assume the result if the degree is $m-1\geq1$. Let $H^{\omega}$ denote
the space of analytic vectors in $H^{\infty}$. then the $K$--finite vectors $%
V=H_{K}$ are contained in $H^{\omega}$ \ If $c\in\mathbb{C}$ and $V_{c}$ $%
=\{v\in V|Cv=cv\}\neq0$ then let $H_{1}$ be the Hilbert space completion of $%
V_{c}$ then $H_{1}$ is $G$--invariant and $\left( H_{1}\right) _{K}=V_{c}$
and $\left( H/H_{1}\right) _{K}=H_{K}/V_{c}$. The first part of the proof
implies that the seminorms $p_{l}$ define the topology on $H_{1}^{\infty}$.
The correspondence $U\rightarrow U^{\infty}$ is an exact functor from the
category of strongly continuous representations of $G$ to the category of
smooth Fr\'{e}chet modules (\cite{Warner1}, Proposition 4.4.1.11 p. 260).
Setting 
\begin{equation*}
g(x)=\frac{f(x)}{x-c}
\end{equation*}
we have $g(C)$ is zero on $\left( H/H_{1}\right) ^{\infty}$ (since it is $0$
on $V/V_{c}$). Thus we have the commutative diagram%
\begin{equation*}
\begin{array}{ccccccccc}
0 & \rightarrow & H_{1}^{\infty} & \rightarrow & H^{\infty} & \rightarrow & 
(H/H_{1})^{\infty} & \rightarrow & 0 \\ 
&  & \downarrow &  & \downarrow &  & \downarrow &  &  \\ 
0 & \rightarrow & H_{1}^{\infty_{K}} & \rightarrow & H^{\infty_{K}} & 
\rightarrow & \left( H/H_{1}\right) ^{\infty_{K}} & \rightarrow & 0%
\end{array}
\end{equation*}
with the right-most and left-most vertical arrows isomorphisms. This implies
that the middle vertical arrow is also an isomorphism completing the
induction.
\end{proof}

\begin{corollary}
If $(\pi,H)$ is an admissible finitely generated Hilbert representation of $G
$ then the $K$--$C^{\infty}$ vectors are the same as the $G$--$C^{\infty}$
vectors.
\end{corollary}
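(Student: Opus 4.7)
The plan is to verify the hypothesis of Proposition \ref{C-K}---namely, that the Casimir satisfies some monic polynomial equation on $H^{\infty}$---and then invoke that proposition directly. The entire task thus reduces to producing a monic $f(x)=x^{m}-\sum_{j=0}^{m-1}c_{j}x^{j}$ with $f(C)=0$ on $H^{\infty}$.

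By assumption $V:=H_{K}$ is an admissible, finitely generated $(\mathfrak{g},K)$-module, i.e., an object of $\mathcal{H}(\mathfrak{g},K)$. Harish-Chandra's classical $Z(\mathfrak{g})$-finiteness theorem therefore supplies a monic $f$ with $f(C)v=0$ for every $v\in V$ (indeed $\mathrm{Ann}_{Z(\mathfrak{g})}(V)$ has finite codimension, so $C$ satisfies a polynomial relation on $V$). The remaining step is to promote this identity from $V$ to all of $H^{\infty}$.

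The cleanest route is closedness. The operator $d\pi(C)$, and hence $f(d\pi(C))$, is a left-invariant (elliptic) differential operator, and as an unbounded operator on the Hilbert space $H$ with domain containing $H^{\infty}$ it is closed. Given $v\in H^{\infty}$, I would choose a sequence $v_{n}\in H_{K}$ converging to $v$ in the norm of $H$---such a sequence exists because the $K$-finite vectors are dense in any strongly continuous representation of $K$, and admissibility places them inside $H^{\infty}$. Since $f(C)v_{n}=0$ for all $n$ and $v_{n}\to v$, closedness forces $f(C)v=0$. With this hypothesis now verified, Proposition \ref{C-K} applies and delivers $H^{\infty}=H^{\infty_{K}}$.

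The only subtle point in the argument is the closedness step; alternatively one could appeal to the (standard) density of $H_{K}$ inside $H^{\infty}$ for the latter's Fréchet topology, combined with the continuity of $f(C):H^{\infty}\to H^{\infty}$, to draw the same conclusion. Either way the corollary is a direct consequence of Proposition \ref{C-K} together with Harish-Chandra's $Z(\mathfrak{g})$-finiteness theorem.
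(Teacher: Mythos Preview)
Your proposal is correct and follows the same overall strategy as the paper: produce a monic polynomial $f$ with $f(C)=0$ on $H^{\infty}$ and then invoke Proposition \ref{C-K}. The only difference is in how the polynomial is obtained. You appeal to Harish-Chandra's $Z(\mathfrak{g})$-finiteness theorem, whereas the paper gives a direct, more elementary construction: since $(\pi,H)$ is finitely generated there is a finite set $F\subset\hat{K}$ with $U(\mathfrak{g})\sum_{\gamma\in F}H(\gamma)=H_{K}$; the Casimir $C$ preserves the finite-dimensional space $\sum_{\gamma\in F}H(\gamma)$, so one takes $f$ to be its minimal polynomial there, and centrality of $C$ then forces $f(C)=0$ on all of $H_{K}$. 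This avoids importing the full finiteness theorem. For the passage from $H_{K}$ to $H^{\infty}$, your second (density plus continuity) argument is the right one and is exactly what the paper's ``hence on $H^{\infty}$'' encodes; your first argument via closedness is shakier, since $f(d\pi(C))$ with domain $H^{\infty}$ is closable but not obviously closed.
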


\begin{proof}
There exists a finite subset $F\subset\hat{K}$ such that $U(\mathfrak{g}%
)\sum_{\gamma\in F}H(\gamma)$. Clearly, $C\sum_{\gamma\in F}H(\gamma
)\subset\sum_{\gamma\in F}H(\gamma)$. Let $p(x)$ be the minimal polynomial
of $C_{|\sum_{\gamma\in F}H(\gamma)}$. Then $p(C)=0$ on $H_{K}$ and hence on 
$H^{\infty}$.
\end{proof}

\section{Hilbert families}

Let $G$ be a locally compact topological group and let $X$ be a locally
compact metric space. All Hilbert spaces in this appendix (indeed, in this
paper) will be separable.

\begin{definition}
\label{Hilbert-Fam}A \ continuous family of Hilbert representations of $G$
over $X$ is a pair $(\sigma,H)$ with $H$ a Hilbert space and $\sigma:X\times
G\rightarrow GL(H)$ (the continuos invertible operators on $H$ with the
strong operator topology) a continuous map such that $\sigma_{x}(g)=%
\sigma(x,g)$ defines a representation of $G$ for every $x\in G$.
\end{definition}

The following lemma is Lemma 1.1.3 in \cite{RRG1-11} taking into account
dependence on parameters. The proof is essentially the same using the local
compactness of $X.$

\begin{lemma}
Let $X$ be a locally compact metric space and let $H$ be a Hilbert space.
Assume that for each $x\in X$, $\pi_{x}:G\rightarrow GL(H)$ (bounded
invertible operators such that

1) If $\omega\subset X$ and $\Omega\subset G$ are compact subsets of $X$ and
of $G$ respectively then there exists a constant $C_{\omega,\Omega}$ such
that $\left\Vert \pi_{x}(g)\right\Vert \leq C_{\omega,\Omega}$ \ for $x\in
\omega,g\in\Omega.$

2) The map $x,g\rightarrow\left\langle \pi_{x}(g)v,w\right\rangle $ is
continuous for all $v,w\in H$.

Then $(\pi,H)$ is a continuous family of representations of $G$ based on $X$
and conversely if $(\pi,H)$ is a continuous family of Hilbert
representations then 1) and 2) \ are satisfied.
\end{lemma}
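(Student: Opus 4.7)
The converse direction is essentially automatic. Strong continuity immediately yields condition 2). For condition 1), on a compact product $\omega \times \Omega$ the orbit $\{\sigma(x,g) v : (x,g) \in \omega \times \Omega\}$ is compact for each fixed $v$ (hence bounded), so the family of operators is pointwise bounded, and the Banach--Steinhaus theorem gives the uniform operator-norm bound $C_{\omega,\Omega}$.

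For the forward direction, my plan is to adapt Wallach's proof of the unparametrized Lemma~1.1.3 of \cite{RRG1-11}, using the local compactness of $X$ to keep all estimates uniform as $x$ varies over compact neighborhoods. Let $S \subset H$ be the set of $v$ for which $(x,g) \mapsto \sigma(x,g) v$ is continuous. A standard $\varepsilon/3$ argument, in which one uses the bound from 1) to control $\sigma(x,g)(v - v_k)$ on a compact neighborhood of an arbitrary base point, shows that $S$ is a closed linear subspace of $H$. My goal is to exhibit a dense subspace of $H$ contained in $S$. For each fixed $x_0 \in X$, conditions 1) and 2) specialize to the hypotheses of the unparametrized Lemma~1.1.3 for $\sigma(x_0,\cdot)$, so $g \mapsto \sigma(x_0,g)$ is already known to be a strongly continuous representation of $G$. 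In particular, for $\phi \in C_c(G)$ and $v \in H$ the Bochner integral $\sigma(x_0, \phi) v := \int_G \sigma(x_0, g) v \, \phi(g)\, dg$ is well defined, and the usual orthogonality argument (if $w \perp \sigma(x_0, \phi) v$ for all $\phi, v$, then $\langle \sigma(x_0, g) v, w\rangle \equiv 0$ by continuity, so evaluation at $g = e$ forces $w = 0$) shows that these vectors span a dense subspace of $H$.

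The main step is to show that every $u = \sigma(x_0, \phi) v$ lies in $S$, i.e.\ that $(x,g) \mapsto \sigma(x,g) u$ is jointly continuous at an arbitrary $(x_0, g_0)$. The standard decomposition
\[
\sigma(x_n, g_n) u - \sigma(x_0, g_0) u = \bigl[\sigma(x_n, g_n) u - \sigma(x_n, g_0) u\bigr] + \bigl[\sigma(x_n, g_0) u - \sigma(x_0, g_0) u\bigr]
\]
splits the task into two parts. The first bracket is controlled by writing $\sigma(x_n, h) u$ as a Bochner integral against the translate $L_h \phi$ (using an analogue of the identity $\sigma(x_0, h) u = \sigma(x_0, L_h \phi) v$), then combining the $L^1$-continuity of $h \mapsto L_h \phi$ with the uniform operator bound from 1) on a compact neighborhood of $(x_0, g_0)$ to obtain norm continuity in $g$ that is uniform over the $x_n$. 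The second bracket requires upgrading the weak continuity provided by 2) to norm continuity in $x$. Here the specific structure of $u$ as a Bochner integral is essential: rewriting the integrand using the group identity $\sigma(x_n, g_0) \sigma(x_0, h) = \sigma(x_n, g_0 h)\sigma(x_n, h^{-1}) \sigma(x_0, h)$ and invoking $\sigma(x_n, h^{-1})\sigma(x_n, h) = I$ produces an estimate in which the uniform bound of 1) on the compact support of $\phi$ dominates the integrand and the joint weak continuity from 2) gives pointwise convergence, yielding the desired norm convergence of the Bochner integral.

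The main obstacle is precisely this last upgrade---passing from weak convergence to norm convergence of $\sigma(x_n, g_0) u$ as $x_n \to x_0$---and the local compactness of $X$ enters at exactly this point, since it is what makes the dominating operator bound from 1) available uniformly on compact neighborhoods. Once both brackets are controlled, $S$ is a closed subspace of $H$ containing a dense subspace, hence $S = H$, and $\sigma$ is strongly continuous.
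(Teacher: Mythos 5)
Your converse direction is correct: strong continuity of $(x,g)\mapsto\pi_x(g)v$ gives 2) directly, and the orbit $\{\pi_x(g)v : (x,g)\in\omega\times\Omega\}$ is compact, hence bounded, for each $v$, so Banach--Steinhaus gives 1). The paper offers no written proof of the forward direction --- only the remark that it is ``essentially the same'' as Lemma~1.1.3 of \cite{RRG1-11} ``using the local compactness of $X$'' --- so your plan to parametrize that argument is the intended route. But both brackets in your execution have genuine gaps, and they come from the same source: the smoothed vectors $u = \sigma(x_0,\phi)v$ are adapted only to the representation at the base parameter $x_0$, and they do not interact usefully with $\sigma_{x_n}$ for $x_n\neq x_0$.

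For the first bracket, the claimed ``analogue'' of $\sigma(x_0,h)u = \sigma(x_0,L_h\phi)v$ does not exist. That identity uses the group law inside a single representation, $\sigma(x_0,h)\sigma(x_0,g) = \sigma(x_0,hg)$, and the mixed product $\sigma(x_n,h)\sigma(x_0,g)$ does \emph{not} collapse to $\sigma(x_n,hg)$. You could repair this by replacing $u$ with $u_n := \sigma(x_n,\phi)v$, for which the translation identity does hold at parameter $x_n$, but then you need $\|u_n - u\|\to 0$, which is exactly the content of your second bracket. For the second bracket, the rewrite $\sigma(x_n,g_0)\sigma(x_0,h) = \sigma(x_n,g_0 h)\sigma(x_n,h^{-1})\sigma(x_0,h)$ is a tautology (the first two factors on the right multiply back to $\sigma(x_n,g_0)$, so nothing has changed), and the decisive step --- that 2) gives pointwise convergence of the integrand and domination then yields norm convergence of the Bochner integral --- confuses weak with strong convergence. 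Dominated convergence for Bochner integrals requires the integrand to converge pointwise in \emph{norm}; condition 2) gives only weak convergence of $\sigma(x_n,g_0)w$ to $\sigma(x_0,g_0)w$ for each fixed $w$, and a uniformly bounded weakly convergent family need not be norm convergent (think of a constant-in-$h$ integrand equal to $e_n$ on $\ell^2$). Worse, if you try to assert instead that $\sigma(x_n,g_0)w\to\sigma(x_0,g_0)w$ in norm for every $w\in H$, that is precisely the conclusion of the lemma, so the argument becomes circular. This parameter-mixing is not a cosmetic technicality; it is exactly the point at which the one-parameter proof of Lemma~1.1.3 does \emph{not} parametrize naively, and you should examine carefully whether the forward implication even holds as stated without a supplementary hypothesis (for instance unitarity, where $\|\sigma(x,g)v\|=\|v\|$ converts joint weak continuity into joint strong continuity automatically).
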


An immediate corollary is

\begin{corollary}
\label{Conj-dual}Let $(\pi,H)$ be an admissible, continuous family of
Hilbert representations of $G$ based on the locally compact metric space $X$%
. Set for each $x\in X$, $\hat{\pi}_{x}(g)=\pi_{x}(g^{-1})^{\ast}$ then $(%
\hat{\pi},H)$ is a continuous, admissible family of Hilbert representations
of of $G$ based on $X$.
\end{corollary}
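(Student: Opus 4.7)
The plan is to verify the two conditions of the preceding lemma for the family $(\hat\pi,H)$, plus check that admissibility is preserved. Both of the lemma's conditions reduce to the corresponding statements for $(\pi,H)$ after exploiting the identity
\[
\langle\hat\pi_x(g)v,w\rangle = \langle\pi_x(g^{-1})^{\ast}v,w\rangle = \langle v,\pi_x(g^{-1})w\rangle = \overline{\langle\pi_x(g^{-1})w,v\rangle},
\]
and the fact that the inversion $g\mapsto g^{-1}$ is a homeomorphism of $G$ sending compact sets to compact sets.

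First I would check local boundedness. For compact $\omega\subset X$ and $\Omega\subset G$, the set $\Omega^{-1}$ is compact, so by hypothesis there is $C_{\omega,\Omega^{-1}}$ with $\|\pi_x(g^{-1})\|\le C_{\omega,\Omega^{-1}}$ for $x\in\omega$, $g\in\Omega$. Since the Hilbert-space adjoint is an isometry of $B(H)$, this gives $\|\hat\pi_x(g)\|=\|\pi_x(g^{-1})^{\ast}\|=\|\pi_x(g^{-1})\|\le C_{\omega,\Omega^{-1}}$, which is condition 1). Next I would check weak continuity: by the displayed identity, $(x,g)\mapsto\langle\hat\pi_x(g)v,w\rangle$ is the complex conjugate of $(x,g)\mapsto\langle\pi_x(g^{-1})w,v\rangle$, which is continuous as the composition of the continuous inversion, the hypothesis on $(\pi,H)$, and complex conjugation. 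That yields condition 2), so $(\hat\pi,H)$ is a continuous family of Hilbert representations.

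It remains to verify admissibility of each $\hat\pi_x|_K$. Fix $x\in X$ and write $\rho=\pi_x|_K$. Since $K$ is compact, averaging produces a $K$-invariant inner product $\langle\cdot,\cdot\rangle_K$ on $H$ equivalent to the original one; let $S$ be the bounded positive invertible operator such that $\langle v,w\rangle_K=\langle Sv,w\rangle$. Unitarity of $\rho$ with respect to $\langle\cdot,\cdot\rangle_K$ gives $\rho(k)^{\ast}S\rho(k)=S$, equivalently $\rho(k^{-1})^{\ast}=S\rho(k)S^{-1}$. Thus $\hat\pi_x(k)=S\,\pi_x(k)\,S^{-1}$ on $H$, so $\hat\pi_x|_K$ is similar to $\pi_x|_K$ via the bounded invertible intertwiner $S$. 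In particular the $K$-isotypic decomposition of $\hat\pi_x|_K$ has the same finite multiplicities as that of $\pi_x|_K$, so $(\hat\pi,H)$ is admissible.

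No step is really an obstacle here; the only mild subtlety is that the adjoint operation $T\mapsto T^{\ast}$ is not continuous in the strong operator topology, which is why one must pass through the sesquilinear form $(v,w)\mapsto\langle\pi_x(g^{-1})w,v\rangle$ rather than try to invoke strong continuity of $\hat\pi_x(g)$ directly; the characterization provided by the preceding lemma is precisely tailored to bypass this point.
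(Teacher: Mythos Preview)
Your proof is correct and follows exactly the approach the paper intends: the paper simply declares the result an ``immediate corollary'' of the preceding lemma without writing out any details, and your argument supplies precisely those details by verifying conditions 1) and 2) of that lemma for $(\hat\pi,H)$ and then checking admissibility via the averaging trick. Your closing remark about the failure of strong continuity for $T\mapsto T^{\ast}$ nicely explains why the weak characterization in the lemma is the right tool here.
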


\section{\label{norms}Norms}

Let $\left\Vert g\right\Vert $ be a norm on $G,$ that is a continuous
function from \thinspace$G$ to $\mathbb{R}_{>0}$ (the positive real numbers)
such that

1. $\left\Vert k_{1}gk_{2}\right\Vert =\left\Vert g\right\Vert
,k_{1},k_{2}\in K,g\in G$,

2. $\left\Vert xy\right\Vert \leq\left\Vert x\right\Vert \left\Vert
y\right\Vert ,x,y\in G$,

3. The sets $\left\Vert g\right\Vert \leq r<\infty$ are compact.

4. If $X\in\mathfrak{p}$ then if $t\geq0$ then $\log\left\Vert \exp
tX\right\Vert =t\log\left\Vert \exp X\right\Vert .$

If $(\sigma,V)$ is a finite dimensional representation of $G$ with compact
kernel. Assume also that $\left\langle ...,...\right\rangle $ is an inner
product on $V$ that is $K$--invariant and is such that the elements $%
\sigma(\exp(X))$ are self adjoint for $X\in\mathfrak{p}_{o}$. If $\left\Vert
\sigma(g)\right\Vert $is the operator norm of $\sigma(g)$ then $\left\Vert
g\right\Vert =\left\Vert \sigma(g)\right\Vert $ is a norm on $G$. Taking the
representation on $V\oplus V$ given by%
\begin{equation*}
\left[ 
\begin{array}{cc}
\sigma(g) &  \\ 
& \sigma(g^{-1})^{\ast}%
\end{array}
\right] 
\end{equation*}
then we may (and will) assume in addition

5. $\left\Vert g\right\Vert =\left\Vert g^{-1}\right\Vert $.

Note that 5. implies that $\left\Vert g\right\Vert \geq1$.

Using the same proof as Lemma 2.A.2.1 in \cite{RRG1-11}(which we give for
the sake of completeness) one can prove

\begin{lemma}
\label{loc-umod-growth}If $(\pi,H)$ is a continuous family of Hilbert
representations over $X$ and if $\omega$ is a compact subset of $X$ then
there exist constants $C_{\omega},r_{\omega}$ such that%
\begin{equation*}
\left\Vert \pi_{x}(g)\right\Vert \leq C_{\omega}\left\Vert g\right\Vert
^{r_{\omega}}. 
\end{equation*}
\end{lemma}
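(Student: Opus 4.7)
The plan is to run the classical moderate-growth argument (as in Lemma 2.A.2.1 of \cite{RRG1-11}) for a single representation, while keeping track of the parameter $x\in\omega$ via the joint continuity built into Definition \ref{Hilbert-Fam}. The starting input is the criterion just recalled: for any compact $\omega\subset X$ and any compact $\Omega\subset G$ there is a constant $C_{\omega,\Omega}$ with $\|\pi_x(g)\|\le C_{\omega,\Omega}$ for $(x,g)\in\omega\times\Omega$. This uniform-boundedness fact is the only place where the parameter intervenes; everything else is a Cartan-decomposition estimate that is entirely geometric on $G$.

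First I would reduce to elements of $\exp(\mathfrak{p}_o)$ using the Cartan decomposition $G=K\exp(\mathfrak{p}_o)K$ and property 1 of the norm, combined with the uniform bound $\sup_{x\in\omega,\,k\in K}\|\pi_x(k)\|\le M_\omega^{K}<\infty$ coming from compactness of $K$ together with the lemma above. Thus it suffices to find constants $C_\omega,r_\omega$ with
\begin{equation*}
\|\pi_x(\exp X)\|\le C_\omega\,\|\exp X\|^{r_\omega},\qquad x\in\omega,\ X\in\mathfrak{p}_o.
\end{equation*}

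Next, I would choose a small neighborhood. Using properties 3 and 4 of the norm, fix a radius $r_0>1$ so that the set $B=\{X\in\mathfrak{p}_o:\|\exp X\|\le r_0\}$ has compact image $\exp(B)\subset G$. Applying condition 1) to $\Omega=\exp(B)$ yields a constant $M_\omega\ge1$ with $\|\pi_x(\exp Y)\|\le M_\omega$ for all $x\in\omega$ and $Y\in B$. Now, given arbitrary $X\in\mathfrak{p}_o$, use property 4 to write $\|\exp(X/n)\|=\|\exp X\|^{1/n}$ and pick the smallest positive integer $n$ with $\|\exp X\|^{1/n}\le r_0$, i.e.\ $n\le 1+\log\|\exp X\|/\log r_0$. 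Since $\pi_x$ is a representation, $\pi_x(\exp X)=\pi_x(\exp(X/n))^n$, hence
\begin{equation*}
\|\pi_x(\exp X)\|\le M_\omega^{\,n}\le M_\omega\cdot\|\exp X\|^{\log M_\omega/\log r_0}.
\end{equation*}
Combining this with the $K$-piece bound through the Cartan decomposition yields the required inequality with $r_\omega=\log M_\omega/\log r_0$ and $C_\omega=M_\omega\,(M_\omega^K)^2$.

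The only thing to be careful about is the reduction step handling the $K$-factors, where one uses property 1 (bi-$K$-invariance of $\|\cdot\|$) to match the norm estimate with the Cartan decomposition. There is no genuine obstacle: the parameter $x$ appears only through the two uniform bounds $M_\omega$ and $M_\omega^K$, both of which are finite by compactness of $\omega$, of $K$, and of $\exp(B)$, together with strong-operator continuity of $(x,g)\mapsto\pi_x(g)$. Thus the argument is structurally identical to the single-representation case, with $M_\omega$ in place of the usual constant.
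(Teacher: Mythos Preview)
Your proof is correct and follows essentially the same route as the paper: bound $\|\pi_x(\cdot)\|$ uniformly on $\omega$ times a compact neighborhood in $G$ via uniform boundedness, then use property~4 of the norm to subdivide $X\in\mathfrak{p}_o$ into pieces lying in that neighborhood and exponentiate the resulting bound. The only cosmetic differences are that the paper uses the polar decomposition $g=k\exp X$ (one $K$--factor) rather than $K\exp(\mathfrak{p}_o)K$, and takes its compact set to be the unit ball $B_1=\{g:\|g\|\le 1\}$ rather than separating the $K$--piece from a ball in $\exp(\mathfrak{p}_o)$; neither change affects the argument.
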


\begin{proof}
Let 
\begin{equation*}
B_{1}=\{g\in G|\left\Vert g\right\Vert \leq1\}. 
\end{equation*}
If $v\in H$ and $(x,g)\in\omega\times B_{1}$ then $\sup\left\Vert \pi
_{x}(g)v\right\Vert <\infty$ by strong continuity. The principle of uniform
boundedness (c.f. \cite{ReedSimon},III.9,p.81)\ implies that there exists a
constant, $R$, such that $\left\Vert \pi_{x}(g)\right\Vert \leq R$ for $%
(x,g)\in\omega\times B_{1}$. Let $r=\log R$. In particular if $k\in K$ then 
\begin{equation*}
\left\Vert \pi_{x}(kg)\right\Vert \leq\left\Vert \pi_{x}(k)\right\Vert
\left\Vert \pi_{x}(g)\right\Vert \leq R\left\Vert \pi_{x}(g)\right\Vert . 
\end{equation*}
Also, 
\begin{equation*}
\left\Vert \pi_{x}(g)\right\Vert =\left\Vert
\pi_{x}(k^{-1})\pi_{x}(kg)\right\Vert \leq R\left\Vert
\pi_{x}(kg)\right\Vert . 
\end{equation*}
Thus for all $k\in K,g\in G$%
\begin{equation*}
R^{-1}\left\Vert \pi_{x}(g)\right\Vert \leq\left\Vert \pi_{x}(kg)\right\Vert
\leq R\left\Vert \pi_{x}(g)\right\Vert . 
\end{equation*}
Let $X\in\mathfrak{p}$, $X\neq0$ and let $j$ be such that 
\begin{equation*}
j<\log\left\Vert \exp X\right\Vert \leq j+1 
\end{equation*}
then%
\begin{equation*}
\log\left\Vert \pi_{x}(\exp X)\right\Vert \leq\log\left\Vert \pi_{x}(\exp(%
\frac{X}{j+1})\right\Vert ^{j+1}\leq r(j+1)\leq r+r\log\left\Vert \exp
X\right\Vert . 
\end{equation*}
Thus%
\begin{equation*}
\left\Vert \pi_{x}\left( \exp X\right) \right\Vert \leq R\left\Vert \exp
X\right\Vert ^{r},X\in\mathfrak{p.}
\end{equation*}
If $g\in G$ then $g=k\exp X$ with $k\in K$ and $X\in\mathfrak{p}$ so%
\begin{equation*}
\left\Vert \pi_{x}(g)\right\Vert =\left\Vert \pi_{x}(k\exp X)\right\Vert
\leq R^{2}\left\Vert \exp X\right\Vert ^{r}=R^{2}\left\Vert g\right\Vert
^{r}. 
\end{equation*}
Take $C_{\omega}=R^{2}$ and $r_{\omega}=r$.
\end{proof}

\section{\label{an-gK}Continuous and analytic families of $(\mathfrak{g},K)$
modules.}

Let $G$ be a reductive group with fixed maximal compact subgroup, $K$. In
this section $X$ will denote a connected, paracompact real analytic or
complex manifold.

\begin{definition}
If $V$ is a vector space over $\mathbb{C}$ then a continuous,real analytic
or holomorphic function from $X$ to $V$ is a map $f:X\rightarrow V$ such
that for each $x\in X$ there exists, $U$, an open neighborhood of $x$ in $X$
such that the following two conditions are satisfied:

1. $\dim\mathrm{span}_{\mathbb{C}}\{f(x)|x\in U\}<\infty$.

2. $f:U\rightarrow\mathrm{span}_{\mathbb{C}}\{f(x)|x\in U\}$ is respectively
continuous, real analytic or continuous.
\end{definition}

\begin{definition}
A holomorphic,analytic or continuous family of admissible $(\mathfrak{g},K)$%
--modules over $X$ is a pair, $(\mu,V)$, of an admissible $(\mathfrak{k},K)$%
--module, $V$, and 
\begin{equation*}
\mu:X\times U(\mathfrak{g})\rightarrow\mathrm{End}(V) 
\end{equation*}
such that $x\mapsto\mu(x,y)v$ is respectively holomorphic, analytic or
continuous for all $y\in U(\mathfrak{g})$, $v\in V$ and if we set $\mu
_{x}(y)=\mu(x,y)$ for $y\in U(\mathfrak{g})$ then $(\mu_{x},V)$ is an
admissible $(\mathfrak{g},K)$--module. It will be called a family of objects
in $\mathcal{H}(\mathfrak{g},K)$ if each $(\mu_{x},V)$ is finitely generated.
\end{definition}

\begin{definition}
\label{fam-hom}If $(\lambda,V)$ and $(\mu,W)$ are analytic or continuous
families of objects in $\mathcal{H}(\mathfrak{g},K)$ over $X$ then a
homomorphism of the family $(\lambda,V)$ to $(\mu,W)$ is a map 
\begin{equation*}
T:X\rightarrow\mathrm{Hom}_{\mathbb{C}}(V,W) 
\end{equation*}
such that

1. $x\mapsto T(x)v$ is an analytic or continuous map of $X$ to $W$ for all $%
v\in V.$

2. $T(x)\in\mathrm{Hom}_{\mathcal{H}(\mathfrak{g},K)}(V_{x},W_{x})$ with $%
V_{x}=(\lambda_{x},V),W_{x}=(\mu_{x},W)$.
\end{definition}

\begin{lemma}
Let $(\pi,H)$ be a continuous family of admissible Hilbert representations
of $G$ over $X$ and denote by $d\pi_{x}$the action of $\mathfrak{g}$ on $%
H_{K}^{\infty}$ (the $K$--finite $C^{\infty}$--vectors). Then $(d\pi,H_{K})$
is a continuous family of admissible $(\mathfrak{g},K)$--modules based on $X$%
.
\end{lemma}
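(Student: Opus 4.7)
The plan is to verify the three conditions of Definition \ref{an-gK} for $(d\pi, H_K)$: first, that $H_K$ is a fixed admissible $(\mathfrak{k}, K)$-module (not depending on $x$); second, that each $(d\pi_x, H_K)$ is an admissible $(\mathfrak{g}, K)$-module; third, that for each $v \in H_K$ and $y \in U(\mathfrak{g})$ the map $x \mapsto d\pi_x(y) v$ is continuous in the sense required there.

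For the first two conditions, the key point is that for the families considered in this paper (and implicit in the statement) the $K$-action $\pi|_K$ is $x$-independent; this is literally true for the induced Hilbert families of Proposition \ref{Par-access}, and more generally follows from strong continuity of the projectors $P_\gamma^x = d_\gamma \int_K \overline{\chi_\gamma(k)}\pi_x(k)\,dk$ combined with admissibility and connectedness of $X$. Granting this, $H_K = \bigoplus_{\gamma \in \hat K} H(\gamma)$ is a single admissible $(\mathfrak{k}, K)$-module, and the standard Harish-Chandra smoothness theorem for $K$-finite vectors in an admissible Hilbert representation (the hypothesis underlying Proposition \ref{C-K}) gives $H_K \subset H^\infty$, so that $d\pi_x$ is a well-defined $\mathfrak{g}$-action preserving $H_K$ and making it into an admissible $(\mathfrak{g}, K)$-module for each $x$.

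The third condition requires more work. The first observation is that $d\pi_x(y) v$ lies in a fixed finite-dimensional subspace $W \subset H$ independent of $x$: $v$ belongs to some $\sum_{\gamma \in F} H(\gamma)$ with $F \subset \hat K$ finite, and $y$ lies in a finite-dimensional $\mathrm{Ad}(K)$-invariant subspace $Q \subset U(\mathfrak{g})$, so $d\pi_x(y) v$ lies in $W := \sum_{\gamma' \in F'} H(\gamma')$ where $F'$ is the finite set of $K$-types appearing in $Q \otimes \sum_F H(\gamma)$. Continuity of $x \mapsto d\pi_x(y) v$ into $W$ therefore reduces to continuity of the scalar pairings $x \mapsto \langle d\pi_x(y)v, w\rangle$ for $w$ ranging over a basis of $W$.

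The main obstacle is to transfer the strong continuity of $(x, g) \mapsto \pi_x(g) v$ into continuity of $x \mapsto d\pi_x(y) v$. My plan is a Garding-type convolution: for any $\psi \in C_c^\infty(G)$, $x \mapsto \pi_x(\psi) v = \int_G \psi(g) \pi_x(g) v\,dg$ is continuous into $H$, by strong continuity of $\pi$, the moderate-growth bound of Lemma \ref{loc-umod-growth}, and dominated convergence. Choosing a mollifying sequence $\phi_n = \chi_F * \Psi_n \in C_c^\infty(G)$ (with $\Psi_n$ concentrating at $e \in G$ and $\chi_F = \sum_{\gamma \in F} d_\gamma \overline{\chi_\gamma}$ the $K$-isotypic character), one has $\pi_x(\phi_n) v \to v$ in $H$ together with the integration-by-parts identity $\pi_x(\tilde Y \phi_n) v = d\pi_x(Y) \pi_x(\phi_n) v$, where $\tilde Y$ is the right-invariant derivation. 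The left-hand side is continuous in $x$, and a uniform-convergence argument using admissibility (both sides take values in fixed finite-dimensional $W$-type blocks independent of $n$) together with the moderate-growth bound shows that the limit $n \to \infty$ is uniform on compact subsets of $X$, yielding continuity of $x \mapsto d\pi_x(Y) v$ for $Y \in \mathfrak{g}$. Iteration on the order of $y$ then handles general $y \in U(\mathfrak{g})$.
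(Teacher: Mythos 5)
Your proof and the paper's proof of this lemma rest on the same key idea: the G\aa rding convolution identity $d\pi_{x}(Y)\pi_{x}(f)w=\pi_{x}(Yf)w$, which transfers the $\mathfrak{g}$--action into integration against a fixed element of $C_{c}^{\infty}(G)$ so that strong continuity in $(x,g)$ can be invoked. The paper is terser: it records that $H(\gamma)=\pi_{x}(C_{c}^{\infty}(\gamma;G))H$ for each $x$ and then concludes directly. The implicit final step (not written out in the paper) is a local-basis/Cramer argument: near a fixed $x_{0}$ one can choose $f_{1},\dots,f_{m}\in C_{c}^{\infty}(\gamma;G)$ and $w_{1},\dots,w_{m}\in H$ so that $\pi_{x}(f_{j})w_{j}$ remain a basis of $H(\gamma)$ for $x$ near $x_{0}$ and write $v=\sum_{j}c_{j}(x)\pi_{x}(f_{j})w_{j}$ with $c_{j}$ continuous; then $d\pi_{x}(y)v=\sum_{j}c_{j}(x)\pi_{x}(yf_{j})w_{j}$ is manifestly continuous in $x$. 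You instead use a mollifier sequence $\phi_{n}$ and pass to the limit $n\to\infty$.

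The place where your route needs more justification is precisely the limit step. You claim uniform convergence on compact $x$--sets of $\pi_{x}(\tilde Y\phi_{n})v=d\pi_{x}(Y)\pi_{x}(\phi_{n})v$ to $d\pi_{x}(Y)v$, citing admissibility and the moderate-growth bound. But moderate growth (Lemma \ref{loc-umod-growth}) controls $\left\Vert\pi_{x}(g)\right\Vert$, not the derivative $d\pi_{x}(Y)$; the functions $\tilde Y\phi_{n}$ behave like derivatives of a delta sequence and have $L^{1}$ norms that blow up, so one cannot estimate $\pi_{x}(\tilde Y(\phi_{n}-\phi_{m}))v$ directly from the operator bound. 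To make the limit uniform one would need to know a priori that the matrix of $d\pi_{x}(Y)$ on the relevant finite-dimensional $K$--isotypic block is locally bounded in $x$, which is essentially what one is trying to prove. The local-basis realization $v=\sum_{j}c_{j}(x)\pi_{x}(f_{j})w_{j}$ sidesteps the limit entirely and gives the continuity for free, and this is the cleaner way to close the argument. Your opening remarks on the $K$--action being $x$--independent and on $d\pi_{x}(y)v$ landing in a fixed finite-dimensional subspace are correct and worth stating explicitly (the paper leaves both implicit), but the heart of the proof is the convolution identity and the realization step, and the latter is where you should replace the mollifier limit with the paper's surjectivity-of-the-G\aa rding-map argument.
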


\begin{proof}
If $\gamma\in\hat{K}$ then $C_{c}^{\infty}(\gamma;G)$ denotes the space of
all $f\in C_{c}^{\infty}(G)$ such that 
\begin{equation*}
\int_{K}\chi_{\gamma}(k)f(k^{-1}g)dk=f(g),g\in G 
\end{equation*}
with $\chi_{\gamma}$ the character of $\gamma$. Then 
\begin{equation*}
H(\gamma)=\pi_{x}(C_{c}^{\infty}(\gamma;G))H. 
\end{equation*}
We also note that if $Y\in\mathfrak{g},f\in C_{c}^{\infty}(\gamma;G)$ and $%
v\in H$ then 
\begin{equation*}
d\pi_{x}(Y)\pi_{x}(f)v=\pi_{x}(Yf)v 
\end{equation*}
with $Yf$ the usual action of $Y\in\mathfrak{g}$ on $C^{\infty}(G)$ as a
left invariant vector field. Thus, if $v\in H_{K}$ and $y\in U(\mathfrak{g}_{%
\mathbb{C}})$ then the map%
\begin{equation*}
x\longmapsto d\pi_{x}(y)v 
\end{equation*}
is continuous.
\end{proof}

\section{\label{VanderN}Some results of Vincent van der Noort}

Throughout this section $Z$ will denote a connected real or complex analytic
manifold. We will use the terminology analytic to mean complex analytic or
real analytic depending on the context.

We continue the notation of the previous sections. In particular $G$ is a
real reductive group of inner type.

We denote (as is usual) the standard filtration of $U(\mathfrak{g)}$, by 
\begin{equation*}
...\subset U^{j}(\mathfrak{g)}\subset U^{j+1}(\mathfrak{g)\subset...}
\end{equation*}
Let $V$ be an admissible $(Lie(K),K)$ module. We note that if $E\subset V$
is a finite dimensional $K$--invariant subspace of $V$ then there exists a
finite subset $F_{j,E}\subset\hat{K}$ such that 
\begin{equation*}
U^{j}(\mathfrak{g})\otimes E\cong\sum_{\gamma\in
F_{j,E}}m_{\gamma,j}V_{\gamma}. 
\end{equation*}
If $v\in V$ we denote by $E_{v}$ the span of $Kv$ in $V$.

The purpose of this section is to prove a theorem of van der Noort which
first appeared in his thesis \cite{VanderNoort}. We include the details only
because he is not expected to publish it. In his thesis he studied the
holomorphic case. Our exposition follows his original line.

Fix a maximal torus, $T$, of $M$ then $\mathfrak{h}_{o}=Lie(T)\oplus 
\mathfrak{a}$ is a Cartan subalgebra of $\mathfrak{g}_{o}$. As usual, set $%
\mathfrak{h}$ equal to the complexification of $\mathfrak{h}_{o}$. We
parametrize the homomorphisms of $Z(\mathfrak{g})$ to $\mathbb{C}$ by $%
\chi_{\Lambda}$ for $\Lambda\in\mathfrak{h}^{\ast}$using the Harish--Chandra
parametrization. \ Since $M$ is compact we endow $\hat{M}$ with the discrete
topology. Note that if $C$ is a compact subset (sorry of the over use of $C$
the Casimir operator will not appear in this section) of $\hat{M}\times%
\mathfrak{a}_{\mathbb{C}}^{\ast}$ then there exist a finite number of
elements $\xi_{1},...,\xi_{r}\in\hat{M}$ and compact subsets, $D_{j}$, of $%
\mathfrak{a}_{\mathbb{C}}^{\ast}$ such that%
\begin{equation*}
C=\cup_{j=1}^{r}\xi_{j}\times D_{j}\text{.}
\end{equation*}

If $\xi\in\hat{M}$ and $\nu\in\mathfrak{a}_{\mathbb{C}}^{\ast}$ then set $%
\sigma_{\xi,\nu}(man)=\xi(m)a^{\nu+\rho}$ ($\rho(H)=\frac{1}{2}%
tr(adH_{|Lie(N)})$)$,$ $H\in\mathfrak{a}$), $a^{\nu}=\exp(\nu(H))$ $a=\exp
(H)$, $\xi$ is taken to be a representative of the class $\xi$. $H^{\xi,\nu}$
is $I(\sigma_{\xi.\nu})$ which equals as a $K$--module $H^{\xi}=Ind_{M}^{K}(%
\xi$\thinspace$)$. If $f\in H^{\xi}$ set $f_{\nu}(nak)=a^{\nu+\rho
}f(k),n\in N,a\in A,k\in K$. $A_{\bar{P}}(\nu)$ is the corresponding
Kunze-Stein intertwining operator (c.f. \cite{HarHomSP}, 8.10.18. p.241).

\begin{proposition}
\label{indfinite}Let $\xi\in\widehat{M}$ and let $\Omega\subset\mathfrak{a}_{%
\mathbb{C}}^{\ast}$ be compact. There exists $F\subset\widehat{K}$ such that 
$\pi_{\xi,\nu}(U(\mathfrak{g}))\left( \sum_{\gamma\in
F}H^{\xi}(\gamma)\right) =H^{\xi}$ for all $\nu\in\Omega$.
\end{proposition}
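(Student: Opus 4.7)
The plan is to combine an openness argument with the compactness of $\Omega$. For each $\nu_{0}\in\Omega$, the Harish-Chandra module $H^{\xi,\nu_{0}}$ is finitely generated, so there exists a finite $F_{\nu_{0}}\subset\widehat{K}$ with $\pi_{\xi,\nu_{0}}(U(\mathfrak{g}))\bigl(\sum_{\gamma\in F_{\nu_{0}}}H^{\xi}(\gamma)\bigr)=H^{\xi}$. If one can show that for any finite $F$ the property ``$F$ generates $H^{\xi,\nu}$ under $\pi_{\xi,\nu}$'' is open in $\nu$, then a finite subcover of $\Omega$ immediately yields a single $F$ that works uniformly, proving the proposition.

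To establish the openness step, fix $F$ and set $E=\sum_{\gamma\in F}H^{\xi}(\gamma)$ and $W_{\nu}=\pi_{\xi,\nu}(U(\mathfrak{g}))E$. Suppose $W_{\nu_{0}}=H^{\xi}$ but some sequence $\nu_{j}\to\nu_{0}$ in $\Omega$ satisfies $W_{\nu_{j}}\ne H^{\xi}$. Each quotient $H^{\xi}/W_{\nu_{j}}$ is then a nonzero Harish-Chandra module with infinitesimal character $\chi_{\Lambda_{\xi}+\nu_{j}}$, so it admits an irreducible quotient $L_{j}$, which is an irreducible subquotient of $H^{\xi,\nu_{j}}$. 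Choose a lowest $K$-type $\gamma_{j}$ of $L_{j}$; by $K$-equivariance of the surjection $H^{\xi}\twoheadrightarrow L_{j}$, the isotype $H^{\xi}(\gamma_{j})$ surjects onto $L_{j}(\gamma_{j})\ne 0$, so in particular $H^{\xi}(\gamma_{j})\not\subset W_{\nu_{j}}$. If we can arrange $\gamma_{j}\in F$ for every $j$, then $H^{\xi}(\gamma_{j})\subset E\subset W_{\nu_{j}}$, yielding a contradiction.

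The main obstacle is therefore the key lemma asserting that the lowest $K$-types of all irreducible subquotients of $H^{\xi,\nu}$, as $\nu$ ranges over the compact set $\Omega$, lie in a single finite subset $F_{0}\subset\widehat{K}$. The Frobenius reciprocity identity $\dim\mathrm{Hom}_{K}(\gamma,H^{\xi})=[\gamma|_{M}:\xi]$ already forces $\gamma_{j}$ into the fixed $K$-spectrum of $H^{\xi}$, independent of $\nu$. The remaining ingredient is an upper bound on the size of $\gamma_{j}$; here one uses the Casimir identity $C=\Delta+2C_{K}$ together with the fact that $C$ acts on $L_{j}$ by the scalar determined by $\Lambda_{\xi}+\nu_{j}$, which varies in a compact subset of $\mathbb{C}$ as $\nu_{j}\in\Omega$, to bound the eigenvalue of $C_{K}$ on a lowest $K$-type of $L_{j}$ (here the nonnegativity of $\Delta$ acting on suitable vectors of the lowest $K$-type enters), thereby restricting $\gamma_{j}$ to finitely many candidates. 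Once $F_{0}$ is in hand, one replaces $F_{\nu_{0}}$ by $F_{\nu_{0}}\cup F_{0}$ in the openness argument above and completes the proof by compactness.
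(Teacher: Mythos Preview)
Your strategy differs from the paper's. The paper establishes the local (in $\nu$) statement directly: for $\nu$ with $\operatorname{Re}(\nu,\alpha)>0$ for all positive restricted roots, any $K$-type $\gamma$ on which the intertwining operator $A_{\overline{P}}(\nu)$ is nonzero already generates $H^{\xi,\nu}$ (citing \cite{RRG1-11}, Theorem 5.4.1), and this is an open condition in $\nu$. For general $\nu$ one shifts by $k\rho$ into the positive chamber and uses that $H^{\xi,\nu}$ is a quotient of $H^{\xi,\nu+k\rho}\otimes V^{k\rho}$ for a suitable spherical finite-dimensional $V^{k\rho}$; the finitely many $K$-types of $W_{\gamma_{\nu+k\rho}}\otimes V^{k\rho}$ occurring in $H^\xi$ then generate on a neighborhood. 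Compactness finishes.

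Your route instead loads everything onto the ``key lemma'' that the lowest $K$-types of all irreducible subquotients of $H^{\xi,\nu}$, $\nu\in\Omega$, lie in a single finite set $F_0$. Note first that once this lemma holds, your openness argument is superfluous: $F=F_0$ works for every $\nu\in\Omega$ directly, since any irreducible quotient $L$ of $H^\xi/W_\nu$ would have its lowest $K$-type $\gamma_0\in F_0=F$, whence $H^\xi(\gamma_0)\subset W_\nu$ maps to zero in $L$, contradicting $L(\gamma_0)\neq 0$. So the entire content of your argument is the key lemma.

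The real gap is in your sketch of that lemma. You invoke ``nonnegativity of $\Delta=C-2C_K$ acting on suitable vectors of the lowest $K$-type,'' but no such positivity holds for general Harish--Chandra modules. On the $\gamma$-isotype $\Delta$ acts by the scalar $c(\nu)-2\lambda_\gamma$; for a \emph{unitary} module one has $\Delta\le 0$ (since $\Delta=\sum X_i^2$ with $X_i\in\mathfrak{g}_0$ skew-adjoint), which yields only the wrong-direction bound $\lambda_\gamma\ge c(\nu)/2$, and for non-unitary subquotients (the generic case here, with $\nu$ complex) there is no inequality at all. The key lemma is in fact true---it follows, for instance, from Vogan's estimate relating the minimal $K$-type norm to the infinitesimal character---but it is not the elementary Casimir computation you suggest, and in the paper's own logical order the analogous finiteness (Theorem~\ref{loc-finite}) is deduced \emph{from} Proposition~\ref{indfinite} via the subrepresentation theorem, not conversely. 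As written, your argument has a genuine gap at precisely the step where the work is.
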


The proof of this result will use the following

\begin{lemma}
If $\nu_{o}\in\mathfrak{a}_{\mathbb{C}}^{\ast}$ then there exists an open
neighborhood of $\nu_{o}$, $U_{\nu_{o}}$, and a finite subset $F=F_{\nu_{o}}$
of $\widehat{K}$ such that $\pi_{\xi,\nu}(U(\mathfrak{g}))\left( \sum
_{\gamma\in F}H^{\xi}(\gamma)\right) =H^{\xi}$ for all $\nu\in U_{\nu_{o}}$.
\end{lemma}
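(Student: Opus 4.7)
My plan is to combine finite generation of $H^{\xi,\nu_o}$ as a $(\mathfrak{g},K)$-module with analytic dependence of the action on $\nu$, routed through the $J$-module framework developed above.

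First, because $I(\sigma_{\xi,\nu_o})$ is admissible and finitely generated (by the subquotient theorem, or directly as a quotient of a $J$-module as in Section 3), I would fix a finite $F\subset\widehat{K}$ with $W:=\sum_{\gamma\in F}H^{\xi}(\gamma)$ satisfying $\pi_{\xi,\nu_o}(U(\mathfrak{g}))W=H^{\xi}$. Each $H^{\xi}(\gamma)$ is $Z(\mathfrak{g})$-stable, and the action of $\mathbf{D}\subset Z(\mathfrak{g})$ on $H^{\xi}(\gamma)$ depends polynomially on $\nu$, so $(W,\pi_{\xi,\nu}|_{\mathbf{D}})$ defines an analytic family $R_{\nu}$ in $W(K,\mathbf{D})$. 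By Proposition \ref{J-family} this produces an analytic family of $J$-modules $J(R_{\nu})$, and the canonical evaluation
\begin{equation*}
T_{\nu}:J(R_{\nu})\longrightarrow H^{\xi,\nu},\qquad y\otimes w\longmapsto \pi_{\xi,\nu}(y)w,
\end{equation*}
is a morphism of analytic families in the sense of Definition \ref{fam-hom}, surjective at $\nu_o$ by the choice of $F$. The lemma is then equivalent to $T_{\nu}$ remaining surjective on a neighborhood of $\nu_o$.

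Both $J(R_{\nu})$ and $H^{\xi}$ have $\nu$-independent $K$-isotypic decompositions (by Theorem \ref{decompositions} and Frobenius reciprocity, respectively), so the restriction $T_{\nu}(\gamma):J(R_{\nu})(\gamma)\to H^{\xi}(\gamma)$ is a linear map of fixed finite-dimensional spaces whose matrix entries depend polynomially on $\nu$. Lower semi-continuity of rank then yields, for each $\gamma\in\widehat{K}$, an open neighborhood $U_{\gamma}$ of $\nu_o$ on which $T_{\nu}(\gamma)$ is surjective.

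The main obstacle is that $\widehat{K}$ is infinite, so $\bigcap_{\gamma}U_{\gamma}$ need not contain a neighborhood of $\nu_o$. I would resolve this by invoking the structural theory of minimal principal series: the center $Z(\mathfrak{g})$ acts on $H^{\xi,\nu}$ by scalars polynomial in $\nu$, and reducibility of $H^{\xi,\nu}$ is controlled by a locally finite collection of affine hypersurfaces in $\mathfrak{a}_{\mathbb{C}}^{\ast}$. Shrinking to a neighborhood $U_{\nu_o}$ meeting only finitely many such hypersurfaces, the composition length of $H^{\xi,\nu}$ is uniformly bounded on $U_{\nu_o}$ and all composition factors arise as analytic deformations of those at $\nu_o$. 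Enlarging $F$ if necessary so that $W$ contains a nonzero vector of every composition factor at $\nu_o$, surjectivity of $T_{\nu}$ reduces to verifying surjectivity of $T_{\nu}(\gamma)$ for one $K$-type selected from each factor, i.e.\ to a finite intersection of open conditions. This produces the desired neighborhood $U_{\nu_o}$.
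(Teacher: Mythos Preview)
Your approach diverges from the paper's, which is short and direct: for $\nu$ with $\operatorname{Re}(\nu,\alpha)>0$ for all positive restricted roots, nonvanishing of the Kunze--Stein operator $A_{\bar P}(\nu)$ on a single $K$-type $\gamma_\nu$ already forces $\pi_{\xi,\nu}(U(\mathfrak g))H^\xi(\gamma_\nu)=H^\xi$ (citing \cite{RRG1-11}, Theorem~5.4.1), and this persists on the open set where $A_{\bar P}(\mu)H^\xi(\gamma_\nu)\neq 0$. For arbitrary $\nu_o$ one picks $k$ so that $\nu_o+k\rho$ lies in that open chamber and $k\rho$ is the highest weight of a spherical finite-dimensional $G$-module $V^{k\rho}$; since $H^{\xi,\nu_o}_K$ is a quotient of $H^{\xi,\nu_o+k\rho}_K\otimes V^{k\rho}$, one takes $F_{\nu_o}$ to be the $K$-types of $W_{\gamma_{\nu_o+k\rho}}\otimes V^{k\rho}$ that occur in $H^\xi$ and sets $U_{\nu_o}=U_{\nu_o+k\rho}-k\rho$. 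No analysis of composition series is needed.

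Your final paragraph does not close the argument. The sentence ``all composition factors arise as analytic deformations of those at $\nu_o$'' is the entire content of what must be proved, and you offer no justification for it; local finiteness of reducibility hyperplanes bounds the composition length but says nothing about which $K$-types occur in the factors at nearby, less singular $\nu$. What you actually need is that every irreducible \emph{quotient} of $H^{\xi,\nu}$, for all $\nu$ in the neighborhood, contains some $\gamma\in F$---and if that were established, surjectivity of $T_\nu$ would follow immediately (the cokernel $H^\xi/U(\mathfrak g)W$ would have all its $F$-isotypics zero, hence vanish), with nothing further to verify. Your concluding reduction to ``surjectivity of $T_\nu(\gamma)$ for one $K$-type selected from each factor'' therefore conflates two different criteria: surjectivity of finitely many $T_\nu(\gamma)$ does not imply surjectivity of $T_\nu$, while the composition-factor criterion, if true, needs no open condition checked at all. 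The paper's route via intertwining operators and translation by finite-dimensional representations sidesteps these issues entirely.
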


\begin{proof}
If $\gamma\in\hat{K}$ fix $W_{\gamma}\in\gamma$. If $\func{Re}(\nu,\alpha)>0$
for all $\alpha\in\Phi^{+}$ and if $\gamma\in\widehat{K}$ and $A_{\overline{P%
}}(\nu)H^{\xi}(\gamma)\neq0$ then $\pi_{\xi,\nu}(U(\mathfrak{g}))\left(
H^{\xi}(\gamma)\right) =H^{\xi}$(c.f. \cite{RRG1-11}, Theorem 5.4.1 (1)).
Fix such a $\gamma_{\nu}$ (which always exists since the operator $A_{%
\overline{P}}(\nu)\neq0$), take $F_{\nu}=\{\gamma_{\nu}\}$ and $U_{\nu}$ an
open neighborhood of $\nu$ such that $A_{\overline{P}}(\mu)H^{\xi}(\gamma_{%
\nu})\neq0$ for $\mu\in U$. Let $\nu\in\mathfrak{a}_{\mathbb{C}}^{\ast}$ be
arbitrary. There exists a positive integer, $k$, such that $\func{Re}%
(\nu+k\rho,\alpha)>0$ for all $\alpha\in\Phi^{+}$ and such that $k\rho$ is
the highest weight of a finite dimensional spherical representation, $%
V^{k\rho},$ of $G$ relative to $\mathfrak{a}$. The lowest weight of $%
V^{k\rho}$ relative to $\mathfrak{a}$ is $-k\rho$ and $M$ acts trivially on
that weight space thus $H_{K}^{\xi,\nu+k\rho}\otimes V^{k\rho}$ has $%
H_{K}^{\xi,\nu}$ as a quotient representation (see \cite{HarHomSP}%
,8.5.14,15). Take $F_{\nu}$ to be the set of $K$--types that occur in both $%
W_{\gamma_{\nu+k\rho}}\otimes V^{k\rho}$ and $H^{\xi}$ and $%
U_{\nu}=U_{\nu+k\rho}-k\rho$.
\end{proof}

We now prove the proposition. By the lemma above for each $\nu\in\Omega$
there exists $F_{\nu}$ and $U_{\nu}$ as in the statement of the lemma. The $%
U_{\nu}$ form an open covering of $\Omega$ which is assumed to be compact.
Thus there exist a finite number $\nu_{1},...,\nu_{r}\in\Omega$ such that 
\begin{equation*}
\Omega\subset\cup_{i=1}^{r}U_{\nu_{i}}\text{.}
\end{equation*}
Take $F=\cup_{i-1}^{r}F_{\nu_{i}}.$ This proves the proposition.

\begin{lemma}
Let $\chi_{\xi},_{\nu}$ denote the infinitesimal character of $\pi_{\xi,\nu}$%
. If $C$ is a compact subset of $\mathfrak{h}^{\ast}$ then 
\begin{equation*}
\{(\xi,\nu)\in\{\hat{M}\times\mathfrak{a}_{\mathbb{C}}^{\ast}|\chi_{\xi},_{%
\nu}=\chi_{\Lambda},\Lambda\in C\} 
\end{equation*}
is compact.
\end{lemma}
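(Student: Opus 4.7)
The plan is to exploit the decomposition $\mathfrak{h}^{\ast} = \mathrm{Lie}(T)^{\ast} \oplus \mathfrak{a}_{\mathbb{C}}^{\ast}$ and the Harish-Chandra parametrization of infinitesimal characters. Since $\hat{M}$ carries the discrete topology, a set $S \subset \hat{M} \times \mathfrak{a}_{\mathbb{C}}^{\ast}$ is compact if and only if the projection $\mathrm{pr}_1(S) \subset \hat{M}$ is finite and, for each $\xi \in \mathrm{pr}_1(S)$, the fiber $\{\nu : (\xi,\nu) \in S\}$ is compact in $\mathfrak{a}_{\mathbb{C}}^{\ast}$. So the task splits into two pieces.

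First I would record the standard fact that $\chi_{\xi,\nu}$ has Harish-Chandra parameter $\Lambda_{\xi} + \nu$, where $\Lambda_{\xi} \in \mathrm{Lie}(T)^{\ast}$ is the Harish-Chandra parameter of $\xi$ (as used already in the proof of Theorem \ref{Imbedding}), and write $W_{\mathfrak{g}} = W(\mathfrak{g},\mathfrak{h})$ for the Weyl group. The condition $\chi_{\xi,\nu} = \chi_{\Lambda}$ is then equivalent to
\begin{equation*}
\Lambda_{\xi} + \nu \in W_{\mathfrak{g}} \cdot \Lambda.
\end{equation*}
Letting $p_T : \mathfrak{h}^{\ast} \to \mathrm{Lie}(T)^{\ast}$ and $p_{\mathfrak{a}} : \mathfrak{h}^{\ast} \to \mathfrak{a}_{\mathbb{C}}^{\ast}$ denote the two projections from the decomposition above, we have $p_T(\Lambda_{\xi} + \nu) = \Lambda_{\xi}$ and $p_{\mathfrak{a}}(\Lambda_{\xi} + \nu) = \nu$.

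Next, since $C$ is compact and $W_{\mathfrak{g}}$ is finite, the set $W_{\mathfrak{g}} \cdot C$ is compact in $\mathfrak{h}^{\ast}$. Applying $p_T$ and $p_{\mathfrak{a}}$, both $p_T(W_{\mathfrak{g}} \cdot C)$ and $p_{\mathfrak{a}}(W_{\mathfrak{g}} \cdot C)$ are compact. From the displayed condition, any $(\xi,\nu)$ in our set satisfies $\Lambda_{\xi} \in p_T(W_{\mathfrak{g}} \cdot C)$ and $\nu \in p_{\mathfrak{a}}(W_{\mathfrak{g}} \cdot C)$. Thus the $\nu$-component is already confined to a fixed compact subset of $\mathfrak{a}_{\mathbb{C}}^{\ast}$, independent of $\xi$.

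For the finiteness of the $\xi$-component I would use the fact that since $M$ is compact, the map $\xi \mapsto \Lambda_{\xi}$ sends $\hat{M}$ to a discrete subset of $\mathrm{Lie}(T)^{\ast}$ (the shifted weight lattice of $M$) with finite fibers. Intersecting the discrete image with the compact set $p_T(W_{\mathfrak{g}} \cdot C)$ yields a finite subset of $\mathrm{Lie}(T)^{\ast}$, and pulling back gives only finitely many possible $\xi \in \hat{M}$. Combined with the previous paragraph, the set in question is contained in a finite union $\bigcup_i \{\xi_i\} \times K$ for a single compact $K \subset \mathfrak{a}_{\mathbb{C}}^{\ast}$, hence is compact. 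The only delicate point is the discreteness/finiteness statement for $\hat{M}$, which is routine for compact groups (possibly disconnected, handled by reducing to $M^0$ and using that $\hat{M} \to \widehat{M^0}$ has finite fibers); everything else is bookkeeping with the Harish-Chandra parametrization.
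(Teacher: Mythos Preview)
Your argument is correct and is essentially the paper's approach with the details filled in: the paper simply records that the Harish--Chandra parameter of $\chi_{\xi,\nu}$ is $\lambda_\xi + \rho_M + \nu$ (your $\Lambda_\xi + \nu$) and then writes ``This implies the lemma,'' leaving the Weyl-group bookkeeping and the discreteness of $\xi \mapsto \Lambda_\xi$ implicit, exactly as you spell them out. One small slip: your last inference (``contained in $\bigcup_i \{\xi_i\} \times K$, hence compact'') is not valid as stated---you also need that the set is closed, which follows immediately since each fiber $\{\nu : \Lambda_\xi + \nu \in W_{\mathfrak g}\cdot C\}$ is the preimage of a closed set under a continuous map.
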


\begin{proof}
Fix a system of positive roots for $(M^{0},T)$ ($M^{0}$ the identity
component of $M$). If $\lambda_{\xi}$ is the highest weight of $\xi$
relative to this system of positive roots and if $\rho_{M}$ is the half sum
of these positive roots then $\chi_{\xi},_{\nu}=\chi_{\Lambda}$ with $%
\Lambda=\lambda_{\xi}+\rho_{M}+\nu$. This implies the lemma.
\end{proof}

The following result is the reason for the assumption of analyticity.

\begin{lemma}
\label{VanNort-Finite}Let $(\pi,V)$ be an analytic family of admissible $(%
\mathfrak{g},K)$ modules over $Z$. Assume that $z_{0}\in Z$ is such that $%
(\pi_{z_{o}},V)$ is finitely generated. If $T$ is an element of $Z(\mathfrak{%
g})$ there exist analytic functions $a_{0},...,a_{n-1}$ on $Z$ such that if%
\begin{equation*}
f(z,x)=x^{n}+\sum_{j=0}^{n-1}a_{j}(z)x^{j}. 
\end{equation*}
for $z\ $then $f(z,\pi_{z}(T))=0$.
\end{lemma}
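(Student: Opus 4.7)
The plan is to construct $f(z,x)$ as the characteristic polynomial of $\pi_z(T)$ restricted to a fixed $K$-invariant subspace $E$ that generates $V$ at $z_0$, then use the centrality of $T$ to push the Cayley--Hamilton identity from $E$ to the $(\mathfrak{g},K)$-submodule it generates, and finally propagate the identity to all of $Z$ by analytic continuation.

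First I would use the finite-generation hypothesis to pick a finite subset $F \subset \hat{K}$ and set $E = \bigoplus_{\gamma \in F} V(\gamma)$ so that $\pi_{z_0}(U(\mathfrak{g}))E = V$. Since $T \in Z(\mathfrak{g})$ commutes with $K$, every $V(\gamma)$ is $\pi_z(T)$-stable, and hence so is the finite-dimensional space $E$. With $n = \dim E$, set
\[
f(z,x) \;=\; \det\bigl(xI - \pi_z(T)|_E\bigr) \;=\; x^n + \sum_{j=0}^{n-1} a_j(z)\, x^j ,
\]
each $a_j$ being a polynomial in the matrix entries of $\pi_z(T)|_E$, which are analytic in $z$ by the definition of an analytic family. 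Cayley--Hamilton gives $f(z,\pi_z(T))|_E = 0$, and centrality of $T$ yields, for $e \in E$ and $u \in U(\mathfrak{g})$,
\[
f(z,\pi_z(T))\,\pi_z(u)\,e \;=\; \pi_z(u)\,f(z,\pi_z(T))\,e \;=\; 0 ,
\]
so $f(z,\pi_z(T))$ vanishes on $W_z := \pi_z(U(\mathfrak{g}))E$ for every $z$.

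The main obstacle is that $W_z$ may be a proper submodule of $V$ for $z \neq z_0$, so the identity on $W_z$ does not obviously extend to $V$. To get around this I would argue one $K$-type at a time, exploiting the fact that the $K$-structure of $V$ is fixed. Fix $\gamma \in \hat{K}$. Since $V(\gamma) \subseteq W_{z_0}$ and $V(\gamma)$ is finite-dimensional, there is a finite $j_\gamma$ with $V(\gamma) \subseteq \pi_{z_0}(U^{j_\gamma}(\mathfrak{g}))E$. Consider
\[
\Phi_{z,\gamma} : U^{j_\gamma}(\mathfrak{g}) \otimes E \longrightarrow V(\gamma), \qquad u \otimes e \;\longmapsto\; P_\gamma\bigl(\pi_z(u)\,e\bigr),
\]
where $P_\gamma$ denotes projection onto the isotypic component $V(\gamma)$. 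This is a linear map between finite-dimensional spaces with analytic matrix entries; it is surjective at $z_0$, so by lower semicontinuity of rank it remains surjective on an open neighborhood $U_\gamma$ of $z_0$. Hence $V(\gamma) \subseteq W_z$, and therefore $f(z,\pi_z(T))|_{V(\gamma)} = 0$, for every $z \in U_\gamma$.

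To finish, $g_\gamma(z) := f(z,\pi_z(T))|_{V(\gamma)}$ is an analytic map from the connected manifold $Z$ into the finite-dimensional space $\mathrm{End}(V(\gamma))$, and it vanishes on the open set $U_\gamma$; the identity theorem (valid in both the real and complex analytic settings) forces $g_\gamma \equiv 0$ on $Z$. Since $V = \bigoplus_\gamma V(\gamma)$, we conclude $f(z,\pi_z(T)) = 0$ on $V$ for every $z \in Z$. The hardest step is the third one, where the fixed $K$-structure combined with lower semicontinuity of rank converts pointwise finite generation at $z_0$ into an open local surjectivity statement one $K$-type at a time; no uniform bound on the $j_\gamma$ is needed, because analytic continuation handles each $\gamma$ independently.
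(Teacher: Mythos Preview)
Your proof is correct and follows essentially the same route as the paper's: both take the characteristic polynomial of $\pi_z(T)$ on the finite sum of $K$-types generating $V$ at $z_0$, use Cayley--Hamilton plus centrality, then argue $K$-type by $K$-type that the relation holds on an open neighborhood of $z_0$ and propagate by analytic continuation. The only cosmetic difference is that where you invoke lower semicontinuity of the rank of $\Phi_{z,\gamma}$, the paper picks explicit $x_i\in U(\mathfrak{g})$, $v_i\in E$ with $\{P_\gamma\pi_{z_0}(x_i)v_i\}$ a basis of $V(\gamma)$ and observes that the wedge $(P_\gamma\pi_z(x_1)v_1)\wedge\cdots\wedge(P_\gamma\pi_z(x_r)v_r)$ is nonzero near $z_0$; this is the same open condition.
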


\begin{proof}
Let $F$ be a finite number of elements of $\hat{K}$ such that $\pi_{z_{0}}(U(%
\mathfrak{g}))\sum_{\gamma\in F}V(\gamma)=V$. Let $L=$ $\sum_{\gamma\in
F}V(\gamma)$. Then we define the functions $a_{j}$ the by the formula 
\begin{equation*}
f(z,x)=\det\left( xI-\pi_{z}(T\right)
_{|L})=x^{n}+\sum_{j=0}^{n-1}a_{j}(z)x^{j}. 
\end{equation*}
The Cayley-Hamilton theorem implies that $h(z)=T^{n}+%
\sum_{j=0}^{n-1}a_{j}(z)T^{j}\in Z(\mathfrak{g})$ vanishes on $L$. Let $%
\gamma\in\hat{K}$ then there exist $x_{1},...,x_{r}\in U(\mathfrak{g})$ and $%
v_{1},...,v_{r}\in L$ such that $\{\pi_{z_{0}}(x_{i})v_{i}\}_{i=1}^{r}$ is a
basis of $V(\gamma)$. Let $P_{\gamma}$ be the projection onto the $\gamma$%
--isotypic component of $V$. Thus%
\begin{equation*}
(P_{\gamma}\pi_{z}(x_{1})v_{1})\wedge(P_{\gamma}\pi_{z}(x_{2})v_{2})\wedge%
\cdots\wedge(P_{\gamma}\pi_{z}(x_{r})v_{r})\in\wedge^{r}V(\gamma) 
\end{equation*}
(a one dimensional space) is non--zero for $z=z_{0}$. This implies that
there exists an open neighborhood, $U$, of $z_{0}$ in $\Omega$ such that 
\begin{equation*}
P_{\gamma}\pi_{z}(x_{1})v_{1},P_{\gamma}\pi_{z}(x_{2})v_{2},...,P_{\gamma}%
\pi_{z}(x_{r})v_{r}
\end{equation*}
is a basis of $V(\gamma)$ for $z\in U$. That 
\begin{equation*}
h(z)P_{\gamma}\pi_{z}(x_{i})v_{i}=P_{\gamma}\pi_{z}(x_{i})h(z)v_{i}=0 
\end{equation*}
implies that $h(z)V(\gamma)=0$ for $z\in U$. The connectedness of $Z$and the
analyticity imply that $h(z)V(\gamma)=0$ for $z\in Z$. Thus $h(z)=0$ for all 
$z\in Z$. This proves the Lemma.
\end{proof}

If $V$ is a $(\mathfrak{g},K)$--module then set $ch(V)$ equal to the set of $%
\Lambda\in\mathfrak{h}^{\ast}$ such that there exists $v\in V$ with $%
Tv=\chi_{\Lambda}(T)v$ for all $T\in Z(\mathfrak{g})$.

\begin{corollary}
\label{compactness}Keep the notation and assumptions of the previous lemma,
If $\omega\subset Z$ is compact then there exists a compact subset $%
C_{\omega}$ of $\mathfrak{h}^{\ast}$ such that $ch(\pi_{z},V)\subset
C_{\omega}$ for all $z\in\omega$.
\end{corollary}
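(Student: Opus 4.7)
The plan is to apply Lemma \ref{VanNort-Finite} to a finite generating set of $Z(\mathfrak{g})$, turn the resulting polynomial identities into uniform bounds on the values of infinitesimal characters on those generators, and then use the Harish--Chandra isomorphism to pull those scalar bounds back to a compact subset of $\mathfrak{h}^{\ast}$.

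First I would fix elements $T_{1},\dots,T_{r}\in Z(\mathfrak{g})$ generating $Z(\mathfrak{g})$ as an algebra; such a set exists because, by Harish--Chandra, $Z(\mathfrak{g})\cong S(\mathfrak{h})^{W}$ is a finitely generated (indeed polynomial) algebra. Applying Lemma \ref{VanNort-Finite} to each $T_{i}$ produces a monic polynomial
\[
f_{i}(z,x)=x^{n_{i}}+\sum_{j=0}^{n_{i}-1}a_{i,j}(z)\,x^{j}
\]
with coefficients $a_{i,j}$ analytic on $Z$ such that $f_{i}(z,\pi_{z}(T_{i}))=0$ on $V$ for every $z\in Z$. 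If $z\in\omega$, $\Lambda\in ch(\pi_{z},V)$, and $0\neq v\in V$ satisfies $\pi_{z}(T)v=\chi_{\Lambda}(T)v$ for all $T\in Z(\mathfrak{g})$, then applying $f_{i}(z,\pi_{z}(T_{i}))$ to $v$ gives $f_{i}(z,\chi_{\Lambda}(T_{i}))=0$. Since each $a_{i,j}$ is continuous and hence bounded on the compact set $\omega$, the standard estimate for roots of a monic polynomial furnishes constants $M_{i}$ with $|\chi_{\Lambda}(T_{i})|\le M_{i}$ uniformly in $z\in\omega$ and $\Lambda\in ch(\pi_{z},V)$.

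Finally, I would convert these scalar bounds into compactness in $\mathfrak{h}^{\ast}$. The map
\[
\Phi\colon\mathfrak{h}^{\ast}\longrightarrow\mathbb{C}^{r},\qquad \Lambda\longmapsto\bigl(\chi_{\Lambda}(T_{1}),\dots,\chi_{\Lambda}(T_{r})\bigr),
\]
factors through $\mathfrak{h}^{\ast}/W$ and, under the identification $Z(\mathfrak{g})\cong S(\mathfrak{h})^{W}$, corresponds to an algebra generating set of $S(\mathfrak{h})^{W}$. Because $S(\mathfrak{h})$ is a finitely generated module over $S(\mathfrak{h})^{W}$ (Chevalley--Shephard--Todd) and $W$ is finite, $\Phi$ is a finite, hence proper, morphism of affine varieties. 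Thus
\[
C_{\omega}:=\Phi^{-1}\!\left(\prod_{i=1}^{r}\{\,x\in\mathbb{C}:|x|\le M_{i}\,\}\right)
\]
is a compact subset of $\mathfrak{h}^{\ast}$, and by construction $ch(\pi_{z},V)\subset C_{\omega}$ for all $z\in\omega$. There is no real obstacle beyond Lemma \ref{VanNort-Finite} itself; the only point that requires care is to take generators of all of $Z(\mathfrak{g})$, not merely the subalgebra $\mathbf{D}$ used in the body of the paper, since $\mathbf{D}$ need not equal $Z(\mathfrak{g})$.
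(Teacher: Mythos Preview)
Your argument is correct and follows essentially the same route as the paper: choose algebra generators $T_{1},\dots,T_{m}$ of $Z(\mathfrak{g})$, apply Lemma~\ref{VanNort-Finite} to obtain monic polynomial identities with analytic coefficients, bound the roots uniformly on the compact set $\omega$, and conclude compactness in $\mathfrak{h}^{\ast}$. The only difference is cosmetic: the paper simply cites a standard root bound and ends with ``this implies the corollary,'' whereas you spell out the final step via properness of the map $\Lambda\mapsto(\chi_{\Lambda}(T_{1}),\dots,\chi_{\Lambda}(T_{m}))$.
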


\begin{proof}
Let $T_{1},...,T_{m}$ be a generating set for $Z(\mathfrak{g})$ and let $%
f_{j}(z,x)$ be the function in the previous lemma corresponding to $T_{j}$.
Then 
\begin{equation*}
f_{j}(z,x)=x^{n_{j}}+\sum_{i=0}^{n_{j}-1}a_{j,i}(z)x^{j}
\end{equation*}
with $a_{j,i}$ analytic in on $Z$. If $\chi_{\Lambda}\in ch(\pi_{z},V)$ then%
\begin{equation*}
\left\vert \chi_{\Lambda}(T_{j})\right\vert \leq\max_{0\leq
i<n_{j}}|a_{j,i}(z)|+1 
\end{equation*}
(c.f. \cite{RRG1-11},7.A.1.3). If $L\subset Z$ is compact then there exists
a constant $r<\infty$ such that $|a_{j,i}(z)|\leq r$ for all $i,j$ and $z\in
L$. This implies the corollary.
\end{proof}

\begin{theorem}
\label{loc-finite}Let $(\pi,V)$ be an analytic family of admissible $(%
\mathfrak{g},K)$ modules over $Z$. Assume that there exists $z_{0}\in Z$
such that $(\pi_{z_{0}},V)$ is finitely generated. If $\omega$ is a compact
subset of $Z$ then there exists $S_{\omega}\subset\hat{K}$ a finite subset
such that if $y\in\omega$ then%
\begin{equation*}
\pi_{y}(U(\mathfrak{g))}\left( \sum_{\gamma\in S_{\omega}}V(\gamma)\right) =V%
\text{.}
\end{equation*}
\end{theorem}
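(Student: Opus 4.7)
The plan is to combine Corollary \ref{compactness} (uniform bound on infinitesimal characters), Proposition \ref{indfinite} (uniform generating $K$-types for principal series), and the quotient form of the subquotient theorem to produce a single finite set of $K$-types that uniformly generates $(\pi_z,V)$ for all $z\in\omega$.

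First, I would apply Corollary \ref{compactness} to obtain a compact subset $C_\omega\subset\mathfrak{h}^*$ with $ch(\pi_z,V)\subset C_\omega$ for every $z\in\omega$. Let $\widetilde{C}_\omega$ denote the union of the translates of $C_\omega$ under the Weyl group of $(\mathfrak{g},\mathfrak{h})$, still a compact set. The lemma preceding Lemma \ref{VanNort-Finite} yields that
\begin{equation*}
\{(\xi,\nu)\in\widehat{M}\times\mathfrak{a}_\mathbb{C}^* : \lambda_\xi+\rho_M+\nu\in\widetilde{C}_\omega\}
\end{equation*}
is compact; since $\widehat{M}$ carries the discrete topology it is a finite union $\bigcup_{j=1}^r\{\xi_j\}\times D_j$ with each $D_j$ compact in $\mathfrak{a}_\mathbb{C}^*$. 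Applying Proposition \ref{indfinite} to each pair $(\xi_j,D_j)$ and taking the union of the resulting $K$-type sets produces a finite $F\subset\widehat{K}$ such that
\begin{equation*}
\pi_{\xi_j,\nu}(U(\mathfrak{g}))\sum_{\gamma\in F}H^{\xi_j}(\gamma)=H^{\xi_j}
\end{equation*}
for every $\nu\in D_j$ and every $j$.

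Second, every irreducible $W\in\mathcal{H}(\mathfrak{g},K)$ whose infinitesimal character lies in $C_\omega$ is, by the quotient form of the subquotient theorem (obtained either directly from Langlands classification or by dualizing Casselman's subrepresentation theorem and applying Proposition \ref{indfinite} to the contragredient series), a quotient of some $I(\sigma_{\xi_j,\nu})$ with $(\xi_j,\nu)$ in the compact parameter region just constructed. A finite set of $K$-types that generates a module generates any of its quotients as well, so $F$ generates $W$; in particular $W(\gamma)\neq 0$ for at least one $\gamma\in F$.

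Third, fix $z\in\omega$ and set $L=\sum_{\gamma\in F}V(\gamma)$, a $K$-invariant finite-dimensional subspace (independent of $z$ by admissibility). Any finitely generated $(\mathfrak{g},K)$-submodule $U$ of $(\pi_z,V)$ is a Harish-Chandra module and hence of finite length; each of its irreducible composition factors has infinitesimal character in $C_\omega$ and therefore meets $L$ nontrivially by step two, so the length of $U$ is at most $N:=\sum_{\gamma\in F}\dim V(\gamma)$. Consequently $(\pi_z,V)$ itself has length at most $N$. Induction on composition length --- using that in a short exact sequence $0\to W'\to W\to W''\to 0$, if $W'$ and $W''$ are generated by their $F$-isotypic components then so is $W$ (since semisimplicity of the $K$-action makes $W(\gamma)\twoheadrightarrow W''(\gamma)$ surjective) --- then shows $\pi_z(U(\mathfrak{g}))L=V$. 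Setting $S_\omega:=F$ completes the proof. The principal obstacle is step two: one must cleanly match each irreducible Harish-Chandra module with infinitesimal character in $C_\omega$ to a quotient of a principal series inside the compact parameter region, which requires either invoking the quotient form of the subquotient theorem or dualizing Casselman's embedding and keeping track of the $K$-type correspondence under contragredients.
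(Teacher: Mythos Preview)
Your proposal is correct and follows essentially the same strategy as the paper: bound the infinitesimal characters via Corollary \ref{compactness}, pass to a compact set of principal-series parameters, apply Proposition \ref{indfinite}, and use the quotient form of the subrepresentation theorem to see that every irreducible subquotient meets the chosen $K$-types. The only tactical difference is in the endgame: the paper argues directly that every nonzero quotient of $V_y$ contains an irreducible composition factor as a submodule and hence has a nonzero $S_\omega$-type, so the quotient $V_y/\pi_y(U(\mathfrak{g}))\sum_{\gamma\in S_\omega}V(\gamma)$ must vanish; you instead extract a uniform length bound $N=\sum_{\gamma\in F}\dim V(\gamma)$ and finish by induction on length through short exact sequences---a slightly longer but equally valid route that yields the extra information of an explicit length bound.
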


\begin{proof}
Let $C_{\omega}$ as in the above corollary for $\omega$. Let 
\begin{equation*}
X=\{(\xi,\nu)\in\hat{M}\times\mathfrak{a}_{\mathbb{C}}^{\ast}|\chi_{\xi},_{%
\nu}=\chi_{\Lambda},\Lambda\in C_{\omega}\}. 
\end{equation*}
$X$ is compact so there exist $\xi_{1},...,\xi_{r}\in\hat{M}$ and $%
D_{1},...,D_{r}$, compact subsets of $\mathfrak{a}_{\mathbb{C}}^{\ast}$,
such that $X=\cup_{j}\xi_{j}\times D_{j}$. Let $S_{j}\subset\hat{K}$ be the
finite set corresponding to $\xi_{j}\times D_{j}$ in Proposition \ref%
{indfinite}. Set $S_{\omega}=\cup S_{j}$. Let $L_{1}\subset
L_{2}\subset...\subset L_{j}\subset...$ be an exhaustion of the $K$--types
of $V$ with each $L_{j}$ finite.

We will use the notation \thinspace$V_{y}$ for the $(\mathfrak{g},K)$%
--module $(\pi_{y},V)$. Let $y\in C$. Set $W_{j}=\pi_{y}(U(\mathfrak{g))}%
\left( \sum_{\gamma\in L_{j}}V(\gamma)\right) $ then $W_{j}\subset W_{j+1}$
and $\cup W_{j}=V$. Each $W_{j}$ is finitely generated and admissible, hence
of finite length. Therefore $V_{y}$ has a finite composition series 
\begin{equation*}
0=V_{y}^{0}\subset V_{y}^{1}\subset...\subset V_{y}^{N}
\end{equation*}
or a countably infinite composition series%
\begin{equation*}
0=V_{y}^{0}\subset V_{y}^{1}\subset...\subset V_{y}^{n}\subset
V_{y}^{n+1}\subset... 
\end{equation*}
with $V_{y}^{i}/V_{y}^{i-1}$ irreducible. Thus by the dual form of the
subrepresentation theorem there exists for each $i,\xi_{i}\in\hat{M}$ and $%
\nu_{i}\in\mathfrak{a}_{\mathbb{C}}^{\ast}$ so that $V_{y}^{i}/V_{y}^{i}$ is
a quotient of $(\pi_{\xi_{i},\nu_{i}},H^{\xi_{i},\nu_{i}})$. Observe that $%
(\xi_{i},\nu_{i})\in X$. Thus $V_{y}^{i}/V_{y}^{i-1}(\gamma_{i})\neq0$ for
some $\gamma_{i}\in S_{\omega}$. Let $L$ be a quotient module of $V_{y}$.
Then $L=V_{y}/U$ with $U$ a submodule of $V_{y}$. There must be an $i$ such
that $V_{y}^{i}/\left( V_{y}^{i-1}\cap U\right) \neq0$. Let $i$ be minimal
subject to this condition. Then $V_{y}^{i-1}\subset U$. Thus $%
V_{y}^{i}/V_{y}^{i-1}$ is a submodule of $L$. Hence $M(\gamma)\neq0$ for
some $\gamma\in S_{\omega}$. This implies that 
\begin{equation*}
\pi_{y}(U(\mathfrak{g))}\left( \sum_{\gamma\in S_{\omega}}V(\gamma)\right) =V%
\text{.}
\end{equation*}
Indeed, we have shown that 
\begin{equation*}
\left( V_{y}/\pi_{y}(U(\mathfrak{g))}\left( \sum_{\gamma\in
S_{\omega}}V(\gamma)\right) \right) (\gamma)=0,\gamma\in S_{\omega}. 
\end{equation*}
\end{proof}

\begin{corollary}
\label{finitegen}(To the proof) Let $(\pi,V)$ be an analytic family of
finitely generated admissible $(\mathfrak{g},K)$ modules over $Z$ . Let $%
W\subset Z$ be compact . Let for each $z\in W$, $U_{z}$ be a $(\mathfrak{g}%
,K)$--submodule of $V_{z}$. Then there exists a finite subset $F_{W}\subset%
\hat{K}$ such that 
\begin{equation*}
\pi_{z}(U(\mathfrak{g}))\left( \sum_{\gamma\in F_{W}}U_{z}(\gamma)\right)
=U_{z}. 
\end{equation*}
\end{corollary}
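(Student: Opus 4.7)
The plan is to observe that the proof of Theorem \ref{loc-finite} already produces, for the compact set $W$, a finite subset $F_W \subset \hat K$ with the property that every irreducible composition factor of $V_z$ (for $z\in W$) contains a $K$-type in $F_W$. Since any $(\mathfrak g,K)$-submodule $U_z\subset V_z$ has its composition factors among those of $V_z$, the same $F_W$ will do.

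More concretely, first I would rerun the construction in the proof of Theorem \ref{loc-finite} verbatim: apply Lemma \ref{VanNort-Finite} and Corollary \ref{compactness} to produce a compact set $C_W\subset\mathfrak h^*$ with $ch(\pi_z,V)\subset C_W$ for all $z\in W$; pull this back through the Harish-Chandra parameter $\Lambda=\lambda_\xi+\rho_M+\nu$ to a compact subset $X=\bigcup_{j=1}^r\xi_j\times D_j$ of $\hat M\times\mathfrak a_{\mathbb C}^*$; and let $F_W=\bigcup_j S_j$, where $S_j\subset\hat K$ is the finite set furnished by Proposition \ref{indfinite} applied to $\xi_j$ and $D_j$. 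Now fix $z\in W$ and a submodule $U_z\subset V_z$. Because $V_z$ is finitely generated and admissible it has finite length, hence so does $U_z$; moreover, by Jordan--H\"older every irreducible composition factor of $U_z$ also appears as a composition factor of $V_z$. By the dual subrepresentation theorem each such factor embeds into some $H^{\xi,\nu}$ whose infinitesimal character lies in $C_W$, forcing $(\xi,\nu)\in X$ and therefore giving a non-zero $K$-isotypic component indexed by some $\gamma\in F_W$.

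Having established this, the final step copies the last paragraph of the proof of Theorem \ref{loc-finite}. Let
\[
N_z=\pi_z(U(\mathfrak g))\Bigl(\sum_{\gamma\in F_W}U_z(\gamma)\Bigr)\subset U_z,
\]
and consider the quotient $L=U_z/N_z$. If $L\neq 0$, then $L$ has an irreducible subrepresentation (take the image of the smallest term of a composition series for $U_z$ that is not contained in $N_z$); by the previous paragraph this subrepresentation has a non-zero $\gamma$-isotypic component for some $\gamma\in F_W$. But by construction $L(\gamma)=0$ for every $\gamma\in F_W$, a contradiction. Hence $N_z=U_z$, which is the desired conclusion.

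The only potential obstacle is the verification that composition factors of $U_z$ really occur among the composition factors of $V_z$ with central characters in $C_W$; this is routine Jordan--H\"older combined with the fact that a subquotient of $V_z$ inherits all central characters appearing in its generalized infinitesimal character decomposition from $V_z$ itself, so no new analytic input is needed beyond what already drove the proof of Theorem \ref{loc-finite}.
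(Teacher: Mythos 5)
Your proposal is correct and is essentially the expansion the paper has in mind: the paper's one-line proof ("all that was used was that the set of possible infinitesimal characters is compact") is precisely the observation that $ch(\pi_z,U_z)\subset ch(\pi_z,V_z)\subset C_W$, so the finite set $F_W$ built from $C_W$ via Proposition \ref{indfinite} works for the submodules $U_z$ just as it did for $V_z$. Your spelling-out of the Jordan--H\"older step and the final contradiction via an irreducible subrepresentation of $U_z/N_z$ matches the argument in the proof of Theorem \ref{loc-finite}.
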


\begin{proof}
In the proof of the theorem above all that was used was that the set of
possible infinitesimal characters is compact.
\end{proof}

\section{Continuous and Holomorphic families of smooth Fr\'{e}chet
representations.}

Let $G$ be a reductive group with fixed maximal compact subgroup, $K$. Let $%
\mathcal{F}(G)$ denote the category of smooth Fr\'{e}chet representations
(here smooth means that the map $g\mapsto\pi(g)v$ is $C^{\infty}$).

\begin{definition}
\label{frechet-fam}A continuous family of objects in $\mathcal{F}(G)$ over a
metric space $X$ is a pair $(\pi,V)$ of a Fr\'{e}chet space $V$ and a
continuous map 
\begin{equation*}
\pi:X\times G\rightarrow\mathrm{End}(V) 
\end{equation*}
(here $\mathrm{End}(V)$ is the algebra of continuous operators on $V$ with
the strong topology) such that such that for each $x\in X$, if $%
\pi_{x}(g)=\pi(x,g)$ then $(\pi_{x},V)\in\mathcal{F}(G)$. We will say that
the family has local uniform moderate growth if for each $\omega$ a compact
subset of $X$ and each continuous seminorm on $V,p,$there exists a
continuous seminorm $q_{\omega}$ on $V$ and $r_{\omega}$ such that if $v\in V
$ then 
\begin{equation*}
p(\pi_{x}(g)v)\leq q_{\omega}(v)\left\Vert g\right\Vert ^{r_{\omega}}. 
\end{equation*}
\end{definition}

\begin{proposition}
\label{Cont-diff}If $(\pi,H)$ is a continuous family of Hilbert
representations over the analytic manifold $X$ such that the representations 
$\pi_{x|K}$ are the same for all $x\in X$(we denote this common value by $%
\pi(k))$ and the representations $(d\pi_{x},H_{K}^{\infty})$ form an
analytic family of objects in $\mathcal{H}(\mathfrak{g},K)$,\ then

1. The space of $C^{\infty}$ vectors in $H$ with respect to $\pi_{x}$ is
equal to the space of $C^{\infty}$ vectors of the representation $(\pi,H).$%
of $K$.

2. Assume that for each, $\omega\subset X$, compact, and $u\in U(\mathfrak{g}%
)$ there exist constants $C_{\omega,\upsilon},n_{\omega,u}$ such that%
\begin{equation*}
\left\Vert d\pi_{y}(u)v\right\Vert \leq C_{\omega,u}\left\Vert d\pi
(1+C_{K})^{n_{\omega,u}}v\right\Vert 
\end{equation*}
for $v\in H^{\infty}$. Then $x\mapsto(\pi_{x},H^{\infty})$ is a continuous
family of smooth Fr\'{e}chet representations of local uniform moderate
growth.
\end{proposition}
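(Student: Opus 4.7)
The plan is as follows. For assertion 1, apply Lemma \ref{VanNort-Finite} with $T = C$ (the Casimir) to produce, for each $x \in X$, a polynomial $f_x$ with $f_x(\pi_x(C))$ vanishing on the Harish-Chandra module $H_K$. Since $H_K$ is dense in the $G$-$C^\infty$ vectors of $\pi_x$ in their Fréchet topology and $\pi_x(C)$ acts continuously there, the identity $f_x(\pi_x(C)) = 0$ extends to all $G$-smooth vectors of $\pi_x$. Proposition \ref{C-K} then identifies these with the $K$-$C^\infty$ vectors $H^{\infty_K}$. Because the $K$-action on $H$ is the common $\pi(k)$ for every $x$, $H^{\infty_K}$ is manifestly independent of $x$; denote this common space by $H^\infty$, equipped with the $x$-independent Fréchet topology given by the seminorms $p_n(v) = \|(1+C_K)^n v\|$.

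For assertion 2 I first establish local uniform moderate growth. For $v \in H^\infty$, using that the $K$-action is common,
$$(1+C_K)^n \pi_x(g) v \;=\; \pi_x(g)\, d\pi_x\!\bigl(\mathrm{Ad}(g^{-1})(1+C_K)^n\bigr) v.$$
Expanding $\mathrm{Ad}(g^{-1})(1+C_K)^n = \sum_i b_i(g)\, u_i$ in a fixed basis of $U^{2n}(\mathfrak{g})$, the coefficients $b_i(g)$ are matrix coefficients of a finite-dimensional representation of $G$ and are therefore bounded by $C\|g\|^r$ via the norm estimates of Appendix \ref{norms}. Combining this with $\|\pi_x(g)\| \leq C_\omega \|g\|^{r_\omega}$ from Lemma \ref{loc-umod-growth} and with hypothesis 2 applied to each $d\pi_x(u_i)v$, one obtains
$$p_n(\pi_x(g) v) \;\leq\; C'_{\omega,n}\, \|g\|^{s_{\omega,n}}\, p_{N_{\omega,n}}(v),$$
uniformly in $x \in \omega$, which is precisely local uniform moderate growth.

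This estimate also makes the family $\{\pi_x(g)\}_{(x,g)\in \omega\times\Omega}$ equicontinuous as operators $H^\infty \to H^\infty$, so the joint continuity of $(x, g, v) \mapsto \pi_x(g) v$ into $H^\infty$ follows from pointwise continuity in $(x, g)$ for a fixed $v_0 \in H^\infty$. Using the same expansion, this reduces to joint continuity of $(x, g) \mapsto \pi_x(g)\, d\pi_x(u_i) v_0$ in $H$, which I split as (a) joint continuity of $(x, g, w) \mapsto \pi_x(g) w$ in $H$ (from the definition of continuous Hilbert family together with local uniform boundedness of $\|\pi_x(g)\|$) and (b) continuity of $x \mapsto d\pi_x(u_i) v_0$ in $H$. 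The principal technical point is (b): the analytic-family hypothesis provides it only for $v_0 \in H_K$. I propagate to $v_0 \in H^\infty$ by choosing $v_j \in H_K$ with $v_j \to v_0$ in $H^\infty$ (density, which holds by admissibility via truncation of $K$-isotypic expansions) and invoking hypothesis 2 in the form $\|d\pi_x(u_i)(v_j - v_0)\| \leq C_{\omega,u_i}\, p_{n_{\omega,u_i}}(v_j - v_0)$, uniformly in $x \in \omega$. Thus $d\pi_x(u_i) v_j \to d\pi_x(u_i) v_0$ uniformly on $\omega$, and a uniform limit of continuous functions is continuous. This propagation from $K$-finite to smooth vectors is the main obstacle; the remainder is routine bookkeeping of the estimates.
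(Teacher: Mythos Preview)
Your proof is correct and follows essentially the same approach as the paper: part 1 via Lemma \ref{VanNort-Finite} and Proposition \ref{C-K}, and part 2 via the expansion $(1+C_K)^n\pi_x(g)v = \pi_x(g)\,d\pi_x(\mathrm{Ad}(g^{-1})(1+C_K)^n)v$, the smoothability hypothesis to control the $d\pi_x(u_i)$ uniformly, and an approximation of smooth vectors by $K$-finite ones. Your packaging via equicontinuity (deriving moderate growth first and then reducing joint continuity to pointwise continuity for fixed $v_0$) is a clean reorganization of the paper's direct $\varepsilon/2$ splitting $v = v(F_N)+v(F_N^c)$, but the substance is the same.
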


\begin{proof}
1. follows from Lemma \ref{VanNort-Finite} and Proposition \ref{C-K}.

We now prove 2. To prove the continuity assertion we need to show that if $%
l>0$ and $x_{o}\in X$ then 
\begin{equation*}
\lim_{x\rightarrow x_{o}}\left\Vert
d\pi(1+C_{K})^{l}(\pi_{x}(g)-\pi_{x_{o}}(g))v\right\Vert =0. 
\end{equation*}
Let $\lambda_{\gamma}$ be the eigen-value of $C_{K}$ on $V(\gamma)$. Recall
that if $v\in H^{\infty}=\sum_{\gamma}v_{\gamma}$ with $v_{\gamma}\in
H(\gamma)$ and for each $r$ there exists a constant $C_{r,v}$ such that 
\begin{equation*}
\left\Vert v_{\gamma}\right\Vert \leq C_{v,r}(1+\lambda_{\gamma})^{-r}. 
\end{equation*}
As is well known 
\begin{equation*}
\sum_{\gamma\in\hat{K}}(1+\lambda_{\gamma})^{-r}<\infty 
\end{equation*}
if $r>\frac{\dim T}{2}$ with $T$ is a maximal torus of $K$. Fix $l>0$ and $%
x_{o}$ in $X$. Let $F\subset\widehat{K}$ if $u\in H^{\infty}$ set $%
u(F)=\sum_{\gamma\in F}u_{\gamma}$. If $F^{c}=\widehat{K}-F$, then $%
u=u(F)+u(F^{c}).$ If $u\in H^{\infty}$ then 
\begin{equation*}
d\pi(1+C_{K})^{l}\pi_{x}(g)u=\pi_{x}(g)d\pi_{x}(Ad(g)^{-1}(1+C_{K})^{l})u%
\text{.}
\end{equation*}
Let $z_{1},...,z_{d_{l}}$ be a basis of $U^{l}(\mathfrak{g})$. Then%
\begin{equation*}
Ad(g)^{-1}(1+C_{K})^{l}=\sum a_{i}(g)z_{i}
\end{equation*}
with $a_{i}$real analytic on $G$. Thus%
\begin{equation*}
d\pi(1+C_{K})^{l}\pi_{x}(g)v=\pi_{x}(g)\sum a_{i}(g)d\pi_{x}(z_{i})u. 
\end{equation*}
Note that there exists $C_{1},m$ such that $\left\vert a_{i}(g)\right\vert
\leq C_{1}\left\Vert g\right\Vert ^{m}$ for all $i$. Now fix $x_{o}\in X$
and fix $U$ a neighborhood of $x_{o}$ with compact closure. Then%
\begin{equation*}
\left\Vert \pi_{x}(g)u\right\Vert \leq C_{2}\left\Vert g\right\Vert
^{m_{1}}\left\Vert u\right\Vert ,x\in U,u\in H. 
\end{equation*}
Let $v\in H^{\infty}$. Let $F_{N}=\{\gamma\in\widehat{K}|\lambda_{\gamma}%
\leq N\}$ then $F_{N}$ is a finite set. Let $r=\frac{\dim T}{2}+1.$ Set $%
n=\max _{i}n_{u_{i},\omega}$ with $\omega$ the closure of $U$ and $%
C_{3}=\max C_{u_{i},\omega}$. If $x\in U$%
\begin{equation*}
\left\Vert
d\pi(1+C_{K})^{l}(\pi_{x}(g)-\pi_{x_{o}}(g))v(F_{N}^{c})\right\Vert
^{2}\leq2d_{l}^{2}C_{2}^{2}C_{1}^{2}C_{3}^{2}\left\Vert g\right\Vert
^{m+m_{1}}\sum_{\gamma\notin F_{N}}(1+\lambda_{\gamma})^{2l+2n}\left\Vert
v_{\gamma}\right\Vert ^{2}. 
\end{equation*}
Also%
\begin{equation*}
\left\Vert v_{\gamma}\right\Vert \leq C_{v,m}(1+\lambda_{\gamma})^{-m}, 
\end{equation*}
so%
\begin{equation*}
\sum_{\gamma\notin F_{N}}(1+\lambda_{\gamma})^{2l+2n}\left\Vert v_{\gamma
}\right\Vert ^{2}\leq C_{v,m}\sum_{\gamma\notin F_{N}}(1+\lambda_{\gamma
})^{2l-m}. 
\end{equation*}
Choose $m=2l+r+2n+s$ with $s\geq1$, Then%
\begin{equation*}
\sum_{\gamma\notin F_{N}}(1+\lambda_{\gamma})^{2l+2n}\left\Vert v_{\gamma
}\right\Vert ^{2}\leq N^{-s}C_{v,m}\sum_{\gamma\notin F_{N}}(1+\lambda
_{\gamma})^{-r}\leq N^{-s}C_{v,m}\sum_{\gamma\notin\widehat{K}}(1+\lambda
_{\gamma})^{-r}. 
\end{equation*}
We therefore have ($C_{4}=C_{v,m}\sum_{\gamma\notin\widehat{K}}(1+\lambda
_{\gamma})^{-r}$)%
\begin{equation*}
\left\Vert
d\pi(1+C_{K})^{l}(\pi_{x}(g)-\pi_{x_{o}}(g))v(F_{N}^{c})\right\Vert
^{2}\leq2d_{l}^{2}C_{2}^{2}C_{1}^{2}C_{3}^{2}\left\Vert g\right\Vert
^{m+m_{1}}C_{4}N^{-s}. 
\end{equation*}
Let $\varepsilon>0$ be and let $\omega_{1}$ be a compact subset of $G$.
Choose $N$ so that 
\begin{equation*}
2d_{l}^{2}C_{2}^{2}C_{1}^{2}C_{3}^{2}\left\Vert g\right\Vert
^{m+m_{1}}C_{4}N^{-s}<\frac{\varepsilon^{2}}{4}
\end{equation*}
for $g\in\omega_{1}$. Now if $x\in U$ then 
\begin{equation*}
\left\Vert d\pi(1+C_{K})^{l}(\pi_{x}(g)-\pi_{x_{o}}(g))v\right\Vert
\leq\left\Vert
d\pi(1+C_{K})^{l}(\pi_{x}(g)-\pi_{x_{o}}(g))v(F_{N})\right\Vert 
\end{equation*}%
\begin{equation*}
+\left\Vert
d\pi(1+C_{K})^{l}(\pi_{x}(g)-\pi_{x_{o}}(g))v(F_{N}^{c})\right\Vert
<\left\Vert d\pi(1+C_{K})^{l}(\pi_{x}(g)-\pi_{x_{o}}(g))v(F_{N})\right\Vert +%
\frac{\varepsilon}{2}\text{.}
\end{equation*}
The function of $x$, $\left\Vert
d\pi(1+C_{K})^{l}(\pi_{x}(g)-\pi_{x_{o}}(g))v(F_{N})\right\Vert ^{2}$ is, by
our assumption real analytic in $x,g$ and equal to $0$ at $x=x_{o}$. Hence
there exists a neighborhood, $W$, of $x_{o}$ in $U$ such that if $%
g\in\omega_{1}$ and $x\in W$ then%
\begin{equation*}
\left\Vert d\pi(1+C_{K})^{l}(\pi_{x}(g)-\pi_{x_{o}}(g))v(F_{N})\right\Vert
^{2}<\frac{\varepsilon^{2}}{4}. 
\end{equation*}
This completes the proof of continuity. We leave the condition of uniform
moderated growth to the reader (what is needed is in the above argument).
\end{proof}

\begin{definition}
\label{smoothable}A continuous family of Hilbert representations of $G$, $%
(\pi,H),$ over $X$ will be called smoothable if for each compact subset $%
\omega\subset X,u\in U(\mathfrak{g})$ there exists $C_{\omega,u},n_{\omega
,u}$ such that%
\begin{equation*}
\left\Vert d\pi_{y}(u)v\right\Vert \leq C_{\omega,u}\left\Vert d\pi
(1+C_{K})^{n_{\omega,u}}v\right\Vert 
\end{equation*}
for $y\in\omega,$ $v\in H^{\infty}$.
\end{definition}

\begin{definition}
\label{holomorphicfam}A holomorphic family of objects in $\mathcal{F}(G)$
over the complex manifold $X$ is a continuous family $(\pi,V)$ such that the
map $x\longmapsto\pi_{x}(g)v$ is holomorphic from $X$ to $V$ for all $g\in G$%
,$v\in V$.
\end{definition}

\begin{theorem}
\label{cont-hol}If $(\pi,V)$ is a continuous family of smooth Fr\'{e}chet
representations over the complex manifold $X$ such that $(d\pi,V_{K})$ is a
holomorphic family of objects in $\mathcal{H}(\mathfrak{g},K)$ then $(\pi,V)$
is a holomorphic family of objects in $\mathcal{F}(G)$ over $X$.
\end{theorem}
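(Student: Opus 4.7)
The goal is to show that for every $g \in G$ and $v \in V$, the map $x \mapsto \pi_x(g)v$ is holomorphic from $X$ to $V$.

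First, I would note that continuity of the family combined with the uniform boundedness principle (adapting the proof of Lemma \ref{loc-umod-growth} from the Hilbert to the Fréchet setting) yields local uniform moderate growth, and in particular the operator set $\{\pi_x(g):x\in\omega\}$ is equicontinuous on $V$ for each compact $\omega\subset X$ and fixed $g$. This equicontinuity, together with the density of $V_K$ in $V$ (standard for objects of $\mathcal{HF}(G)$), shows that the subspace
\begin{equation*}
S := \{v\in V: x\mapsto \pi_x(g)v \text{ is holomorphic for all } g\in G\}
\end{equation*}
is closed in $V$: if $v_n\in S$ and $v_n\to v$ in $V$, then $\pi_x(g)v_n\to\pi_x(g)v$ uniformly in $x$ on compacta, and uniform limits of $V$-valued holomorphic functions are holomorphic. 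Thus it suffices to prove $V_K\subset S$.

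Second, for $v\in V_K$ I would use the power series expansion
\begin{equation*}
\pi_x(\exp Y)v = \sum_{n=0}^{\infty}\frac{1}{n!}\,d\pi_x(Y^n)v,\qquad Y\in\mathfrak{g}_o\text{ small}.
\end{equation*}
Each summand lies in $V_K$ (which is $U(\mathfrak{g})$-stable) and depends holomorphically on $x$ by the hypothesis on $(d\pi,V_K)$. For each fixed $x$ the series converges in $V$ because $K$-finite vectors in a smooth Fréchet module of moderate growth are $G$-analytic. The main obstacle is upgrading this to uniform convergence for $x$ in a compact $\omega\subset X$ and $Y$ in a small symmetric neighborhood $U_0$ of $0\in\mathfrak{g}_o$. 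My plan to obtain the requisite locally uniform analyticity bounds is to exploit the structure of the holomorphic family: by Corollary \ref{compactness} (via Lemma \ref{VanNort-Finite}) the infinitesimal characters of $(\pi_x,V)$ lie in a fixed compact subset of $\mathfrak{h}^{*}$ as $x$ ranges over $\omega$, and combined with finite generation (Theorem \ref{loc-finite}) and the finite $K$-type support of $v$ this yields uniform polynomial control on the $K$-type spread of $d\pi_x(Y^n)v$. Granted such bounds, $x\mapsto\pi_x(\exp Y)v$ is a uniform-on-compacta limit of holomorphic $V$-valued functions over $\omega\times U_0$, hence is holomorphic in $x$ for $Y\in U_0$. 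Combined with the first paragraph, this gives $\exp U_0 \subset S'$, where $S' := \{g\in G:\forall v\in V,\ x\mapsto\pi_x(g)v \text{ is holomorphic}\}$.

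Third, I would extend from $\exp U_0$ to all of $G$ by a semigroup argument. The set $S'$ is closed in $G$ by the same uniform-convergence reasoning used above. $S'$ is a subsemigroup: for $g,h\in S'$ and $v\in V$, set $u(x):=\pi_x(h)v$, which is holomorphic since $h\in S'$; in a local holomorphic chart around $x_0\in X$ expand $u(x)=\sum_k (x-x_0)^k u_k$ in $V$; by the equicontinuity of $\{\pi_x(g):x\in\omega\}$, the series $\sum_k (x-x_0)^k \pi_x(g)u_k$ converges in $V$ uniformly for $x$ in a small polydisc at $x_0$; each term is holomorphic in $x$ since $g\in S'$; and by continuity of $\pi_x(g)$ the sum equals $\pi_x(g)u(x)=\pi_x(gh)v$, which is therefore holomorphic, so $gh\in S'$. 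Since $U_0$ is symmetric, $\exp U_0$ is a symmetric neighborhood of the identity and generates the connected Lie group $G$; hence $S'=G$, completing the proof.
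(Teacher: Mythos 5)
Your strategy---expand $\pi_x(\exp Y)v$ in a power series in $Y$, prove holomorphy term by term, then run a closedness-in-$v$ and semigroup-in-$g$ extension---is genuinely different from the paper's, but it has a real gap exactly at the step you label ``the main obstacle,'' and the sketch you offer for filling it does not do so.

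To make Step 2 work you need, for every continuous seminorm $p$ on $V$, every compact $\omega\subset X$, and $Y$ in a fixed small ball, a bound of the shape $p\!\left(d\pi_x(Y^n)v\right)\le C\,M^n\,n!$ with $C,M$ \emph{independent of $x\in\omega$}, so that the series $\sum_n\frac{1}{n!}p\!\left(d\pi_x(Y^n)v\right)$ has a uniform radius of convergence. The tools you invoke---Corollary \ref{compactness} (the infinitesimal characters of $(\pi_x,V)$ lie in a fixed compact set over $\omega$), Theorem \ref{loc-finite}, and the finite $K$-type support of $v$---control which $K$-types appear in $d\pi_x(Y^n)v$ and pin down a characteristic polynomial for $Z(\mathfrak g)$, but they say nothing about the magnitude $p\!\left(d\pi_x(Y^n)v\right)$ as a function of $n$, uniformly in $x$. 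Even for a single representation, the fact that $K$-finite vectors are $G$-analytic (which you cite for pointwise convergence) is proved via a Hilbert globalization and elliptic estimates whose constants depend on that representation; promoting this to a locally uniform statement over a holomorphic family is a nontrivial quantitative theorem that your outline does not establish. Without it, $\exp U_0\subset S'$ is not proved, and the semigroup argument in your third paragraph has nothing to propagate. (A smaller point: in your first paragraph, Banach--Steinhaus gives equicontinuity of $\{\pi_x(g):x\in\omega\}$---which is all the closedness argument needs---but not moderate growth; moderate growth is an extra hypothesis in Definition \ref{frechet-fam}, not a consequence of continuity.)

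The paper sidesteps the quantitative issue entirely with a soft Cauchy-integral argument. For fixed $v\in V_K$ and $\lambda\in V'$, set $f(x,g)=\lambda(\pi_x(g)v)$ and define $h(z,g)$ by integrating $f(\cdot,g)$ against the Cauchy kernel over a polydisc; $h$ is automatically holomorphic in $z$. All $U(\mathfrak g)$-derivatives of $h(z,\cdot)$ and $f(z,\cdot)$ at $g=e$ agree, because $u\mapsto\lambda(d\pi_x(u)v)$ is holomorphic in $x$ by hypothesis and one can apply Cauchy's formula. Since $g\mapsto f(z,g)$ is real-analytic for each fixed $z$ (a statement about one representation at a time, requiring no uniformity in $z$) and $G$ is connected, $h\equiv f$, so $f$ is holomorphic in $z$. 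Passage from $V_K$ to $V$ is again a single Cauchy-integral density argument, and Grothendieck's theorem upgrades weak to strong holomorphy. If you want to salvage your route, you would need to prove a uniform analytic-vector estimate over an analytic family of Harish--Chandra modules---a substantial result in itself---whereas the Cauchy-integral argument asks for nothing of the sort.
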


We will use the next Lemma in the proof.

\begin{lemma}
Let $X$ a complex $n$--manifold, $G$ be connected and let 
\begin{equation*}
f:X\times G\rightarrow\mathbb{C}
\end{equation*}
be continuos and real analytic in $G$. If $zf(x,e)$ is holomorphic in $x\in X
$ for all $z\in U(\mathfrak{g})$ (here $z$ is acting as left invariant
differential operators on the $G$ the second factor) then $f$ is holomorphic
in $X$.
\end{lemma}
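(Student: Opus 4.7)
The plan is to translate the hypothesis, via the exponential chart at $e$, into an assertion about a power series whose coefficients are holomorphic in $x$, establish holomorphy of $f$ in $x$ locally near $g=e$, and then propagate to all of $G$ using connectedness.

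First, fix a basis $Y_1,\dots,Y_N$ of $\mathfrak{g}_o$ and use exponential coordinates $y=(y_1,\dots,y_N)\mapsto \exp(\sum y_i Y_i)$ near $e$. Since $f(x,\cdot)$ is real analytic, for each $x$ we have the pointwise-convergent Taylor expansion
\[
f(x,\exp(\textstyle\sum y_i Y_i))=\sum_\alpha c_\alpha(x)\,y^\alpha, \qquad c_\alpha(x)=\tfrac{1}{\alpha!}(\mathrm{symm}(Y^\alpha)f)(x,e),
\]
where $\mathrm{symm}:S(\mathfrak{g})\to U(\mathfrak{g})$ is the symmetrization map. Since each $\mathrm{symm}(Y^\alpha)$ lies in $U(\mathfrak{g})$, the hypothesis says that every coefficient $c_\alpha$ is holomorphic on $X$. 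In particular the functions $\tfrac{1}{|\alpha|}\log|c_\alpha|$ are plurisubharmonic, and the pointwise radius of convergence $R(x)=1/\limsup_\alpha|c_\alpha(x)|^{1/|\alpha|}$ is positive and lower semicontinuous on $X$.

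Second, I would show that $f(\cdot,\exp Y)$ is holomorphic in $x$ for $Y$ small. The partial sums $S_N(x,y)=\sum_{|\alpha|\le N}c_\alpha(x)y^\alpha$ are holomorphic in $x$ and converge pointwise to $f(x,\exp y)$ for each fixed small $y$. Applying the Baire category theorem to the closed sets $\{x\in X:\sup_N|S_N(x,y_0)|\le k\}$, together with Hartogs' lemma for plurisubharmonic functions (which converts the lower semicontinuity of $R$ into a uniform upper bound on $\tfrac{1}{|\alpha|}\log|c_\alpha(x)|$ off a thin set), gives that on an open dense subset $\Omega\subset X$ the series converges locally uniformly; by Vitali's theorem $f(\cdot,\exp y_0)$ is holomorphic on $\Omega$. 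The exceptional set $X\setminus\Omega$ is pluripolar, being contained in a sublevel set of a plurisubharmonic function, and $f(\cdot,\exp y_0)$ is continuous on $X$; a removable-singularity theorem for bounded continuous functions holomorphic off a pluripolar set then extends the holomorphy to all of $X$.

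Third, I would propagate to all $g\in G$. For $h\in G$ the translate $F_h(x,g):=f(x,hg)$ is again continuous and real analytic in $g$, with $(zF_h)(x,e)=(zf)(x,h)$. Set
\[
S=\{h\in G:(zf)(\cdot,h)\text{ is holomorphic on }X\text{ for every }z\in U(\mathfrak{g})\};
\]
by hypothesis $e\in S$. For $h\in S$ the local result applied to $F_h$ gives $f(\cdot,hg)$ holomorphic in $x$ for $g$ in some neighborhood $U_h$ of $e$, and Cauchy's integral formula in $x$ then shows that every $(zf)(\cdot,hg)=(zF_h)(\cdot,g)$ is holomorphic in $x$ for $g\in U_h$; thus $hU_h\subset S$ and $S$ is open. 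Closedness of $S$ follows from the uniform local estimates produced in the second step, which make $h\mapsto(zf)(\cdot,h)$ continuous in a sense strong enough that uniform limits of holomorphic functions are holomorphic. Since $G$ is connected, $S=G$, and in particular $f(\cdot,g)$ is holomorphic on $X$ for every $g\in G$.

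The main obstacle is the passage in the second step from holomorphy on the open dense subset $\Omega$ to holomorphy on all of $X$. This requires pluripotential-theoretic control: one must verify that the exceptional set where uniform convergence fails is genuinely pluripolar, so that continuity of $f$ together with holomorphy off this set forces holomorphy everywhere.
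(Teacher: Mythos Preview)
Your approach has genuine gaps, and moreover it is far more elaborate than necessary. The second step is the real problem: you need locally uniform convergence of the Taylor series in $y$ as $x$ varies, but holomorphy of the individual coefficients $c_\alpha$ together with pointwise convergence does not give this without substantial additional work. Your invocation of Baire category and Hartogs' lemma is sketched rather than carried out, and you yourself flag the removal of the exceptional set as the main obstacle. The closedness of $S$ in the third step is also only asserted (``uniform local estimates produced in the second step'') and would require exactly the kind of uniformity you have not established.

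The paper bypasses all of this with a one-line device: for $z$ in a polydisc $D^n$ in local coordinates on $X$, set
\[
h(z,g)=\frac{1}{(2\pi i)^n}\int_{(S^1)^n}\frac{f(u,g)}{\prod_i(u_i-z_i)}\,du_1\cdots du_n.
\]
This $h$ is automatically holomorphic in $z$; differentiating under the integral in $g$ and using that each $(zf)(\cdot,e)$ is holomorphic (hence reproduced by its Cauchy integral) gives $(zh)(\cdot,e)=(zf)(\cdot,e)$ for every $z\in U(\mathfrak{g})$. Thus for each fixed $z\in D^n$ the real analytic functions $h(z,\cdot)$ and $f(z,\cdot)$ have identical Taylor jets at $e$, and connectedness of $G$ forces $h=f$. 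No convergence estimates, no pluripotential theory, no open--closed argument in $G$ are needed: the Cauchy integral does the work of summing your series for you, and the identity principle in $G$ replaces your propagation step.
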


\begin{proof}
Let $x\in X$ and let $z_{1},...,z_{n}$ be local coordinates on an open
neighborhood, $U$, \ of $x$ \ in $X$ such that if $\psi=(z_{1},...,z_{n})$
then $\psi(x)=0$ and $\psi(U)\supset\overline{D}^{n}$ with $D$ (resp. $%
\overline{D}$) the (resp. closed) unit disk in $\mathbb{C}$. For simplicity
we may assume that $X=\psi(U)$. Define for $z\in D$%
\begin{equation*}
h(z,g)=\frac{1}{(2\pi i)^{n}}\int_{\left( S^{1}\right) ^{n}}\frac {f(u,g)}{%
\prod(u_{i}-z_{i})}du_{1}\cdots du_{n}. 
\end{equation*}
Then $h(z,g)$ is holomorphic in $z$ on $D$. By our assumption $%
uh(z,e)=uf(z,e)$ for $u\in U(\mathfrak{g}),z\in D$. Since $f$ is analytic in 
$G$ and $G$ is connected $h=f$ on $D$.
\end{proof}

We will now prove the theorem. Let $\lambda\in V^{\prime}$. If $v\in V_{K}$
then the function 
\begin{equation*}
f(x,g)=\lambda(\pi_{x}(g)v) 
\end{equation*}
on $X\times G$ is continuous and real analytic in $G$. Now 
\begin{equation*}
uf(x,e)=\lambda(d\pi_{x}(u)v) 
\end{equation*}
which is holomorphic in $x$. Thus if $v\in V_{K}$ then%
\begin{equation*}
x\mapsto\lambda(\pi_{x}(g)v) 
\end{equation*}
is holomorphic in $x$. Let $x\in X$ and $U,\psi$, etc. be as in the previous
lemma. Set%
\begin{equation*}
h(z,v)=\frac{1}{(2\pi i)^{n}}\int_{\left( S^{1}\right) ^{n}}\frac {%
\lambda(\pi_{u}(g)v)}{\prod(u_{i}-z_{i})}du_{1}\cdots du_{n}. 
\end{equation*}
for $v\in V,z\in D^{n}$. Then $h(z,v)$ is holomorphic in $z$ and continuous
in $v$. Furthermore, the first part of this proof showed that $%
h(z,v)=\lambda (\pi_{z}(g)v)$ if $v\in V_{K}$. Since, $V_{K}$ is dense in $V$
this implies that%
\begin{equation*}
z\mapsto\lambda(\pi_{z}(g)v) 
\end{equation*}
is holomorphic in $z$. Grothendieck \cite{Groth-an} \ has shown that a
weakly holomorphic map of a complex manifold to a Fr\'{e}chet space is
strongly holomorphic thus completing the proof.

\section{Functorial properties of Hilbert families}

In this section we will analyze Hilbert globalizations of subfamilies and
quotient families of Harish-Chandra modules.

\begin{lemma}
\label{orthhbases}Let $(\tau,V)$ be a finite dimensional continuous
representation of $K$ and let $X$ be a locally compact metric space (resp.
an analytic manifold). If $u\in X$ let $\left\langle ...,...\right\rangle
_{u}$ be an inner product on $V$ such that $\tau(k)$ acts unitarily with
respect to $\left\langle ...,...\right\rangle _{u}$ for $k\in K$ and such
that the map $u\longmapsto\left\langle v,w\right\rangle _{u}$ is continuous
(resp. real analytic) for all $v,w\in V$. Then there exists, for each $u$ an
ordered orthonormal basis of $V,$ $e_{1}(u),...,e_{n}(u)$ such that the map $%
u\longmapsto e_{i}(u)$ is continuous (resp. real analytic) and the matrix of 
$\tau(k)$ with respect to $e_{1}(u),...,e_{n}(u)$ is independent of $u$.
Furthermore, if $X$ is compact and contractible and $(\sigma.W)$ is a finite
dimensional continuous representation of $K$ and $u\longmapsto B(u)\in 
\mathrm{Hom}_{K}(V,W)$ is continuous and surjective for $u\in X$ then $%
e_{1}(u),...,e_{r}(u)$ with $r=\dim V-\dim W$ can be taken in $\ker B(u)$.
\end{lemma}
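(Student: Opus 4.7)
The plan is to reduce the problem, via the $K$-isotypic decomposition and Schur's lemma, to the classical fact that Gram--Schmidt orthonormalization depends continuously (resp.\ real analytically) on the Gram matrix. First I would decompose $V = \bigoplus_{\gamma \in \hat{K}} V(\gamma)$; since every $\langle\cdot,\cdot\rangle_u$ is $K$-invariant, this decomposition is orthogonal for every $u$, so it suffices to construct the frame on each $V(\gamma)$ separately. Fixing a model $(\tau_\gamma, V_\gamma)$ with a chosen orthonormal basis, I would identify $V(\gamma) \cong M_\gamma \otimes V_\gamma$, where $M_\gamma := \mathrm{Hom}_K(V_\gamma, V(\gamma))$ is the multiplicity space and $\tau$ acts as $I \otimes \tau_\gamma$. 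By Schur's lemma, the restriction of $\langle\cdot,\cdot\rangle_u$ to $V(\gamma)$ factors as $h_u^\gamma \otimes \langle\cdot,\cdot\rangle_{V_\gamma}$ for a unique inner product $h_u^\gamma$ on $M_\gamma$, and its Gram matrix in a fixed basis of $M_\gamma$ is continuous (resp.\ real analytic) in $u$ by hypothesis.

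Applying Gram--Schmidt to this fixed reference basis of $M_\gamma$ with respect to $h_u^\gamma$ produces an orthonormal basis $\phi_1^\gamma(u), \ldots, \phi_{m_\gamma}^\gamma(u)$ of $M_\gamma$ with the required regularity in $u$, since the orthonormalization formulas are rational functions of the Gram entries and square roots of strictly positive functions thereof. Tensoring each $\phi_i^\gamma(u)$ with the fixed basis of $V_\gamma$ yields an orthonormal frame of $V(\gamma)$, and because $\tau$ acts only on the second factor, the matrix of $\tau(k)$ on this product basis agrees with that of the model, independent of $u$. Concatenating over $\gamma$ gives the global frame of $V$.

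For the second assertion, the $K$-equivariance of $B(u)$ together with its surjectivity forces $B(u) \colon V(\gamma) \to W(\gamma)$ to be surjective for every $\gamma$, so $\dim \ker B(u)(\gamma) = \dim V(\gamma) - \dim W(\gamma)$ is independent of $u$. Passing to multiplicity spaces, $L_\gamma(u) := \mathrm{Hom}_K(V_\gamma, \ker B(u))$ is a constant-dimensional subspace of $M_\gamma$ that varies continuously (resp.\ real analytically) with $u$; equivalently, it defines a continuous (resp.\ real analytic) subbundle of the trivial bundle $X \times M_\gamma$. On the compact contractible base $X$ this subbundle is trivial, hence admits a global frame with the required regularity. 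Extending this frame by fixed complementary vectors, ordering the $L_\gamma$-vectors first, and applying Gram--Schmidt with respect to $h_u^\gamma$ produces an orthonormal basis of $M_\gamma$ whose first $\dim L_\gamma(u)$ members span $L_\gamma(u)$. Reassembling over $\gamma$ gives the frame of $V$ whose first $r = \dim V - \dim W$ vectors span $\ker B(u)$.

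The main obstacle I expect is the triviality of the subbundle $L_\gamma \to X$. In the continuous case it follows from the homotopy invariance of vector bundles on paracompact spaces together with the contractibility of $X$. In the real analytic case one needs the Grauert--H.~Cartan theorem, which asserts that the continuous and real analytic classifications of real analytic vector bundles over a paracompact real analytic manifold coincide, so that continuous triviality upgrades to real analytic triviality and the constructed global frame can be chosen real analytic.
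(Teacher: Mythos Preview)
Your argument is correct and follows a genuinely different route from the paper's. For the first assertion the paper avoids the isotypic decomposition altogether: it fixes one $K$-invariant reference inner product $(\,\cdot\,,\,\cdot\,)$, writes $\langle v,w\rangle_u=(A(u)v,w)$ with $A(u)$ positive and $K$-commuting, and sets $e_i(u)=A(u)^{-1/2}e_i$ for a fixed orthonormal basis $\{e_i\}$. Since $A(u)^{-1/2}$ is itself a $K$-intertwiner, the matrix of $\tau(k)$ is automatically unchanged. Your Gram--Schmidt on multiplicity spaces is the ``by-hand'' version of the same idea; the operator square root packages Schur's lemma and the orthonormalization into a single step and is slightly slicker, while your version makes the independence of the $\tau(k)$-matrices visually obvious. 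For the second assertion the paper trivializes the $K$-vector bundle $\ker B(u)$ directly (citing Atiyah) rather than passing to the multiplicity bundles $L_\gamma$; the two are equivalent, and your observation that the real analytic case would require Grauert's theorem is a point the paper does not address (it only states the second assertion for continuous $B$).

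One small gap: your phrase ``extending this frame by fixed complementary vectors'' does not quite work. If $\psi_1,\ldots,\psi_t\in M_\gamma$ are constant and you only know they complement $L_\gamma(u_0)$ at one point, there is no reason the combined list stays linearly independent as $L_\gamma(u)$ moves; Gram--Schmidt then produces a zero vector. The fix is immediate and is what the paper does implicitly: take instead a continuous frame for a complementary \emph{subbundle}, e.g.\ the $h_u^\gamma$-orthogonal complement $L_\gamma(u)^{\perp}$, which is itself trivial over the compact contractible base. With that adjustment your proof goes through.
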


\begin{proof}
Fix an inner product, $(...,...)$, on $V$ such that $\tau$ is unitary. Then
there exists a positive definite Hermitian operator (with respect to $\left(
...,...\right) $), $A(u)$ such that $\left\langle v,w\right\rangle
_{u}=(A(u)v,w),v,w\in V$ and $A(u)$ is continuous (resp. real analytic) in $u
$. Note that 
\begin{equation*}
\tau(k)^{-1}A(u)\tau(k)=A(u),u\in X,k\in K. 
\end{equation*}
Set $S(u)=A(u)^{\frac{1}{2}}$ then $\left\langle v,w\right\rangle
_{u}=(S(u)v,S(u)w)$. Thus if $T(u)=S(u)^{-\frac{1}{2}}$ then $\tau
(k)T(u)=T(u)\tau(k),k\in K,u\longmapsto T(u)$ is continuous (resp. real
analytic) and 
\begin{equation*}
\left\langle T(u)v,T(u)w\right\rangle _{u}=\left( v,w\right) ,v,w\in V. 
\end{equation*}
Let $e_{1},...,e_{n}$ be an (ordered) orthonormal basis of $V$ with respect
to $(...,...)$ then $e_{1}(u)=T(u)e_{1},...,e_{n}(u)=T(u)e_{n}$ is an
orthonormal basis of $V$ with respect to $\left\langle ...,...\right\rangle
_{u}$ .If $\tau(k)e_{i}=\sum k_{ji}e_{j}$ then%
\begin{equation*}
\tau(k)e_{i}(u)=\tau(k)T(u)e_{i}=T(u)\tau(k)e_{i}=\sum k_{ji}T(u)e_{j}. 
\end{equation*}
To prove the second assertion note that $u\rightarrow\ker B(u)$ is a $K$%
--vector bundle over $X$ (see the lemma below). Since $X$ compact and
contractible the bundle is a trivial $K$--vector bundle (\cite{Atiyah} Lemma
1.6.4). Thus there is a representation $(\mu,Z)$ of $K$ and $u\longmapsto
L(u)\in Hom_{K}(Z,V)$ continuous such that $L(u)Z=\ker B(u)$ and $L(u)$ is
injective. Notice that $B(u):\ker B(u)^{\perp}\rightarrow W$ is a $K$%
--module isomorphism. Now pull back the inner product $\left\langle
_{...,...}\right\rangle _{u}$ to $Z$ using $L(u)$ getting a $K$--invariant
inner product, $(...,,,,)_{u}$, on $Z$ and push the inner product to $W$
getting \ a $K$--invariant inner product $(...,...)_{u}^{1}$ on $W$ Now
apply the first part of the lemma to get an orthonormal basis $%
f_{1}(u),...,f_{r}(u)$ of $Z$ with respect to $(...,...)_{u}$ and an
orthonormal basis $f_{r+1}(u),...,f_{n}(u)$ ($n=\dim V$) with respect to $%
(...,...)_{u}^{1}$ such that the matrices of the action of $K$ with respect
to each of these bases is constant. Take $e_{i}(u)=L(u)f_{i}(u)$ for $%
i=1,...,r$ and $e_{i}(u)=\left( B(u)_{|_{\ker B(u)^{\perp}}}\right)
^{-1}f_{i}(u)$ for $i=r+1,...,n$.
\end{proof}

\begin{lemma}
Let $V$ and $W$ be finite dimensional, continuous $K$--modules and assume
that for $x\in X$, $B(x)\in\mathrm{Hom}_{K}(V,W)$ is surjective and the map $%
x\mapsto B(x)$ is continuous. Then $x\mapsto\ker B(x)$ is a $K$--vector
bundle over $X$.
\end{lemma}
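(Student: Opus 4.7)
The plan is to construct, for each $x_0 \in X$, a continuous $K$-equivariant local trivialization of the putative bundle on a neighborhood of $x_0$. The key observation is that since $B(x)$ is surjective and $V,W$ are finite dimensional, $\dim \ker B(x) = \dim V - \dim W$ is constant in $x$.

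Fix $x_0 \in X$. Since $K$ is compact and $\ker B(x_0) \subset V$ is $K$-invariant, complete reducibility gives a $K$-invariant complement $V_1$ with $V = \ker B(x_0) \oplus V_1$. By construction $B(x_0)|_{V_1} : V_1 \to W$ is a $K$-module isomorphism. Since invertibility is an open condition, there is a neighborhood $U$ of $x_0$ such that $B(x)|_{V_1} : V_1 \to W$ is still an isomorphism for all $x \in U$, and $x \mapsto (B(x)|_{V_1})^{-1}$ is continuous on $U$ (it is the inverse of a continuous family of invertible operators on a finite dimensional space). Moreover, because $V_1$ is $K$-invariant and $B(x) \in \mathrm{Hom}_K(V,W)$, the restriction $B(x)|_{V_1}$ is $K$-equivariant, hence so is its inverse.

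Next, define for $x \in U$ the map
\begin{equation*}
T(x) : \ker B(x_0) \to V, \qquad T(x)(v) = v - (B(x)|_{V_1})^{-1}\bigl(B(x) v\bigr).
\end{equation*}
A direct computation shows $B(x) T(x) v = B(x)v - B(x)v = 0$, so $T(x)$ lands in $\ker B(x)$. $T(x)$ is $K$-equivariant as a composition of $K$-equivariant maps, continuous in $x$, and satisfies $T(x_0) = \mathrm{id}_{\ker B(x_0)}$ (since $B(x_0)$ vanishes on $\ker B(x_0)$). Since injectivity is open in the space of linear maps between fixed finite-dimensional spaces, $T(x)$ is injective on an open neighborhood $U' \subset U$ of $x_0$; because $\dim \ker B(x) = \dim \ker B(x_0)$, injectivity forces $T(x) : \ker B(x_0) \to \ker B(x)$ to be a $K$-module isomorphism for $x \in U'$.

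This yields a continuous $K$-equivariant local trivialization
\begin{equation*}
U' \times \ker B(x_0) \;\longrightarrow\; \bigsqcup_{x \in U'} \ker B(x), \qquad (x,v) \longmapsto T(x)v,
\end{equation*}
proving that $x \mapsto \ker B(x)$ is a $K$-vector bundle. The only step with any subtlety is verifying that $T(x)$ remains an isomorphism onto $\ker B(x)$ near $x_0$, which reduces to the standard openness of injectivity once one has the dimension-counting argument via surjectivity of $B(x)$; there is no real obstacle beyond bookkeeping.
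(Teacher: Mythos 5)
Your proof is correct and essentially identical to the paper's: the paper also picks a $K$-invariant complement $M$ to $\ker B(x_0)$, sets $S(u) = (B(u)|_M)^{-1}$, and shows $I - S(u)B(u)$ carries $\ker B(x_0)$ into $\ker B(u)$ and is an isomorphism near $x_0$ by openness of invertibility. Your map $T(x)$ is exactly $I - S(x)B(x)$, so the two arguments coincide (yours spells out the dimension count a bit more explicitly).
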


\begin{proof}
Let $x_{o}\in X$ and let $M\subset V$ be a $K$--invariant subspace of $V$
such that $B(x_{o})$ is a $K$--isomorphism of $M$ onto $W$. Then there
exists $U\subset X$ an open neighborhood of $x_{o}$ such that $B(u)_{|M}$ is
invertible for $u\in U$. Set $S(u)=\left( B(u)_{|Z}\right) ^{-1}$ on $B(u)V,$
for $u\in U$. If $v\in\ker B(x_{0})$ and if $u\in U$ then%
\begin{equation*}
B(u)v=B(u)S(u)B(u)v 
\end{equation*}
so%
\begin{equation*}
B(u)(I-S(u)B(u))v=0. 
\end{equation*}
Thus $I-S(u)B(u)$ maps $\ker B(x_{o})$ to $\ker B(u)$ for $u\in U$. This map
is the identity for $u=x_{o}$, so it is a $K$--isomorphism for $u\in
U_{1}\subset U$ with $U_{1}$ open in $U$.
\end{proof}

\begin{proposition}
\label{Hilb-access}If $(\sigma,V)$ is a continuous family of admissible $(%
\mathfrak{g},K)$--modules over a metric space $X$, if $(\mu,H)$ is an
smoothable (see Definition \ref{accessible}) continuous family of admissible
Hilbert representations of $G$ based on $X$ and if 
\begin{equation*}
T:(\sigma,V)\rightarrow(d\mu,H_{K}) 
\end{equation*}
is a continuous family of injective $(\mathfrak{g},K)$--module homomorpisms
then there exists $(\lambda,W)$ a smoothable continuous family of Hilbert
representations of $G$ based on $X$ such that $(d\lambda,W_{K}^{\infty})$
and $(\sigma,V)$ are isomorphic as continuous families and a continuous
family of injections of $(\lambda,W)$ into $(\mu,H)$.
\end{proposition}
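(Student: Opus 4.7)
The plan is to realize the required Hilbert family as $G$--invariant closed subspaces of $H$ and then trivialize to a single Hilbert space via a continuous unitary identification. For each $x\in X$ set $W_x=\overline{T(x)V}^{H}$. Since $T(x)$ is $K$--equivariant and each $V(\gamma)$ is finite dimensional, $W_x$ is a closed $K$--invariant subspace whose $K$--finite part is precisely $T(x)V$. The $(\mathfrak{g},K)$--invariance of $T(x)V\subset H_K^\infty$, combined with the smoothable hypothesis on $(\mu,H)$ giving polynomial control of $d\mu_x(u)$ by powers of $1+C_K$, implies by the standard globalization argument that $\mu_x(g)W_x\subset W_x$, so each $W_x$ is $G$--invariant.

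For each $\gamma\in\widehat K$ equip the fixed finite dimensional $K$--module $V(\gamma)$ with the $K$--invariant inner product $\langle v,w\rangle_{x,\gamma}=\langle T(x)v,T(x)w\rangle_{H}$, which depends continuously on $x$ because $T$ is a continuous family of homomorphisms. Applying Lemma \ref{orthhbases} to each $V(\gamma)$ separately produces orthonormal bases $\{e_{i,\gamma}(x)\}_{i=1}^{\dim V(\gamma)}$ of $V(\gamma)$ with respect to $\langle\cdot,\cdot\rangle_{x,\gamma}$, depending continuously on $x$ and with the matrix of the $K$--action independent of $x$. Then $\tilde e_{i,\gamma}(x)=T(x)e_{i,\gamma}(x)\in H(\gamma)$ is a continuously varying orthonormal basis of $T(x)V(\gamma)\subset W_x$.

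Let $W$ be an abstract separable Hilbert space with orthonormal basis $\{f_{i,\gamma}\}$ (indexed by those $\gamma$ with $V(\gamma)\neq 0$ and $1\le i\le\dim V(\gamma)$) carrying the $K$--representation determined by the common matrices from the previous step. Define $U_x\colon W\to H$ by $U_xf_{i,\gamma}=\tilde e_{i,\gamma}(x)$. Then $U_x$ is a $K$--equivariant unitary isomorphism onto $W_x$, and $x\mapsto U_xv$ is continuous for every $v\in W_K$; by the uniform bound $\|U_x\|=1$ together with density of $W_K$ in $W$, the map $x\mapsto U_xv$ is continuous for all $v\in W$. Set $\lambda_x(g)=U_x^{-1}\mu_x(g)U_x$. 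Then $\lambda_x|_K$ is independent of $x$, and the continuous family of $K$--isomorphisms $\phi_x\colon W_K\to V$ given by $f_{i,\gamma}\mapsto e_{i,\gamma}(x)$ intertwines $d\lambda_x$ with $\sigma_x$ because $T(x)$ intertwines $d\mu_x$ with $\sigma_x$, yielding the desired isomorphism of continuous families $(d\lambda_x,W_K^\infty)\cong(\sigma_x,V)$.

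Joint strong continuity of $(x,g,v)\mapsto\lambda_x(g)v$ follows from strong continuity of $U_x$ and $\mu$, together with Lemma \ref{loc-umod-growth} supplying a locally uniform operator bound on $\mu_x(g)$. The smoothable condition transfers via the $K$--equivariant unitary $U_x$: one has $\|d\lambda_x(u)v\|_W=\|d\mu_x(u)U_xv\|_H\le C_{\omega,u}\|d\mu(1+C_K)^{n_{\omega,u}}U_xv\|_H=C_{\omega,u}\|d\lambda(1+C_K)^{n_{\omega,u}}v\|_W$. The family $U_x$ itself furnishes the continuous family of injections of $(\lambda,W)$ into $(\mu,H)$. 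The principal technical point is the passage from the $K$--type by $K$--type continuous orthonormal bases of Lemma \ref{orthhbases} to a single strongly continuous unitary family $U_x$ across infinitely many $K$--types, handled by combining density of $W_K$ with the uniform boundedness $\|U_x\|=1$; a secondary subtlety is the initial verification that $W_x$ is $G$--invariant, where the smoothable hypothesis is essential.
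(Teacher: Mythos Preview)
Your proposal is correct and follows essentially the same line as the paper: pull back the Hilbert inner product along $T(x)$ to each $V(\gamma)$, invoke Lemma \ref{orthhbases} to obtain continuously varying $K$--adapted orthonormal bases, assemble these into a unitary identification, and conjugate the $G$--action through it. The paper packages this slightly differently---it works with the orthogonal projection $P(x)$ onto $\overline{T_xV}$ and the basepoint unitary $L(x_o,x):P(x)H\to P(x_o)H$, where you use an abstract model $W$ with the isometry family $U_x$---but the two descriptions are interchangeable, and your density-plus-uniform-bound argument for the strong continuity of $U_x$ is exactly the argument the paper gives for $P(x)$.

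One small correction: the $G$--invariance of $W_x=\overline{T(x)V}$ does not actually require the smoothable hypothesis. It follows from the standard fact that $K$--finite vectors in an admissible Hilbert representation are analytic vectors, so the closure of any $(\mathfrak{g},K)$--invariant subspace of $H_K$ is automatically $G$--invariant; compare the paper's proof of the companion Proposition \ref{hilb-surj}, which invokes precisely this. The smoothable hypothesis is used only in the final step, to transfer the $(1+C_K)$--bounds to $(\lambda,W)$, exactly as you do.
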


\begin{proof}
If $x\in X$ then set $\left\langle ...,...\right\rangle _{x}=T_{x}^{\ast
}\left\langle ...,...\right\rangle $. If $\gamma\in\widehat{K},x\in X$ let $%
e_{1}^{\gamma}(x),...,e_{n_{\gamma}}^{\gamma}(x)$ be the orthonormal basis
as in Lemma \ref{orthhbases} corresponding to $\left\langle
...,...\right\rangle _{x}$. Then $\{e_{i}^{\gamma}(x)\}$ is an orthonormal
basis of $V$. Set 
\begin{equation*}
f_{i}^{\gamma}(x)=T_{x}(e_{i}^{\gamma}(x)) 
\end{equation*}
then $\{f_{i}^{\gamma}(x)\}$ is an orthonormal basis of $T_{x}V$ for $x\in X$%
. If $v\in H$ then set 
\begin{equation*}
P_{\gamma}(x)v=\sum_{i=1}^{\nu_{\gamma}}\left\langle v,f_{i}^{\gamma
}(x)\right\rangle f_{i}^{\gamma}(x). 
\end{equation*}
The map 
\begin{equation*}
x\mapsto P_{\gamma}(x) 
\end{equation*}
is Strongly continuous from $X$ to $\mathrm{Hom}_{K}(H,H(\gamma))$(the
continuous $K$ \ homomorphisms). Define 
\begin{equation*}
P(x)v=\sum P_{\gamma}(x)v. 
\end{equation*}
Then $P(x)$ is the orthogonal projection of $H$ onto the closure of $T_{x}V$
in $H$. Thus in particular $\left\Vert P(x)\right\Vert =1$. We assert that 
\begin{equation*}
x\mapsto P(x) 
\end{equation*}
is strongly \ continuous from $X$ to the bounded operators on $H$. To this
end, let $v\in H$ \ \ be a unit vector and $x_{o}\in X$. Let $\varepsilon>0$
be given and let $F\subset\widehat{K}$ be such that 
\begin{equation*}
\left\Vert \sum_{\gamma\notin F}v(\gamma)\right\Vert <\frac{\varepsilon}{4}
\end{equation*}
then since 
\begin{equation*}
P(x)\sum_{\gamma\in F}v(\gamma)=\sum_{\gamma\in F}P_{\gamma}(x)v(\gamma) 
\end{equation*}
there exists an open neighborhood, $U$, of $x_{o}$ in $X$ such that%
\begin{equation*}
\left\Vert \left( P(x)-P(x_{o})\right) \sum_{\gamma\in F}v(\gamma
)\right\Vert <\frac{\varepsilon}{2}. 
\end{equation*}
Thus 
\begin{equation*}
\left\Vert \left( P(x)-P(x_{o})\right) v\right\Vert \leq\left\Vert \left(
P(x)-P(x_{o})\right) \sum_{\gamma\in F}v(\gamma)\right\Vert +\left\Vert
\left( P(x)-P(x_{o})\right) \sum_{\gamma\notin F}v(\gamma)\right\Vert 
\end{equation*}%
\begin{equation*}
\leq\left\Vert \left( P(x)-P(x_{o})\right) \sum_{\gamma\in F}v(\gamma
)\right\Vert +2\left\Vert \sum_{\gamma\notin F}v(\gamma)\right\Vert
<\varepsilon\text{.}
\end{equation*}

Let $\nu_{x}(g)$ be the action of $G$ on $P(x)H$. Define $%
L(x,y):P(y)H\rightarrow P(x)H$ 
\begin{equation*}
L(x,y)f_{i}^{\gamma}(y)=f_{i}^{\gamma}(x). 
\end{equation*}
Then $L(x,y)$ is a unitary operator and a $K$--module equivalence.
Furthermore,%
\begin{equation*}
x,y\mapsto L(x,y)P(y) 
\end{equation*}
is strongly continuous (use a slight modification of the argument for the
strong continuity of $P(x)$). Fix $x_{o}\in X$ and set $W=L(x_{o})H$. Set $%
\lambda_{x}(g)=L(x_{o},x)\nu_{x}(g)L(x,x_{o})$.To complete the proof we need
to show that $(\mu,W)$ is smoothable. Let $\omega$ be a compact subset of $X$
and $u\in U(\mathfrak{g})$ then there exist $C_{\omega}$ and $n_{\omega}$
such that if $v\in H^{\infty}$ then%
\begin{equation*}
\left\Vert d\mu_{x}(u)v\right\Vert \leq C_{\omega}\left\Vert
d\mu_{x}(1+C_{K})^{n_{\omega}}v\right\Vert . 
\end{equation*}
Now, if $v\in W^{\infty}$ then $v=P_{x_{o}}w$ with $w\in H^{\infty}$. So%
\begin{equation*}
\left\Vert d\lambda_{x}(u)v\right\Vert =\left\Vert L(x_{o},x)d\nu
_{x}(u)L(x,x_{o})P_{x_{0}}w\right\Vert 
\end{equation*}%
\begin{equation*}
=\left\Vert d\nu_{x}(u)L(x,x_{o})P_{x_{0}}w\right\Vert \leq C_{\omega
}\left\Vert d\nu_{x}(1+C_{K})^{n_{\omega}}L(x,x_{o})P_{x_{0}}w\right\Vert 
\end{equation*}%
\begin{equation*}
=C_{\omega}\left\Vert
L(x_{o},x)d\nu_{x}(1+C_{K})^{n_{\omega}}L(x,x_{o})P_{x_{0}}w\right\Vert 
\end{equation*}%
\begin{equation*}
=C_{\omega}\left\Vert
d\lambda_{x}(1+C_{K})^{n_{\omega}}P_{x_{0}}w\right\Vert 
\end{equation*}
\end{proof}

Similarly we can prove

\begin{proposition}
\label{hilb-surj}If $(\sigma,V)$ is a continuous family of admissible $(%
\mathfrak{g},K)$--modules over a compact contractible metric space $X$, $%
(\mu,H)$ is a continuous family of admissible Hilbert representations of $G$
based on $X$ and if 
\begin{equation*}
T:(d\mu,H_{K})\rightarrow(\sigma,V) 
\end{equation*}
is a continuous family of surjective $(\mathfrak{g},K)$--module homomorpisms
then there exists $(\lambda,W)$ an acceptable, continuous family of Hilbert
representations of $G$ based on $X$ such that $(d\lambda,W_{K}^{\infty})$
and $(\sigma,V)$ are isomorphic as continuous families and a continuous
family of surjections of $(\mu,H)$ into $(\lambda,W)$. Furthermore, if $%
(\mu,H)$ is smoothable the so is $(\lambda,W)$.
\end{proposition}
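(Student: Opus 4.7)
The plan is to dualize the argument of Proposition \ref{Hilb-access}, replacing submodules with quotients. For each $x \in X$ set $N_x = \ker T_x \subseteq H_K$ and let $K_x$ denote its closure in $H$. Since $N_x$ is a $(\mathfrak{g},K)$--submodule of $H_K$ and, by admissibility, $K$--finite vectors are analytic, a standard analytic-vector argument gives $\mu_x(G) N_x \subseteq K_x$, so $K_x$ is a closed $G$--invariant subspace. The quotient $H/K_x$ thus inherits a continuous Hilbert representation $\bar\mu_x$, which via the isometry $H/K_x \to K_x^{\perp}$ given by orthogonal projection becomes the representation $\nu_x(g) v = Q(x) \mu_x(g) v$ on $K_x^{\perp}$, where $Q(x)$ denotes orthogonal projection onto $K_x^{\perp}$. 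Because $T_x$ is $K$--equivariant, $K_x \cap H(\gamma) = \ker T_x|_{H(\gamma)}$, so $T_x$ restricts to a $K$--isomorphism $K_x^{\perp} \cap H_K \to V$.

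To globalize continuously, I apply the ``Furthermore'' clause of Lemma \ref{orthhbases} $K$--type by $K$--type: the map $T_x|_{H(\gamma)} : H(\gamma) \to V(\gamma)$ is surjective and continuous in $x$ between fixed finite dimensional $K$--modules, and $X$ is compact and contractible, so there is an orthonormal frame $e_1^{\gamma}(x), \dots, e_{n_\gamma}^{\gamma}(x)$ of $H(\gamma)$, continuous in $x$, whose first $r_\gamma$ vectors span $\ker T_x|_{H(\gamma)}$ and on which $K$ acts by an $x$--independent matrix. From these frames I build the strongly continuous family of projections $Q(x) = \sum_{\gamma} Q_{\gamma}(x)$ (strong continuity follows by the same finite $K$--type truncation argument used in Proposition \ref{Hilb-access}) and the unitary $K$--equivalences $L(x,y) : K_y^{\perp} \to K_x^{\perp}$ defined by $L(x,y) e_i^{\gamma}(y) = e_i^{\gamma}(x)$ for $i > r_\gamma$. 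Fix $x_0 \in X$, set $W = K_{x_0}^{\perp}$, and define $\lambda_x(g) = L(x_0, x) \nu_x(g) L(x, x_0)$ on $W$. The map $S_x : H \to W$ given by $S_x(v) = L(x_0, x) Q(x) v$ is strongly continuous, fiberwise surjective, and $G$--equivariant: for $v \in H$, the $G$--invariance of $K_x$ gives $Q(x) \mu_x(g)(I - Q(x)) v = 0$, whence $Q(x) \mu_x(g) v = Q(x) \mu_x(g) Q(x) v$. The identification of $(d\lambda_x, W_K^{\infty})$ with $(\sigma_x, V)$ is effected by $T_x \circ L(x, x_0)|_{W_K}$, which is a $(\mathfrak{g},K)$--isomorphism by construction and depends continuously on $x$ because it is continuous in the chosen frames.

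The remaining point is smoothability. Assume $(\mu, H)$ is smoothable, fix a compact $\omega \subset X$ and $u \in U(\mathfrak{g})$. For a $K$--finite vector $w \in W_K$, the vector $L(x, x_0) w$ lies in $K_x^{\perp} \cap H_K \subseteq H^{\infty}$, and the $G$--invariance of $K_x$ together with the definition of $\nu_x$ give $d\nu_x(y) v = Q(x) d\mu_x(y) v$ for $v \in K_x^{\perp} \cap H^{\infty}$ and $y \in U(\mathfrak{g})$; since $C_K$ commutes with $Q(x)$ and acts through the fixed $K$--action, one obtains the chain $\|d\lambda_x(u) w\| = \|d\nu_x(u) L(x,x_0) w\| \leq \|d\mu_x(u) L(x,x_0) w\| \leq C_{\omega,u} \|d\mu_x(1 + C_K)^{n_{\omega,u}} L(x,x_0) w\| = C_{\omega,u} \|d\lambda_x(1 + C_K)^{n_{\omega,u}} w\|$. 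The main obstacle lies in extending this estimate from $W_K$ to all of $W^{\infty}$: unlike the submodule situation of Proposition \ref{Hilb-access}, smooth vectors for the quotient representation need not sit inside $H^{\infty}$, so the naive lift argument is unavailable. I overcome this by density, using that $W_K$ is dense in $W^{\infty}$ in the $C^{\infty}$--topology generated by the seminorms $v \mapsto \|d\lambda_x(y) v\|$ and that both sides of the desired inequality are continuous in these seminorms.
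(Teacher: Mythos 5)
Your proof is correct and follows essentially the same route as the paper's: adapted orthonormal frames via Lemma \ref{orthhbases}, the strongly continuous projections onto the orthogonal complement of $\overline{\ker T_x}$, $G$--invariance of that closure via analyticity of $K$--finite vectors, the unitary transfers $L(x_0,x)$, and the orthogonal-decomposition Casimir estimate for smoothability. The one place you diverge is in closing the smoothability step: you prove the estimate on $W_K$ and extend by density, flagging as ``the main obstacle'' that smooth vectors for the quotient might not lift to $H^\infty$. In fact they do lift: the $C^\infty$--functor on Banach $G$--representations is exact (this is the Warner citation already invoked in the proof of Proposition \ref{C-K}), so $Q(x)H^\infty$ is precisely the space of $\nu_x$--smooth vectors in $Q(x)H$, and the paper's estimate, stated for $v \in P_x H^\infty$, already covers all of $W^\infty$ after transport by $L(x_0,x)$. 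Your density workaround (rapid decay of $K$--isotypic components, which follows from Proposition \ref{C-K} and admissibility, gives $W_K$ dense in $W^\infty$ in the $C^\infty$--topology, and both sides of the inequality are $C^\infty$--continuous seminorms) is nevertheless also sound, just slightly more roundabout.
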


\begin{proof}
The proof follows the same lines as the previous theorem. Let for each $x\in
X$, 
\begin{equation*}
B_{\gamma}(x)=T_{x}|_{H(\gamma)}. 
\end{equation*}
Let $r_{\gamma}=\dim V(\gamma),m_{\gamma}=\dim H(\gamma)$. Then Lemma \ref%
{orthhbases} implies that for each $x\in X$ there exists an orthonormal
basis of $H(\gamma),\{e_{i}^{\gamma}(x)\}$ with respect to the inner
product, $\left\langle ...,...\right\rangle $ on $H$ such that $%
\{e_{i}^{\gamma }(x)\}_{i>r_{\gamma}}$ is a basis of $\ker B_{\gamma}(x)$
and $x\mapsto$ $e_{\gamma}^{i}(x)$ is continuous. Let $f_{i}^{\gamma}(x)=B_{%
\gamma}(x)e_{i}^{\gamma}(x)$ and for each $x\in X$ define an inner product, $%
\left\langle ...,...\right\rangle _{x}$ on $V$ by declaring that $\left\{
f_{i}^{\gamma}(x)\right\} $ is an orthonormal basis. Set 
\begin{equation*}
P_{\gamma}(v)=\sum_{i=1}^{r_{\gamma}}\left\langle v,e_{i}^{\gamma
}(x)\right\rangle e_{i}^{\gamma}(x) 
\end{equation*}
and $P(x)=\sum P_{\gamma}(x)$. Essentially the same argument as in the
preceding theorem shows that map $x\mapsto P(x)$ is strongly continuous.
Also, $T_{x}:P(x)H_{K}\rightarrow V$ is unitary relative to $\left\langle
...,...\right\rangle _{x}$ and an equivalence of representations of $K$. Let 
$H_{1,x}$ be the closure in $H$ of $\ker T_{x}$ for $x\in X$. Then, as a
Hilbert space, $H/H_{1,x}=P(x)H$. Since $\ker T_{x}$ consists of analytic
vectors $H_{1,x}$ is $G$--invariant, This defines a Hilbert representation, $%
\gamma_{x}$, on $P(x)H$. Which in turn defines a Hilbert representation, $%
\nu_{x}$, of $G$ on the Hilbert space completion of $V$, $Z_{x}$. Let $%
L(x,y):Z_{y}\rightarrow Z_{x}$ be defined by $L(x,y)f_{i}^{%
\gamma}(y)=f_{i}^{\gamma}(x)$. Then $L(x,y)$ defines a unitary $K$%
--isomorphism of $(\nu_{y}|_{K},Z_{y})$ with $(\nu_{x}|_{K},Z_{x})$. Fix $%
x_{o}$ in $X$ and let $W=Z_{x_{o}}$ and $\lambda_{x}(g)=L(x_{o},x)%
\nu_{x}(g)L(x,x_{o}).$ Then, as in the above theorem, we have defined a
Hilbert family globalizing $(\sigma.V)$.

We now assume that $(\mu,H)$ is smoothable. If $v\in P_{x}H^{\infty}$ and
then%
\begin{equation*}
d\mu_{x}(u)v=d\gamma_{x}(u)v+(I-P_{x})d\mu_{x}(u)v. 
\end{equation*}
Thus if $\omega$ is a compact subset of $X$ then%
\begin{equation*}
\left\Vert d\gamma_{x}(u)v\right\Vert \leq\left\Vert d\mu_{x}(u)v\right\Vert 
\end{equation*}%
\begin{equation*}
\leq C_{u,\omega}\left\Vert d\mu_{x}(1+C_{K})^{l}v\right\Vert =C_{u,\omega
}\left\Vert d\gamma_{x}(1+C_{K})^{l}v\right\Vert . 
\end{equation*}
We reinterpret the family $(\lambda,W)$. Let $M(x,y):P_{y}H\rightarrow P_{x}H
$ be given by 
\begin{equation*}
M(x,y)e_{i}^{\gamma}(y)=e_{i}^{\gamma}(x). 
\end{equation*}
Fix $x_{o}\in X$. Then the family can be defined as $%
\delta_{x}(g)=M(x_{o},x)\gamma_{x}(g)M(x,x_{o})v$ for $v\in P_{x_{o}}H.$
Setting $U=P_{x_{o}}H$ then $(\delta,U)$ is an isomorphic continuous family
to $(\lambda,W)$. We show that this family is smoothable let $u\in U(%
\mathfrak{g})$. Then if $v\in U$ and $\omega$ is a compact subset of $X$
then 
\begin{equation*}
\left\Vert d\delta_{x}(u)v\right\Vert =\left\Vert M(x_{o},x)d\gamma
_{x}(u)M(x,x_{o})v\right\Vert =\left\Vert
d\gamma_{x}(u)M(x,x_{o})v\right\Vert 
\end{equation*}%
\begin{equation*}
\leq C_{\upsilon,\omega}\left\Vert
d\gamma_{x}(1+C_{K})^{l}M(x,x_{o})v\right\Vert =C_{u,\omega}\left\Vert
M(x_{o},x)d\gamma_{x}(1+C_{K})^{l}M(x,x_{o})v\right\Vert 
\end{equation*}%
\begin{equation*}
=C_{u,\omega}\left\Vert d\delta_{x}((1+C_{K})^{l}v\right\Vert . 
\end{equation*}
\end{proof}

\end{document}